\newtheorem{thm}[subsection]{Theorem}%[section]
\newtheorem{lem}[subsection]{Lemma}
\newtheorem{prop}[subsection]{Proposition}
\newtheorem{cor}[subsection]{Corollary}
\newtheorem*{conj*}{Conjecture}
\theoremstyle{definition}
\newtheorem*{example*}{Example}
\theoremstyle{remark}
\newtheorem{remark}[subsection]{Remark}
\numberwithin{equation}{subsection}
\newcommand{\N}{{\mathbb N}}
\newcommand{\Z}{{\mathbb Z}}
\newcommand{\Q}{{\mathbb Q}}
\newcommand{\End}{\operatorname{End}}
\newcommand{\Hom}{\operatorname{Hom}}
\newcommand{\SL}{\mathsf{SL}}
\newcommand{\gl}{\mathfrak{gl}}
\newcommand{\so}{\mathfrak{so}}
\renewcommand{\sp}{\mathfrak{sp}}
\newcommand{\g}{\mathfrak{g}}
\newcommand{\h}{\mathfrak{h}}
\newcommand{\divided}[2]{#1^{(#2)}}
\newcommand{\sqbinom}[2]{\begin{bmatrix}#1\\#2\end{bmatrix}}
\newcommand{\UU}{\mathbf{U}}
\newcommand{\Sch}{\mathbf{S}}
\newcommand{\SchP}{\mathbf{\widetilde{S}}}
\newcommand{\T}{\mathbf{T}}
\newcommand{\A}{\mathcal{A}}
\newcommand{\bil}[2]{\langle #1, #2 \rangle}
\newcommand{\tilK}{\widetilde{K}}
\newcommand{\lp}{\lambda^\prime}
\newcommand{\lpp}{\lambda^{\prime\prime}}
\newcommand{\al}{\alpha}
\newcommand{\ideal}{\Lambda}
\newcommand{\F}{\mathbb{F}}
\newcommand{\Aff}{\mathbb{A}}
\newcommand{\U}{\mathfrak{U}}
\renewcommand{\ge}{\geqslant}
\renewcommand{\le}{\leqslant}
\newcommand{\pt}[1]{\item{\rm(#1)}}
\begin{document}
\title[Generalized $q$-Schur algebras]
{On the defining relations for \\
generalized $q$-Schur algebras}

% author 1 information
\author[Doty]{Stephen Doty}
\address{Department of Mathematics and Statistics\\
Loyola University Chi\-cago\\
Chicago, Illinois 60626 USA}
%\curraddr{}
%\thanks{}
\email{doty@math.luc.edu}

% author 2 information
\author[Giaquinto]{Anthony Giaquinto}
\address{Department of Mathematics and Statistics\\
Loyola University Chi\-cago\\
Chicago, Illinois 60626 USA}
%\curraddr{}
\email{tonyg@math.luc.edu}

% author 3 information
\author[Sullivan]{John Sullivan}
\address{Department of Mathematics, University of Washington, Seattle,
Washington 98195-4350 USA}
\email{sullivan@math.washington.edu}

\date{Revised 27 April 2008}
% Use this \subjclass if you are using amsart version 2.0 (December 1999).
%\subjclass[2000]{17B37, 16W35, 81R50}
% Use this one if you are using an older version of amsart.
\subjclass[2000]{17B37, 16W35, 81R50}
\keywords{q-Schur algebras, generalized Schur algebras, quantized
enveloping algebras}

\begin{abstract}
We show that the defining relations needed to describe a generalized
$q$-Schur algebra as a quotient of a quantized enveloping algebra are
determined completely by the defining ideal of a certain finite affine
variety, the points of which correspond bijectively to the set of
weights. This explains, unifies, and extends previous results.
\end{abstract}
\maketitle

\parskip=2pt
\allowdisplaybreaks

\section*{Introduction}\noindent
Consider a reductive Lie algebra $\g$ and an associated quantized
enveloping algebra $\UU$ (over $\Q(v)$, $v$ an indeterminate)
determined by a given root datum $(X,\{\alpha_i\}, Y, \{h_i\})$ of
finite type. (See \S\ref{sec:notation} for basic notation and standard
terminology.) In some sense $\UU$ is a deformation of the universal
enveloping algebra $\U = \U(\g)$ (taken over $\Q$).  A generalized
$q$-Schur algebra $\Sch(\pi)$ determined by a finite saturated set
$\pi \subset X^+$ is the quotient $\UU/\Lambda$ where $\Lambda$
consists of all $u \in \UU$ acting as zero on all finite dimensional
$\UU$-modules admitting a weight space decomposition indexed by $W\pi$
($W$ is the Weyl group). Thus one truncates the category of
$\UU$-modules to the full subcategory $\mathcal{C}[\pi]$ consisting of
those modules admitting such a weight space decomposition, and the
module category of the finite dimensional algebra $\Sch(\pi)$ is
precisely the category $\mathcal{C}[\pi]$.

We consider the general problem of describing in some explicit way the
extra relations needed to define $\Sch(\pi)$ as a quotient of $\UU$;
that is, we seek to find a reasonable set of generators of the ideal
$\Lambda$ in terms of the usual generators of $\UU$.  Then $\Sch(\pi)$
is the algebra given by the usual generators of $\UU$ with its usual
defining relations along with the extra relations generating
$\Lambda$.

We also consider the analogous question in the limit at $v=1$, where
$\Sch(\pi)$ is replaced by a generalized Schur algebra $S(\pi)$ and
$\UU$ is replaced by $\U$. The generalized Schur algebra $S(\pi)$ was
introduced by Donkin \cite{SA1} as the quotient $\U/\Lambda$ where
again $\Lambda$ consists of all $u \in \U$ which act as zero on all
finite dimensional $\U$-modules admitting a weight space decomposition
indexed by the set $W\pi$.

Our main results are Theorems \ref{thm:main} and
\ref{thm:classical:main} and Corollaries \ref{cor:main} and
\ref{cor:classical:main}. The theorems state that the extra relations
may be found simply by locating a set of generators for the vanishing
ideal of a certain discrete affine variety corresponding to the set
$W\pi$ of weights ($W$ is the Weyl group), and the corollaries give
explicit formulas for a set of generators of that ideal. More
precisely, we choose an arbitrary $\Z$-basis $H_1, \dots, H_n$ of the
abelian group $Y$. In the classical case, the discrete set of points
is
\[
\{ (\bil{H_1}{\lambda}, \dots, \bil{H_n}{\lambda})
\mid \lambda \in W\pi \}
\]
and the vanishing ideal is the ideal of all elements of the polynomial
algebra $\Q[H_1, \dots, H_n] \simeq \U^0$ vanishing on this set of
points in $\Aff_\Q^n$, where we regard the $H_a$ for $a=1,\dots, n$ as
coordinate functions on $\Aff_\Q^n$. Unlike the quantized case, here
we can take $H_1, \dots, H_n$ to be any $\Q$-basis of $\h = \Q
\otimes_\Z Y \subset \g$; see \ref{def:Ualt}. This is a major
difference between the generalized Schur algebras and their
$q$-analogues. 

In the general (quantized) case, the discrete set of points is
\[
\{ (v^{\bil{H_1}{\lambda}}, \dots,
v^{\bil{H_n}{\lambda}}) \mid \lambda \in W\pi \}
\]
and the vanishing ideal is the ideal of all elements of the polynomial
algebra $\Q[K_{H_1}, \dots, K_{H_n}] \subset \UU^0$ vanishing on this
set of points in $\Aff_{\Q(v)}^n$, where we regard the $K_{H_a}$ for
$a=1,\dots, n$ as coordinate functions on $\Aff_\Q^n$.  Then the
defining ideal of the generalized Schur algebra determined by $\pi$ is
the ideal of $\U$ (resp., $\UU$) generated by the vanishing ideal.

Note that it immediately follows from these results that the extra
defining relations needed to define a generalized Schur algebra as a
quotient of $\UU$ (resp., $\U$) may be found in the zero part of $\UU$
(resp., $\U$).

The compelling generality and simplicity of this description lies in
the wealth of examples that are brought together under its
jurisdiction. We easily recover previous results from \cite{DG:PSA},
\cite{DD}, \cite{DGS} as special cases of the main result; see
\S\ref{sec:ex}. The reader is advised to start with the main results,
followed by the examples, before tracing through the proofs of the
main results, which are somewhat technical.

In order to prove our results, we rely on the presentation (in terms
of idempotents in place of Cartan generators) of generalized Schur
algebras given in \cite{PGSA}. It was necessary to extend the results
of \cite{PGSA} to our slightly more general setup, the details of
which are contained in Sections \ref{app:A} and \ref{app:B}.

One motivation for these results lies in a desire to investigate new
instances of Schur-Weyl duality. Let $M$ be a finite dimensional
module for $\UU$ (or $\U$) which admits a weight space decomposition;
i.e., an object of $\mathcal{C}$ (see \ref{modules}). Write $\Pi^+(M)$
for the set of dominant weights of $M$. The set $\pi = \Pi^+(M)$ is
necessarily saturated, in the sense of \ref{gsa}, so it determines a
generalized Schur algebra $\Sch(\pi)$. We say that the {\em module}
$M$ is saturated if the set of highest weights of its composition
factors coincides with $\Pi^+(M)$.  If $M$ is a saturated module, then
we may identify the generalized Schur algebra $\Sch(\pi)$ with the
image $\rho(\UU)$ of the representation $\rho: \UU \to \End(M)$
affording the $\UU$-module structure on $M$. This condition is
necessary for the functor $\Hom_{\Sch(\pi)}(M,-)$ to provide a strong
connection between the representation theories of $\UU$ and its
centralizer algebra $\End_{\UU}(M) = \End_{\Sch(\pi)}(M)$.  

In type $A$ if one takes $M$ to be a tensor power of the natural
module, then $\Sch(\pi)$ is the $q$-Schur algebra and $S(\pi)$ the
classical Schur algebra, see \ref{ex:A:classical}.  In type $B$ the
tensor powers of the natural module are not generally saturated
modules, so in \ref{ex:B_n:Spin} we treat the generalized Schur
algebra coming from tensor powers of the spin representation, which
are in fact always saturated modules (see Appendix~\ref{app:spin}).

It is worth mentioning here an interesting conjecture relating to the
question of saturation of tensor powers.

\begin{conj*}
  Let $V$ be a finite dimensional irreducible $\UU$-module which is an
  object of $\mathcal{C}$ (see \ref{modules}).  Then the module
  $V^{\otimes r}$ is saturated for all $r\ge 0$ if and only if $V$ is
  minuscule.
\end{conj*}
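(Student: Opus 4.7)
The plan is to prove the two implications separately. The forward direction is essentially immediate: specializing the saturation hypothesis to $r=1$, saturation of the irreducible $V$ forces $\Pi^+(V)=\{\varpi\}$ (since the unique composition factor has highest weight $\varpi$), and that is precisely the classical characterization of minuscule irreducibles (equivalently, $\Pi(V)=W\varpi$).

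The reverse direction I would prove by induction on $r$, the base cases $r=0,1$ being trivial. Fix $V=L(\varpi)$ minuscule, so $\langle h_j, w\varpi\rangle\in\{-1,0,1\}$ for all $w\in W$ and simple coroots $h_j$, each weight of $V$ has multiplicity one, and by complete reducibility in $\catC$ the induction hypothesis gives $V^{\otimes r}=\bigoplus_{\lambda\in\Pi^+(V^{\otimes r})}L(\lambda)^{m_\lambda}$. The engine of the step from $r$ to $r+1$ is the minuscule Pieri rule
\begin{equation*}
L(\varpi)\otimes L(\mu) \;=\; \bigoplus_{\tau\in W\varpi,\ \mu+\tau\in X^+} L(\mu+\tau)\qquad(\mu\in X^+),
\end{equation*}
which I would verify directly from the Weyl character formula: writing $\ch L(\varpi)\cdot\ch L(\mu)=\sum_{\sigma\in W\varpi}\chi_\sigma$ with $\chi_\sigma=\sum_w(-1)^{\ell(w)}e^{w(\mu+\sigma+\rho)}/\Delta$, the minuscule bound forces $\langle h_j,\mu+\sigma+\rho\rangle=0$ precisely when $\mu+\sigma$ fails to be dominant (since $\langle h_j,\mu+\sigma\rangle<0$ requires $\langle h_j,\mu\rangle=0$ and $\langle h_j,\sigma\rangle=-1$), making those $\chi_\sigma$ vanish. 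Combining the two decompositions identifies the composition-factor highest weights of $V^{\otimes(r+1)}$ with $\{\lambda+\tau:\lambda\in\Pi^+(V^{\otimes r}),\ \tau\in W\varpi,\ \lambda+\tau\in X^+\}$, so the induction step reduces to showing that each $\nu\in\Pi^+(V^{\otimes(r+1)})$ is of this form.

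For this combinatorial step, the tensor decomposition $(V\otimes V^{\otimes r})_\nu=\bigoplus_{\tau'\in W\varpi}(V^{\otimes r})_{\nu-\tau'}$ produces some $\tau'\in W\varpi$ with $\nu-\tau'\in\Pi(V^{\otimes r})$, but $\nu-\tau'$ need not be dominant. To correct this I would exploit the stabilizer $W_\nu=\langle s_j:\langle h_j,\nu\rangle=0\rangle$, which is parabolic because $\nu$ is dominant. Every $W_\nu$-orbit on $W\varpi$ meets the $W_\nu$-antidominant chamber, so there exists $w\in W_\nu$ with $\tau:=w\tau'$ satisfying $\langle h_j,\tau\rangle\le 0$ for all $j$ with $s_j\in W_\nu$. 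Setting $\lambda:=\nu-\tau=w(\nu-\tau')$, the $W$-stability of $\Pi(V^{\otimes r})$ (together with $w\nu=\nu$) keeps $\lambda$ a weight of $V^{\otimes r}$; dominance of $\lambda$ then follows by a coordinate check, namely at coroots $h_j$ annihilating $\nu$ from $W_\nu$-antidominance of $\tau$, and at the remaining coroots from the minuscule bound $\langle h_j,\tau\rangle\le 1\le\langle h_j,\nu\rangle$.

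The main obstacle is the stabilizer step above, and the crucial input is the minuscule pairing bound $|\langle h_j,w\varpi\rangle|\le 1$: it is what makes the Pieri rule multiplicity free and what guarantees that a single unit of anti-dominance in $\tau$ can always be absorbed by $\nu$. Any attempt to extend the argument beyond the minuscule case breaks exactly here, consistent with the conjectured equivalence.
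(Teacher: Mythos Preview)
Your proposal is correct and goes beyond the paper: this statement is left there as a \emph{Conjecture}, with the forward direction noted as easy (via $r=1$, exactly as you argue) but the reverse direction only recorded as ``known'' for the natural module in types $A$, $C$, $D$ and the spin module in type $B$---the last of these being the subject of the entire Appendix, proved by an explicit case analysis of $S\otimes S$ and the particular saturated sets $\bigcup_{j\le m}W_j$. Your argument, by contrast, is uniform across all minuscule $\varpi$: the minuscule Pieri rule (whose Weyl-character proof you sketch correctly, and which applies here because characters of simple $\UU$-modules in $\catC$ coincide with their classical counterparts) handles the decomposition step, and the $W_\nu$-antidominant replacement $\tau'\mapsto\tau=w\tau'$ is exactly the device needed to write each $\nu\in\Pi^+(V^{\otimes(r+1)})$ as $\lambda+\tau$ with $\lambda\in\Pi^+(V^{\otimes r})$, both steps using only the bound $|\langle h_j,\tau\rangle|\le 1$. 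The paper's case-by-case treatment buys concreteness and ties directly to the explicit presentations of the relevant Schur algebras; your argument buys generality and in fact \emph{proves} what the authors pose as open, so you should flag this and look for whether it is already recorded elsewhere in the literature.
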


\noindent
There is some evidence for this conjecture. It is easy to see that in
order for $V^{\otimes r}$ to be saturated for all $r$ it is necessary
that $V$ be minuscule. Taking $r=1$ we see that $V$ itself must be
saturated, which forces it to be minuscule. Moreover, the converse is
known to hold in case $V$ is the natural module in types $A$, $C$,
$D$, as well as when $V$ is the spin module in type $B$. (These are
all minuscule.)

\section{Basic Notation}\label{sec:notation}\noindent
Notation introduced here will be used throughout.  Our conventions are
similar to those of Lusztig's book \cite{Lusztig}.

\subsection{Cartan datum}
Let a Cartan datum be given. By definition, a Cartan datum consists of
a finite set $I$ and a symmetric bilinear form $(\ ,\ )$ on the free
abelian group $\Z[I]$ taking values in $\Z$, such that:

(a) $(i,i) \in \{2,4,6,\dots \}$ for any $i$ in $I$.

(b) $2(i,j)/(i,i) \in \{0, -1, -2, \dots \}$ for any $i \ne j$ in $I$.

\subsection{Weyl group}
The Weyl group $W$ associated to the given Cartan datum is defined as
follows. For any $i \ne j$ in $I$ such that $(i,i)(j,j)-(i,j)^2 > 0$
let $h(i,j) \in \{2,3,4,\dots \}$ be determined by the equality
\[
\cos^2 \frac{\pi}{h(i,j)} = \frac{(i,j)}{(i,i)}\,\frac{(j,i)}{(j,j)}.
\]
One has $h(i,j) = h(j,i) = 2,3,4$ or $6$ according to whether
$\frac{2(i,j)}{(i,i)}\, \frac{2(j,i)}{(j,j)}$ is 0, 1, 2, or 3. For
any $i \ne j$ such that $(i,i)(j,j)-(i,j)^2 \le 0$ one sets $h(i,j) =
\infty$.

The braid group is the group given by generators $s_i$ ($i \in I$)
satisfying for each $i\ne j$ such that $h(i,j) < \infty$ the relations
\[
 s_i s_j \cdots = s_j s_i \cdots
\]
where on both sides of the equality we alternate between the factors
$s_i$ and $s_j$ in the given order until there are $h(i,j)$ factors on
each side. The Weyl group $W$ is the group on the same set of
generators, satisfying the same relations, along with the relations
$s_i^2 = 1$ for all $i \in I$. It is naturally a quotient of the braid
group.

\subsection{Root datum}\label{root}
A root datum associated to the given Cartan datum consists of two
finitely generated free abelian groups $X$, $Y$ and a perfect bilinear
pairing $\bil{\ }{\ }: Y \times X \to \Z$ along with embeddings $I \to
Y$ ($i \mapsto h_i$) and $I \to X$ ($i \mapsto \alpha_i$) such that
\[
   \bil{h_i}{\alpha_j} = 2\frac{(i,j)}{(i,i)}
\]
for all $i,j$ in $I$.

The image of the embedding $I \to Y$ is the set $\{h_i\}$ of simple
coroots and the image of the embedding $I \to X$ is the set
$\{\alpha_i\}$ of simple roots.

\subsection{}
The assumptions on the root datum imply that

(a) $\bil{h_i}{\alpha_i} = 2$ for all $i\in I$;

(b) $\bil{h_i}{\alpha_j} \in \{0, -1, -2, \dots \}$ for all $i \ne j
\in I$.

\noindent
In other words, the matrix $(\, \bil{h_i}{\alpha_j} \,)$ indexed by $I
\times I$ is a symmetrizable generalized Cartan matrix.

For each $i \in I$ we set $d_i = (i,i)/2$ (note that $d_i \in
\{1,2,3\}$). Then the matrix $(\,d_i \bil{h_i}{\alpha_j} \,)$ indexed
by $I \times I$ is symmetric.

\subsection{}
Let $v$ be an indeterminate. Set $v_i = v^{d_i}$ for each $i \in
I$. More generally, given any rational function $P \in \Q(v)$ we let
$P_i$ denote the rational function obtained from $P$ by replacing each
occurrence of $v$ by $v_i=v^{d_i}$.

Set $\A = \Z[v,v^{-1}]$. For $a\in \Z$, $t\in \N$ we set
\[
   \sqbinom{a}{t} = \prod_{s=1}^t \frac{v^{a-s+1} - v^{-a+s-1}} {v^s -
   v^{-s}}.
\]
A priori this is an element of $\Q(v)$, but actually it lies in $\A$
(see \cite[1.3.1(d)]{Lusztig}). We set
\[
  [n] = \sqbinom{n}{1} = \frac{v^n - v^{-n}}{v - v^{-1}}
\qquad (n \in \Z)
\]
and
\[
   [n]^! = [1]\cdots [n-1]\,[n] \qquad (n\in \N).
\]
Then it follows that
\[
\sqbinom{a}{t} = \frac{[a]^!}{[t]^!\,[a-t]^!}
\qquad \text{for all $0 \le t \le a$.}
\]

\subsection{} \label{def:UU}
Let $\UU$ be the quantized enveloping algebra associated to the given
root datum $(X,\{\alpha_i\}, Y,\{h_i\})$. Thus, $\UU$ is the
associative $\Q(v)$-algebra with 1 given by the generators
\[
  E_i \quad (i\in I), \qquad F_i \quad (i\in I), \qquad
  K_h \quad (h \in Y)
\]
and satisfying the relations

(a) $K_0 = 1$, $K_h K_{h'} = K_{h + h'}$ for all $h,h' \in Y$;

(b) $K_h E_i = v^{\bil{h}{\alpha_i}} E_i K_h$ for all $i \in I$,
$h \in Y$;

(c) $K_h F_i = v^{-\bil{h}{\alpha_i}} F_i K_h$ for all $i \in
I$, $h \in Y$;

(d) $E_i F_j - F_j E_i = \delta_{ij} \dfrac{\tilK_i -
\tilK_{-i}}{v_i-v_i^{-1}}$ for any $i,j \in I$;

(e) $\displaystyle \sum_{s+s'=1-\bil{h_i}{\alpha_j}} (-1)^{s'}
\divided{E_i}{s}E_j \divided{E_i}{s'} = 0$ for all $i \ne j$;

(f) $\displaystyle \sum_{s+s'=1-\bil{h_i}{\alpha_j}} (-1)^{s'}
\divided{F_i}{s}F_j \divided{F_i}{s'} = 0$ for all $i \ne j$.

\noindent
In (d) above we set $\tilK_i = K_{d_i h_i} = (K_{h_i})^{d_i}$,
$\tilK_{-i} = K_{-d_i h_i} = (K_{h_i})^{-d_i}$ and in (e), (f) above
we set $\divided{E_i}{s} = (E_i)^s/([s]_i^!)$, $\divided{F_i}{s} =
(F_i)^s/([s]_i^!)$.

The algebra $\UU$ admits a Hopf algebra structure, with coproduct
\[
\Delta: \UU \to \UU\otimes \UU
\]
given by the algebra homomorphism satisfying

$\Delta K_h = K_h \otimes K_h$;

$\Delta E_i = E_i\otimes \tilK_i^{-1} + 1 \otimes E_i$; \quad
$\Delta F_i = F_i\otimes 1 + \tilK_i \otimes E_i$

\noindent
for each $h\in Y$, $i \in I$. Using $\Delta$ one defines a
$\UU$-module structure on the tensor product of two given
$\UU$-modules in the usual manner.

\subsection{The classical case} \label{U:classical}
Setting $K_h = v^h$ and letting $v$ tend to 1 (which forces $K_h$ to
approach 1), the algebra $\UU$ becomes the universal enveloping
algebra $\U=\U(\g)$ (over $\Q$) of the corresponding Kac-Moody Lie
algebra $\g$ generated by $\mathfrak{h}=\Q \otimes_\Z Y$ and $\{e_i,
f_i: i \in I\}$, with the defining relations

(a) $[h,h']=0$;

(b) $[h, e_i] = \bil{h}{\alpha_i} e_i$;

(c) $[h, f_i] = -\bil{h}{\alpha_i} f_i$;

(d) $[e_i, f_j] = \delta_{ij} h_i$;

(e) $(\text{ad } e_i)^{1 - \bil{h_i}{\alpha_j}} e_j = 0 =  (
\text{ad } f_i)^{1 - \bil{h_i}{\alpha_j}} f_j \quad (i\ne j)$

\noindent
for all $h,h' \in \mathfrak{h}$ and all $i,j \in I$.

\subsection{Triangular decomposition} \label{td}
Let $\UU^0$ be the subalgebra of $\UU$ generated by the $K_h$, $h \in
Y$. It is clear that $\UU^0 \simeq \Q(v)[Y]$, the group algebra of
$Y$. Denote by $\UU^+$ (respectively, $\UU^-$) the subalgebra of $\UU$
generated by the $E_i$ (respectively, the $F_i$). Then the map
\[
  \UU^- \otimes \UU^0 \otimes \UU^+ \to \UU
\]
given by $x\otimes K_h \otimes y \to x K_h y$ is a vector space
isomorphism.

\subsection{}
There is a unique action of the Weyl group $W$ on $Y$ such that
$s_i(h)=h - \bil{h}{\alpha_i} h_i$ for all $i \in I$. Similarly, there
is a unique action of $W$ on $X$ such that $s_i(\lambda)=\lambda -
\bil{h_i}{\lambda} \alpha_i$ for all $i \in I$. Then
$\bil{s_i(h)}{\lambda} = \bil{h}{s_i(\lambda)}$ for all $h \in Y$,
$\lambda\in X$. Hence for any $w \in W$ we have
\[
  \bil{w(h)}{\lambda} = \bil{h}{w^{-1}(\lambda)}
\]
for all $h\in Y$, $\lambda \in X$.

\subsection{} \label{terminology}
A Cartan datum is of {\em finite type} if the symmetric matrix $(\,
(i,j)\, )$ indexed by $I\times I$ is positive definite. This is
equivalent to the requirement that $W$ is a finite group. A Cartan
datum that is not of finite type is said to be of {\em infinite type}.

A root datum is $X$-{\em regular}, respectively, $Y$-{\em regular} if
$\{\alpha_i\}$ (respectively, $\{h_i\}$) is linearly independent in $X$
(respectively, $Y$). We note for later reference that if the underlying
Cartan datum is of finite type then the root datum is automatically
both $X$-regular and $Y$-regular.

In the case where a root datum is $X$-regular, there is a partial
order on $X$ given by: $\lambda \le \lambda'$ if and only if $\lambda'
- \lambda \in \sum_i \N \alpha_i$.  In the case where the root datum
is $Y$-regular, we define $X^+ = \{ \lambda \in X \mid
\bil{h_i}{\lambda} \in \N, \text{ all } i \}$, elements of which are
known as dominant weights.

\subsection{The category $\mathcal{C}$} \label{modules}
Let $V$ be a $\UU$-module.  Given any $\lambda\in X$, set
\[
V_{\lambda} = \{ m\in V \mid K_h m =
v^{\bil{h}{\lambda}} m , \text{ for all } h \in Y \}.
\]
Let $\mathcal{C}$ be the category whose objects are $\UU$-modules $V$
such that $V = \oplus_{\lambda \in X} V_{\lambda}$ (as a $\Q(v)$
vector space).  Morphisms in $\mathcal{C}$ are $\UU$-module maps. The
subspaces $V_\lambda$ are called weight spaces of $V$; thus the
category $\mathcal{C}$ is the category of $\UU$-modules admitting a
type $\mathbf{1}$ weight space decomposition.

Let $\lambda \in X$. The Verma module $M(\lambda)$ is by definition
the quotient of $\UU$ by the left ideal $\sum_i \UU E_i + \sum_\mu
\UU(K_\mu - v^{\bil{\mu}{\lambda}})$. This is an object of
$\mathcal{C}$.

Recall that an object $M$ of $\mathcal{C}$ is {\em integrable} if for
any $m \in M$ and any $i \in I$ there is some positive integer $N$
such that $\divided{E_i}{n} m = \divided{F_i}{n} m = 0$ for all $n \ge
N$.

\subsection{The modules $L(\lambda)$}
Assume that the root datum is $Y$-regular. Then to each $\lambda \in
X^+$ corresponds an integrable object $L(\lambda)$ of $\mathcal{C}$,
which may be defined as the quotient of $M(\lambda)$ by the submodule
generated by all $\theta_i^{\bil{h_i}{\lambda}+1}$ for various $i \in
I$, where $\theta_i$ denotes the image of $E_i$ in $M(\lambda)$.  If
the root datum is both $X$-regular and $Y$-regular, then $L(\lambda)$
is a simple object in the full subcategory of $\mathcal{C}$ whose
objects are the integrable $\UU$-modules \cite[6.2.3]{Lusztig}.
Moreover, $L(\lambda)$ is not isomorphic to $L(\lambda')$ unless
$\lambda = \lambda'$ (for $\lambda, \lambda'$ in $X^+$). These
properties hold in case the root datum has finite type, since a root
datum of finite type is automatically both $X$-regular and
$Y$-regular.

Assume now that the root datum has finite type. Then $L(\lambda)$ is
of finite dimension over $\Q(v)$, and one has the following complete
reducibility property (see \cite[6.3.6]{Lusztig}): every integrable
$\UU$-module is a direct sum of simple $\UU$-modules isomorphic with
$L(\lambda)$ for various $\lambda \in X^+$. Since finite-dimensional
objects of $\mathcal{C}$ are integrable, they are completely reducible
in the above sense.

\section{Idempotent presentation of $\Sch(\pi)$}\label{app:A}\noindent
In this section we generalize the presentation of \cite{PGSA} to any
generalized $q$-Schur algebra corresponding to a given root datum
$(X,\{\alpha_i\}, Y, \{h_i\})$ of finite type and a given finite
saturated set $\pi \subset X^+$. In particular, this extends the
results of \cite{PGSA} to the reductive case.

\subsection{Generalized $q$-Schur algebras} \label{gsa}
Fix $(X,\{\alpha_i\}, Y,\{h_i\})$, a root datum whose underlying
Cartan datum is of finite type.  Recall (see \ref{terminology}) that
in this case the root datum is both $X$-regular and $Y$-regular.
A given subset $\pi$ of $X^+$ is said to be {\em saturated} (with
respect to the partial order $\le$ defined in \ref{terminology}) if
$\lambda \in \pi$ and $\lambda' \in X^+$ with $\lambda' \le \lambda$
imply $\lambda' \in \pi$.

Given a saturated subset $\pi$ of $X^+$ we define $\mathcal{C}[\pi]$
to be the full subcategory of $\mathcal{C}$ (see \ref{modules}) whose
objects are the $\UU$-modules $M$ in $\mathcal{C}$ such that $M =
\bigoplus_{\lambda \in W\pi} M_\lambda$.  Every finite dimensional
object of $\mathcal{C}[\pi]$ satisfies the property: every simple
composition factor of $M$ is isomorphic to some $L(\lambda)$ for
$\lambda \in \pi$.

A {\em generalized $q$-Schur algebra} is a quotient of the form
$\Sch(\pi) = \UU/\ideal$, where $\ideal$ is the ideal of $\UU$
consisting of the elements of $\UU$ annihilating every object of
$\mathcal{C}[\pi]$.  The ideal $\ideal$ is the {\em defining ideal} of
the generalized $q$-Schur algebra $\Sch(\pi)$.

\subsection{The algebra $\SchP(\pi)$}\label{def:Spi}
Given a root datum $(X,\{\alpha_i\}, Y,\{h_i\})$ and a finite
saturated set $\pi \subset X^+$ we define an algebra $\SchP(\pi)$ to be
the associative algebra with 1 given by the generators
\[
 E_i \quad(i \in I), \qquad F_i \quad(i \in I),\qquad  1_\lambda
 \quad(\lambda \in W\pi)
\]
and the relations

(a) $1_\lambda 1_{\lambda'} = \delta_{\lambda,\lambda'} 1_\lambda$, \qquad
$\sum_{\lambda \in W\pi} 1_\lambda = 1$;

(b) $E_i 1_\lambda = 1_{\lambda+\alpha_i} E_i$,\qquad $1_\lambda E_i =
E_i 1_{\lambda-\alpha_i}$;

(c) $F_i 1_\lambda = 1_{\lambda-\alpha_i} F_i$,\qquad $1_\lambda F_i =
F_i 1_{\lambda+\alpha_i}$;

(d) $E_iF_j - F_j E_i = \delta_{ij} \sum_{\lambda \in W\pi}
\;[\bil{h_i}{\lambda}]_i \, 1_\lambda$;

(e) $\displaystyle \sum_{s+s'=1-\bil{h_i}{\alpha_j}} (-1)^{s'}
\divided{E_i}{s}E_j \divided{E_i}{s'} = 0$ for $i \ne j$;

(f) $\displaystyle \sum_{s+s'=1-\bil{h_i}{\alpha_j}} (-1)^{s'}
\divided{F_i}{s}F_j \divided{F_i}{s'} = 0$ for $i \ne j$

\noindent
for all $i,j \in I$ and all $\lambda, \lambda' \in W\pi$.

For (b), (c) above, one must interpret the symbol $1_{\lp}$ as zero
whenever $\lp \notin W\pi$.

\subsection{}
We claim that $\SchP(\pi)$ is isomorphic with the generalized
$q$-Schur algebra $\Sch(\pi) = \UU/\ideal$ corresponding to $\pi$.
This will eventually be proved in \ref{thm:idemp-present} ahead, after
a series of lemmas to prepare the way. From now on fix $\pi$ and write
$\SchP = \SchP(\pi)$.

For any $h \in Y$ define an element $K_h \in \SchP$ by
\[
\textstyle K_h = \sum_{\lambda \in W\pi} v^{\bil{h}{\lambda}} 1_\lambda.
\]
Then in particular $K_0 = 1$.  An easy calculation shows that $K_h
K_{h'} = K_{h+h'}$ for all $h, h' \in Y$. Thus the elements $K_h$ ($h
\in Y$) satisfy \ref{def:UU}(a).

\begin{lem}\label{lem:3part}
Let $\{ H_1, \dots, H_n \}$ be a basis of the free
abelian group $Y$, and let $\SchP^0$ be the subalgebra of $\SchP$
generated by $K_{H_1}, \dots, K_{H_n}$. (This is
the same as the subalgebra generated by all $K_h$ for $h \in Y$.)

(a) $1_\lambda \in \SchP^0$ for all $\lambda \in W\pi$.

(b) $K_{-H_1}, \dots, K_{-H_n} \in \SchP^0$.

(c) $\{ 1_\lambda \mid \lambda \in W\pi \}$ is a basis for $\SchP^0$.
\end{lem}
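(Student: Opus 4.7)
The plan is to leverage the formula $K_h = \sum_{\lambda \in W\pi} v^{\bil{h}{\lambda}} 1_\lambda$, together with the orthogonal idempotent relation \ref{def:Spi}(a), to obtain the master identity
\[
P(K_{H_1}, \dots, K_{H_n}) = \sum_{\lambda \in W\pi} P(\vec{q}_\lambda)\, 1_\lambda
\qquad (P \in \Q(v)[x_1,\dots,x_n]),
\]
where $\vec{q}_\lambda = \bigl(v^{\bil{H_1}{\lambda}}, \dots, v^{\bil{H_n}{\lambda}}\bigr) \in \Q(v)^n$. This identity will be the engine driving all three parts.

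For (a), we first note that since $\{H_1,\dots,H_n\}$ is a $\Z$-basis of $Y$ and the pairing $Y \times X \to \Z$ is perfect, the assignment $\lambda \mapsto (\bil{H_i}{\lambda})_i$ is an injection $X \hookrightarrow \Z^n$; exponentiating by the transcendental $v$ then shows the finitely many points $\vec{q}_\lambda$ ($\lambda \in W\pi$) are pairwise distinct in $\Q(v)^n$. A standard Lagrange interpolation argument, using only non-negative powers of the $x_i$, then produces for each $\mu \in W\pi$ a polynomial $P_\mu$ with $P_\mu(\vec{q}_\lambda) = \delta_{\lambda,\mu}$; the master identity then gives $P_\mu(K_{H_1},\dots,K_{H_n}) = 1_\mu$, so $1_\mu \in \SchP^0$. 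Part (b) is then immediate, since $K_{-H_i} = \sum_\lambda v^{-\bil{H_i}{\lambda}} 1_\lambda$ lies in the $\Q(v)$-span of the $1_\lambda$'s. The parenthetical remark that $\SchP^0$ coincides with the subalgebra generated by all $K_h$, $h \in Y$, will follow from (b) together with $K_h K_{h'} = K_{h+h'}$ and the fact that the $H_i$'s and their negatives generate $Y$ as a monoid.

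For (c), spanning is clear: the $\Q(v)$-span of $\{1_\lambda\}$ is closed under multiplication by \ref{def:Spi}(a), contains the identity $1 = \sum 1_\lambda$, and contains every $K_{H_i}$, and hence contains all of $\SchP^0$; combined with (a) this gives equality. Linear independence is the main obstacle: if $\sum c_\lambda 1_\lambda = 0$, right-multiplication by $1_\mu$ collapses the sum to $c_\mu 1_\mu$, so one must show that each $1_\mu$ is nonzero in the abstract algebra $\SchP$. To do this we will construct a $\SchP$-module structure on the integrable $\UU$-module $L(\mu^+)$, where $\mu^+$ is the unique dominant $W$-conjugate of $\mu$ (which lies in $\pi$ by saturation): the operators $E_i$, $F_i$ will act via the $\UU$-structure, and each $1_\nu$ ($\nu \in W\pi$) will act as the projection onto $L(\mu^+)_\nu$. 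Checking that the weights of $L(\mu^+)$ lie in $W\pi$ (by $W$-invariance of the weight set and saturation of $\pi$) places $L(\mu^+)$ in $\catC[\pi]$; verifying \ref{def:Spi}(a)--(c) and (e)--(f) is routine, and (d) reduces on the $\nu$-weight space to $(\tilK_i - \tilK_{-i})/(v_i-v_i^{-1}) = (v_i^{\bil{h_i}{\nu}} - v_i^{-\bil{h_i}{\nu}})/(v_i - v_i^{-1}) = [\bil{h_i}{\nu}]_i$ via \ref{def:UU}(d). Since $\mu$ is a weight of $L(\mu^+)$, the projection $1_\mu$ acts nontrivially, proving $1_\mu \ne 0$ in $\SchP$ and completing the proof.
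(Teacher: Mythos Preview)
Your argument is correct and, for parts (a) and (b), essentially the same as the paper's: the paper's explicit products $J_a^\lambda = \prod_{\lambda''}(K_{H_a} - v^{\lambda''_a})$ and then $J_1^\lambda \cdots J_n^\lambda$ are precisely (up to normalization) the Lagrange interpolants you invoke, and your ``master identity'' is what the paper computes in the course of expanding $J_a^\lambda$.

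For part (c) you do strictly more than the paper. The paper argues spanning exactly as you do, but for linear independence simply asserts that it ``follows from the fact that the $1_\lambda$ form a family of orthogonal idempotents'' --- which of course requires each $1_\lambda$ to be nonzero, a fact not yet established at that point in the paper (it is implicit in the later Lemma~\ref{iso}). You close this gap directly by constructing a $\SchP$-module structure on $L(\mu^+)$ on which $1_\mu$ acts as a nonzero projection; this is essentially the content of Lemma~\ref{iso} brought forward, and your verification of relations \ref{def:Spi}(a)--(f) is correct. One small quibble: $\mu^+ \in \pi$ holds simply because $\mu \in W\pi$ means $\mu$ is $W$-conjugate to some element of $\pi$, so no appeal to saturation is needed there (saturation is what ensures all weights of $L(\mu^+)$ lie in $W\pi$, as you correctly use).
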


\begin{proof}
Let $\{ \varepsilon^\prime_1, \dots, \varepsilon^\prime_n \}$ be the
basis of $X$ dual to $\{ H_1, \dots, H_n \}$, so
that $\bil{H_a}{\varepsilon^\prime_b} = \delta_{ab}$ for
$a,b \in \{1, \dots, n\}$. For $\lambda \in X$ write
\[\textstyle
\lambda = \sum_{a=1}^n \lambda_a \,\varepsilon^\prime_a \qquad
(\lambda_a \in \Z).
\]
We have $\lambda_a = \bil{H_a}{\lambda}$ for each $a$.

Let $\lambda \in X$ be given.  For each $a=1, \dots, n$ set
$\Gamma(a,\lambda) = \{ \lp \in W\pi \mid \lp_a = \lambda_a \}$.  Set
\[
J_a^\lambda = \prod_{\stackrel{\lpp\in W\pi}{\lpp \notin \Gamma(a,\lambda)}}
(K_{H_a} - v^{\lpp_a})
\]
We have equalities
\begin{equation*}
\begin{aligned}
J_a^\lambda &= \prod_{\lpp} \left( \sum_{\lp \in W\pi} v^{\lp_a}1_{\lp} -
v^{\lpp_a} \sum_{\lp \in W\pi} 1_{\lp} \right) \\
&=  \prod_{\lpp} \left( \sum_{\lp \in W\pi} (v^{\lp_a} -
v^{\lpp_a}) 1_{\lp} \right) \\
&= \sum_{\lp \in W\pi} \prod_{\lpp}  (v^{\lp_a} -
v^{\lpp_a}) 1_{\lp}
\end{aligned}
\end{equation*}
where the products are over all $\lpp \in W\pi -
\Gamma(a,\lambda)$. The idempotent orthogonality relations \ref{def:Spi}(a)
were used to interchange the sum and product.  Noting that the product
in the sum on the last line above vanishes for any $\lp \in W\pi
- \Gamma(a,\lambda)$, we obtain the expression
\begin{equation*}
J_a^\lambda = \sum_{\lp \in \Gamma(a,\lambda)} \prod_{\lpp}
(v^{\lp_a} - v^{\lpp_a}) 1_{\lp}
\end{equation*}
where the product in this sum is a non-zero {\em constant}, since
$\lp_a = \lambda_a$ for all $\lp \in \Gamma(a,\lambda)$.  This proves
that $J_a^\lambda$ is (up to a non-zero scalar) the sum of all
idempotents $1_{\lp}$ for which $\lp_a = \lambda_a$. This property
holds for all $a=1, \dots, n$. It follows that the product
$J_1^\lambda \cdots J_n^\lambda$ is, up to a non-zero scalar multiple,
equal to $1_\lambda$, since $1_\lambda$ is the unique idempotent
appearing in each of the sums in the product.

By definition $J_a^\lambda$ belongs to $\SchP^0$, so the result of the
previous paragraph shows that $1_\lambda$ (for any $\lambda \in W\pi$)
lies within $\SchP^0$.  This proves part (a).

Part (b) follows from part (a) and the definition of
$K_{-H_a}$.

By definition of the $K_{H_a}$ we see that the subalgebra of
$\SchP$ generated by the $1_\lambda$ ($\lambda \in W\pi$) contains the
$K_{H_a}$. By part (a) this subalgebra equals $\SchP^0$. Part
(c) now follows from the fact that the $1_\lambda$ ($\lambda \in
W\pi$) form a family of orthogonal idempotents.
\end{proof}

\begin{lem}\label{image}
The algebra $\SchP=\SchP(\pi)$ is a homomorphic image of $\UU$, via the
homomorphism sending $E_i \to E_i$, $F_i \to F_i$, $K_h \to K_h$.
\end{lem}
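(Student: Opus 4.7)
The plan is to check that every defining relation of $\UU$ listed in \ref{def:UU} is satisfied by the images $E_i \mapsto E_i$, $F_i \mapsto F_i$, $K_h \mapsto K_h$ in $\SchP$, where $K_h = \sum_{\lambda \in W\pi} v^{\bil{h}{\lambda}} 1_\lambda$. Once this is done, the universal property of $\UU$ furnishes the required algebra homomorphism $\UU \to \SchP$.

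Relation \ref{def:UU}(a) has already been observed just after \ref{def:Spi}; it follows by a direct expansion using the orthogonality of the $1_\lambda$. For relations \ref{def:UU}(b), (c), I would compute
\begin{equation*}
K_h E_i = \sum_{\lambda \in W\pi} v^{\bil{h}{\lambda}} 1_\lambda E_i
= \sum_{\lambda} v^{\bil{h}{\lambda}} E_i 1_{\lambda - \alpha_i}
\end{equation*}
using \ref{def:Spi}(b), then reindex $\mu = \lambda - \alpha_i$ (noting that terms with $\mu \notin W\pi$ contribute $0$) to get $v^{\bil{h}{\alpha_i}} E_i K_h$. The analogous reindexing with \ref{def:Spi}(c) handles \ref{def:UU}(c). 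The Serre relations \ref{def:UU}(e), (f) are literally the relations \ref{def:Spi}(e), (f), so nothing is to be checked there.

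The one relation requiring a small identification is \ref{def:UU}(d), namely that
\begin{equation*}
\delta_{ij}\sum_{\lambda \in W\pi} [\bil{h_i}{\lambda}]_i \, 1_\lambda
= \delta_{ij}\,\frac{\tilK_i - \tilK_{-i}}{v_i - v_i^{-1}}
\end{equation*}
in $\SchP$. This reduces to observing that $\tilK_i = K_{d_ih_i} = \sum_\lambda v_i^{\bil{h_i}{\lambda}} 1_\lambda$ and similarly for $\tilK_{-i}$, so the right-hand side expands as $\sum_\lambda \frac{v_i^{\bil{h_i}{\lambda}} - v_i^{-\bil{h_i}{\lambda}}}{v_i - v_i^{-1}} 1_\lambda$, which matches the left-hand side term by term by definition of $[\,\cdot\,]_i$.

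None of these checks presents a real obstacle; the only subtlety worth flagging is being careful in (b) and (c) about the convention from \ref{def:Spi} that $1_{\lp}$ is interpreted as zero when $\lp \notin W\pi$, so that the reindexing step is genuinely legitimate on both sides.
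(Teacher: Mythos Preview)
Your proposal is correct and follows essentially the same approach as the paper's proof: verify each defining relation \ref{def:UU}(a)--(f) in $\SchP$ using the idempotent expansion of $K_h$ and the relations \ref{def:Spi}. The only cosmetic difference is that the paper writes the sums in the verification of (b), (c) as ranging over all $\lambda \in X$ (with $1_\lambda = 0$ for $\lambda \notin W\pi$), which makes the reindexing step $\lambda \mapsto \lambda - \alpha_i$ transparently a bijection; your version, summing over $W\pi$, is equivalent but requires the extra observation you flag at the end, namely that the ``missing'' terms with $\mu \in W\pi$ but $\mu + \alpha_i \notin W\pi$ satisfy $E_i 1_\mu = 1_{\mu+\alpha_i} E_i = 0$ and so do not actually contribute.
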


\begin{proof}
The algebra $\SchP(\pi)$ is by definition generated by all $E_i, F_i$
($i\in I$) and $K_h$ ($h \in Y$).  We have already observed that
the generators of $\SchP$ satisfy \ref{def:UU}(a).

We remind the reader of the convention $1_\lambda = 0$ for any
$\lambda \in X - W\pi$. For any $h \in Y$ we have
\begin{equation*}
\begin{aligned}
K_h E_i &= \sum_{\lambda\in X} v^{\bil{h}{\lambda}}
1_\lambda E_i = \sum_{\lambda\in X}
v^{\bil{h}{\lambda}} E_i 1_{\lambda-\alpha_i}\\
&= \sum_{\lambda\in X}
v^{\bil{h}{\lambda+\alpha_i}} E_i 1_{\lambda}
= v^{\bil{h}{\alpha_i}} E_i \sum_{\lambda\in X}
v^{\bil{h}{\lambda}} 1_{\lambda}
= v^{\bil{h}{\alpha_i}} E_i K_h
\end{aligned}
\end{equation*}
which proves that the generators of $\SchP$ satisfy \ref{def:UU}(b). An
entirely similar calculation proves that the generators of $\SchP$
satisfy \ref{def:UU}(c).

From \ref{def:Spi}(d) and the definitions we obtain equalities
\begin{equation*}
\begin{aligned}
E_i F_j - F_j E_i &= \delta_{ij} \sum_{\lambda \in W\pi}
[\bil{h_i}{\lambda}]_i 1_\lambda
= \delta_{ij} \sum_{\lambda \in W\pi}
\frac{v_i^{\bil{h_i}{\lambda}} - v_i^{-\bil{h_i}{\lambda}}}
{v_i - v_i^{-1}} \,1_\lambda \\
&= \delta_{ij} \frac{(\sum_{\lambda \in W\pi} v_i^{\bil{h_i}{\lambda}}
1_\lambda) - (\sum_{\lambda \in W\pi}
v_i^{-\bil{h_i}{\lambda}} 1_\lambda)}{v_i-v_i^{-1}} .
\end{aligned}
\end{equation*}
But $\sum_{\lambda \in W\pi} v_i^{\bil{h_i}{\lambda}} 1_\lambda =
\sum_{\lambda \in W\pi} v^{d_i\bil{h_i}{\lambda}} 1_\lambda =
(\sum_{\lambda \in W\pi} v^{\bil{h_i}{\lambda}} 1_\lambda)^{d_i} =
K_i^{d_i} = \tilK_i$ and by a similar calculation $\sum_{\lambda \in
W\pi} v_i^{-\bil{h_i}{\lambda}} 1_\lambda = \tilK_{-i}$.  Thus
the above equalities take the form
\[
E_i F_j - F_j E_i = \delta_{ij} \frac{ \tilK_i - \tilK_{-i} }{v_i-v_i^{-1}}
\]
proving that the generators of $\SchP$ satisfy \ref{def:UU}(d).

Relations \ref{def:UU}(e), (f) are also satisfied by the generators of
$\SchP$ since those relations are identical with \ref{def:Spi}(e),
(f). The lemma is proved.
\end{proof}

\begin{lem} \label{lem:fdss}
The algebra $\SchP=\SchP(\pi)$ is a finite-dimensional semisimple algebra.
\end{lem}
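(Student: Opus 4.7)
The plan is to separate the claim into finite-dimensionality and semisimplicity, in each case leaning on the surjection $\phi\colon \UU \twoheadrightarrow \SchP$ from Lemma \ref{image}.

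For finite-dimensionality, I combine the triangular decomposition $\UU = \UU^- \UU^0 \UU^+$ of \ref{td} with the idempotent partition $1 = \sum_{\lambda \in W\pi} 1_\lambda$, using $1_\lambda \SchP^0 = \Q(v)\, 1_\lambda$ (which follows from Lemma \ref{lem:3part}), to obtain
\[
\SchP \;=\; \bigoplus_{\lambda, \mu \in W\pi} 1_\lambda \SchP 1_\mu,
\]
where each summand is spanned by elements $1_\lambda \phi(F)\phi(E) 1_\mu$ with $E \in \UU^+_\alpha$, $F \in \UU^-_{-\beta}$ subject to $\alpha - \beta = \lambda - \mu$ (forced by \ref{def:Spi}(b), (c)) and $\mu + \alpha \in W\pi$ (using the convention $1_\nu = 0$ for $\nu \notin W\pi$). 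Finiteness of $W\pi$ restricts $(\alpha, \beta)$ to a finite set, and each $\UU^\pm_\nu$ is finite-dimensional, so $\dim_{\Q(v)} 1_\lambda \SchP 1_\mu < \infty$; summing over the finite index set $W\pi \times W\pi$ gives $\dim \SchP < \infty$.

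For semisimplicity, I construct a natural module and apply Jacobson density. Let $V = \bigoplus_{\lambda \in \pi} L(\lambda)$, a finite-dimensional $\UU$-module whose weights are exactly $W\pi$ (by saturation of $\pi$). I promote the $\UU$-structure on $V$ to an $\SchP$-structure by letting each $1_\lambda$ act as the projection onto $V_\lambda$; the relations \ref{def:Spi}(a)--(f) are then routinely verified, the only nontrivial point being that $(\tilK_i - \tilK_{-i})/(v_i - v_i^{-1})$ acts on $V_\lambda$ as the scalar $[\bil{h_i}{\lambda}]_i$. Each $L(\lambda)$ remains simple as an $\SchP$-module (its $\SchP$-submodules coincide with its $\UU$-submodules via $\phi$), satisfies $\End_\SchP L(\lambda) = \Q(v)$, and distinct $\lambda \in \pi$ yield non-isomorphic simples. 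Thus $V$ is a semisimple $\SchP$-module, so $\rad(\SchP) \cdot V = 0$; \emph{if} $V$ is also faithful, then $\rad(\SchP) = 0$ and Jacobson density identifies $\SchP$ with the semisimple algebra $\prod_{\lambda \in \pi} \End_{\Q(v)}(L(\lambda))$.

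The main obstacle is therefore the faithfulness of $V$ as an $\SchP$-module. Reducing to a homogeneous element $u = 1_\lambda u 1_\mu$ acting as zero on $V$, one expands $u$ as a linear combination of terms $1_\lambda \phi(F)\phi(E) 1_\mu$ of the above form and must deduce that every linear relation satisfied by these operators on $V$ already holds in $\SchP$. I would carry this out by following the template of \cite{PGSA}, in which the analogous faithfulness is proved in the semisimple setting. The adjustment needed for the present reductive root-datum framework is minor: allowing $K_h$ for $h$ outside the coroot lattice enlarges $\UU^0$ but, by Lemma \ref{lem:3part}, yields the same $\SchP^0$, so no genuinely new obstruction appears on the Cartan side.
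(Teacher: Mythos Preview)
Your finite-dimensionality argument is correct and arguably tidier than the paper's: instead of invoking a PBW basis in root vectors and checking their nilpotence in $\SchP$, you use the Peirce decomposition $\SchP = \bigoplus_{\lambda,\mu} 1_\lambda \SchP 1_\mu$ together with the weight constraint $\mu+\alpha \in W\pi$ to restrict $(\alpha,\beta)$ to a finite set, appealing only to $\dim \UU^\pm_\nu < \infty$. The paper instead shows each root vector is nilpotent in $\SchP$ and deduces that $\SchP^\pm$ is finite-dimensional from an ordered PBW monomial basis; combined with Lemma~\ref{lem:3part}(c) for $\SchP^0$, this gives the result via $\SchP = \SchP^-\SchP^0\SchP^+$.

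Your semisimplicity argument, however, has a genuine gap. Faithfulness of $V=\bigoplus_{\lambda\in\pi} L(\lambda)$ as an $\SchP$-module is essentially the content of Lemma~\ref{iso} and Theorem~\ref{thm:idemp-present}, and in the paper's development those results \emph{use} the semisimplicity established here (the dimension count $\dim\SchP=\sum(\dim L(\lambda))^2$ in \ref{iso} relies on it). Deferring to ``the template of \cite{PGSA}'' does not break the circle, since that paper follows the same logical order. As written, then, your proof outsources exactly the hard step and would be circular if plugged into the present paper.

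The paper's route avoids this entirely and is much shorter: regard the left regular module $\SchP$ itself as a $\UU$-module via the surjection $\phi$. It is finite-dimensional by the first half of the lemma, hence completely reducible as a $\UU$-module. Since $\phi$ is surjective, $\UU$-submodules and $\SchP$-submodules of $\SchP$ coincide, so $\SchP$ is semisimple over itself, i.e.\ $\SchP$ is a semisimple algebra. No auxiliary module, no density, no faithfulness check.
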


\begin{proof}
First, we note that the generators $E_i, F_i$ for all $i \in I$ are
nilpotent elements of $\SchP$, since by \ref{def:Spi}(c), (d) for $N$
sufficiently large we have $E_i^N 1_\lambda = 0 = F_i^N 1_\lambda$ for
all $\lambda \in W\pi$. This implies $E_i^N = 0 = F_i^N$ since $\sum
1_\lambda = 1$.

By the triangular decomposition (see \ref{td}) we have $\UU = \UU^-
\UU^0 \UU^+$.  It follows that $\SchP$ has a similar decomposition
$\SchP = \SchP^- \SchP^0 \SchP^+$ where $\SchP^0$, $\SchP^-$, $\SchP^+$ are
defined as the homomorphic images of $\UU^0$, $\UU^-$, $\UU^+$,
respectively, under the canonical quotient map $\UU \to \SchP$.

There is an analogue of the Poincare-Birkhoff-Witt (PBW) theorem for
$\UU$.  There exist (in addition to the given $E_i$ and $F_i$) root
vectors corresponding to each non simple root in the root system and
these additional root vectors can easily be shown to satisfy (in
$\SchP$) a commutation relation analogous to \ref{def:Spi}(c), (d).
Thus it follows that all the root vectors are nilpotent elements of
$\SchP$.  From the PBW theorem it follows that $\UU^+$ has a basis
consisting of products of powers of root vectors, taken in some fixed
ordering of the positive roots.  The nilpotence of the root vectors
thus implies that $\SchP^+$ is finite-dimensional. A similar argument
shows that $\SchP^-$ is finite-dimensional.

Another approach to the finite-dimensionality of $\SchP^+$, $\SchP^-$
can be obtained from the so-called `monomial' basis of $\UU^+$ (see
Lusztig \cite{Lusztig:1990} for the simply-laced case and Chari and Xi
\cite{CX} in general).

It follows from \ref{lem:3part}(c) that $\SchP^0$ is
finite-dimensional.  Thus, from the triangular decomposition for
$\SchP$ it follows immediately that $\SchP$ is finite-dimensional.

The algebra $\SchP$ is a $\UU$-module via the canonical quotient map
$\UU \to \SchP$. It is known that finite-dimensional $\UU$-modules are
completely reducible (i.e., a direct sum of simple
$\UU$-modules). Hence, $\SchP$ is semisimple as a $\UU$-module, hence
semisimple as an $\SchP$-module. Thus $\SchP$ is a semisimple algebra.
\end{proof}

\begin{remark}
  Although $\SchP$ is a direct sum of simple $\UU$-modules by the
  preceding argument, we do not yet know that it is an object of
  $\mathcal{C}$. That is a consequence of the next result.
\end{remark}

\begin{lem}\label{wtspace}
If $M$ is any finite-dimensional $\SchP$-module then $M$ is an object
of $\mathcal{C}[\pi]$ and the decomposition $M = \oplus_{\lambda\in
W\pi} 1_\lambda M$ is a weight space decomposition of $M$ as a
$\UU$-module.

\end{lem}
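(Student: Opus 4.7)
The plan is to bootstrap the direct sum decomposition from the idempotent relations in \ref{def:Spi}, and then to identify each summand as a $\UU$-weight space using the formula $K_h = \sum_{\mu\in W\pi} v^{\bil{h}{\mu}} 1_\mu$ from \ref{def:Spi} together with Lemma \ref{image}.

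First I would use relation \ref{def:Spi}(a), namely that the $1_\lambda$ ($\lambda\in W\pi$) form a complete family of orthogonal idempotents in $\SchP$, to conclude that for any $\SchP$-module $M$,
\[
M = \bigoplus_{\lambda \in W\pi} 1_\lambda M
\]
as a $\Q(v)$-vector space. This is purely formal: completeness gives $M = \sum_\lambda 1_\lambda M$, and orthogonality gives the directness of the sum.

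Next I would check that each summand $1_\lambda M$ is contained in the $\UU$-weight space $M_\lambda$. Pull $M$ back along the surjection $\UU \to \SchP$ of Lemma \ref{image} to view it as a $\UU$-module. For any $m\in 1_\lambda M$ and any $h\in Y$, the definition $K_h = \sum_{\mu\in W\pi} v^{\bil{h}{\mu}} 1_\mu$ together with the orthogonality of the idempotents yields
\[
K_h m = \Bigl(\sum_{\mu\in W\pi} v^{\bil{h}{\mu}} 1_\mu\Bigr) 1_\lambda m
      = v^{\bil{h}{\lambda}} 1_\lambda m = v^{\bil{h}{\lambda}} m,
\]
so indeed $1_\lambda M \subseteq M_\lambda$. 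Combined with the direct sum decomposition above (and the fact that distinct $\lambda\in W\pi$ give distinct characters of $\UU^0$, so the $M_\lambda$ are independent), we obtain $1_\lambda M = M_\lambda$ for every $\lambda\in W\pi$ and $M_\mu = 0$ for $\mu\in X\smallsetminus W\pi$. Thus $M = \bigoplus_{\lambda\in W\pi} M_\lambda$, which is exactly the condition for $M$ to be an object of $\mathcal{C}[\pi]$.

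There is no genuine obstacle here; the statement is essentially a direct translation of the fact that the Cartan-type generator $K_h$ of $\UU$ maps, under the quotient of Lemma \ref{image}, to the element of $\SchP^0$ that acts diagonally on the idempotent decomposition with eigenvalue $v^{\bil{h}{\lambda}}$ on $1_\lambda M$. The only point requiring a moment's care is the bookkeeping to conclude that $1_\lambda M$ coincides with (rather than is merely contained in) the weight space $M_\lambda$, which follows because the characters $h\mapsto v^{\bil{h}{\lambda}}$ of $\UU^0$ are distinct for distinct $\lambda\in W\pi\subset X$ in view of the $Y$-regularity of the root datum recorded in \ref{gsa}.
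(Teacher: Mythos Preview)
Your argument is correct and follows essentially the same route as the paper: obtain the decomposition $M=\bigoplus_{\lambda}1_\lambda M$ from the orthogonal idempotents, show $1_\lambda M\subseteq M_\lambda$ via $K_h 1_\lambda = v^{\bil{h}{\lambda}}1_\lambda$, and then deduce equality. The only difference is cosmetic: for the reverse inclusion the paper takes $m\in M_\lambda$ and computes $1_{\lambda'}m=0$ for $\lambda'\ne\lambda$ directly, whereas you invoke the linear-algebra fact that $1_\lambda M\subseteq M_\lambda$ together with $\bigoplus_\lambda 1_\lambda M = M$ and independence of the $M_\lambda$ forces equality.

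One small correction: the fact that distinct $\lambda\in X$ give distinct characters $h\mapsto v^{\bil{h}{\lambda}}$ of $\UU^0$ follows from the \emph{perfectness} of the pairing $\bil{\ }{\ }:Y\times X\to\Z$ (see \ref{root}), not from $Y$-regularity. $Y$-regularity is the linear independence of the simple coroots $\{h_i\}$, which is not what is being used here.
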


\begin{proof}
View $M$ as a $\UU$-module by means of the canonical quotient map
$\UU\to \SchP$.  In $\SchP$ we have the equality $1=\sum_{\lambda\in
W\pi} 1_\lambda$, which implies that $M = \oplus_{\lambda\in W\pi}
1_\lambda M$.  Since
\begin{equation*}
 1_\lambda K_h = K_h 1_\lambda = v^{\bil{h}{\lambda}} 1_\lambda
 \qquad (h \in Y)
\end{equation*}
we see immediately that $K_h \, m = v^{\bil{h}{\lambda}} m$ for any
$m \in 1_\lambda M$. This justifies the inclusion $1_\lambda M \subset
M_\lambda$, where
\[
M_\lambda = \{ m \in M \mid K_h \, m = v^{\bil{h}{\lambda}} m,
\text{ all } h \in Y \} .
\]

On the other hand, for a given $\lambda \in W\pi$, assume that $m \in
M_{\lambda}$, so $K_h m = v^{\bil{h}{\lambda}} m$ for all $h \in
Y$. Multiplying by $1_{\lp}$ we obtain
\[
1_{\lp} K_h  m = v^{\bil{h}{\lambda}} 1_{\lp} m
\]
and it follows that
\begin{equation*}
v^{\bil{h}{\lp}} 1_{\lp} m = v^{\bil{h}{\lambda}} 1_{\lp} m
\qquad(h \in Y, \lp \in W\pi).
\end{equation*}
Choosing $h \in Y$ judiciously one concludes that $1_{\lp} m = 0$
for any $\lp \ne \lambda$, and hence
\[
\textstyle
m = 1\cdot m = \sum_{\lp \in W\pi} 1_{\lp} m = 1_\lambda m.
\]
This proves that $m \in 1_\lambda M$, establishing the reverse
inclusion $M_{\lambda} \subset 1_\lambda M$.  Hence $M_{\lambda} =
1_\lambda M$ for all $\lambda \in W\pi$, and thus $M =
\oplus_{\lambda\in W\pi} 1_\lambda M = \oplus_{\lambda\in W\pi}
M_\lambda$, so $M$ is an object of $\mathcal{C}[\pi]$, as desired.
\end{proof}

\begin{lem}\label{iso}
The set $\{ L(\lambda) \mid \lambda \in \pi \}$ is the set of
isomorphism classes of simple $\SchP$-modules, and $\dim \SchP =
\sum_{\lambda \in \pi} (\dim L(\lambda))^2$.
\end{lem}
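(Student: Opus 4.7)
The plan is to show that the simple $\SchP$-modules are precisely the $L(\lambda)$ with $\lambda \in \pi$, and then derive the dimension formula by Artin--Wedderburn using that $\End_{\SchP}(L(\lambda)) = \Q(v)$.

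First I would verify that for each $\lambda \in \pi$ the simple $\UU$-module $L(\lambda)$ carries a natural $\SchP$-module structure. Since $\pi$ is saturated and $\lambda \in \pi$, every weight of $L(\lambda)$ lies in $W\pi$, so $L(\lambda)$ is an object of $\mathcal{C}[\pi]$. Define $1_\mu$ (for $\mu \in W\pi$) to act as the projection onto the weight space $L(\lambda)_\mu$, with $E_i$, $F_i$ acting via the existing $\UU$-action. The relations \ref{def:Spi}(a)--(f) are then routinely checked: (a) is orthogonality of weight-space projections; (b), (c) hold because $E_i$ (resp.\ $F_i$) shifts weights by $+\alpha_i$ (resp.\ $-\alpha_i$); (d) follows from \ref{def:UU}(d), since on $L(\lambda)_\mu$ the operator $(\tilK_i - \tilK_{-i})/(v_i - v_i^{-1})$ acts by the scalar $[\bil{h_i}{\mu}]_i$; and (e), (f) are the Serre relations, already holding in $\UU$.

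Next I would show that every simple $\SchP$-module is of the form $L(\lambda)$ for some $\lambda \in \pi$. Since $\SchP$ is finite-dimensional (Lemma \ref{lem:fdss}), any simple $\SchP$-module $S$ is finite-dimensional. Via the surjection $\UU \to \SchP$ of Lemma \ref{image}, $S$ becomes a $\UU$-module whose $\UU$-submodules coincide with its $\SchP$-submodules, so $S$ is simple as a $\UU$-module as well. By Lemma \ref{wtspace}, $S$ lies in $\mathcal{C}[\pi]$, so it is an integrable finite-dimensional simple $\UU$-module whose weights lie in $W\pi$, and must therefore be isomorphic to $L(\mu)$ for some $\mu \in \pi$. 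Combined with the preceding step and the fact that distinct $L(\lambda)$ are non-isomorphic, this gives the first claim.

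For the dimension formula, Lemma \ref{lem:fdss} together with Artin--Wedderburn gives $\SchP \cong \prod_{\lambda \in \pi} \End_{D_\lambda}(L(\lambda))$, where $D_\lambda = \End_{\SchP}(L(\lambda))^{\opp}$. I would show $D_\lambda = \Q(v)$: any $\SchP$-linear endomorphism $\varphi$ of $L(\lambda)$ commutes with $\UU^0$ and hence preserves weight spaces, and since $L(\lambda)_\lambda$ is one-dimensional it acts on a highest-weight vector $v_\lambda$ by a scalar; as $L(\lambda)$ is $\UU$-generated by $v_\lambda$, that scalar determines $\varphi$ on all of $L(\lambda)$. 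The dimension formula $\dim \SchP = \sum_{\lambda \in \pi} (\dim_{\Q(v)} L(\lambda))^2$ follows immediately. The main obstacle is the first step --- confirming that the weight-space projections $1_\mu$ on $L(\lambda)$, together with the $\UU$-action, satisfy all the relations of \ref{def:Spi}(a)--(f); everything after that is standard semisimple representation theory applied to the highest-weight structure of $L(\lambda)$.
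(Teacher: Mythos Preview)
Your proposal is correct and follows essentially the same approach as the paper: both use Lemma~\ref{image} to view simple $\SchP$-modules as simple $\UU$-modules, Lemma~\ref{wtspace} to force their weights into $W\pi$ (hence their highest weight into $\pi$), and the obvious weight-space projections to give each $L(\lambda)$ with $\lambda\in\pi$ an $\SchP$-structure. The only difference is in the level of detail---the paper dispatches the dimension formula in a single line as ``standard theory of finite-dimensional algebras,'' whereas you explicitly invoke Artin--Wedderburn and verify $\End_{\SchP}(L(\lambda))=\Q(v)$ via a highest-weight argument; similarly, the paper frames the exclusion of $L(\lambda)$ for $\lambda\notin\pi$ as a contradiction, while you argue directly that any simple must land in $\pi$.
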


\begin{proof}
The simple $\SchP$-modules are necessarily simple objects of
$\mathcal{C}$. Let $\lambda \in X^+ - \pi$. If $L(\lambda)$
was an $\SchP$-module, then by \ref{wtspace} $L(\lambda)$ would be a
direct sum of the weight spaces $L(\lambda)_{\lp}$ as $\lp$ varies
over $W\pi$. This is a contradiction since $L(\lambda)_\lambda \ne 0$
and $\lambda \notin W\pi$.

On the other hand, for every $\lambda \in \pi$, $L(\lambda)$ inherits
a well-defined $\SchP$-module structure from its $\UU$-module
structure, just by defining the action on the generators of $\SchP$ by
the obvious formulas. The first claim is proved.

The second claim follows immediately by standard theory of
finite-dimensional algebras.
\end{proof}

\begin{thm}\label{thm:idemp-present}
  The algebra $\SchP(\pi)$ is isomorphic with the the generalized Schur
  algebra determined by the given root datum and the saturated set
  $\pi$. In other words, $\SchP(\pi) \simeq \UU/\ideal$ where $\ideal$ is the
  ideal consisting of all elements of $\UU$ annihilating every object
  of the category $\mathcal{C}[\pi]$.
\end{thm}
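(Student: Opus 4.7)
The plan is to exhibit a surjective algebra homomorphism $\phi: \UU \to \SchP(\pi)$ and show its kernel coincides with $\Lambda$. The map itself is already provided by Lemma \ref{image}, sending each generator to its namesake; it is surjective because its image contains $E_i$, $F_i$ and all $K_h$, and by Lemma \ref{lem:3part}(a) the subalgebra these generate already contains every $1_\lambda$, which is the remaining family of generators of $\SchP$. With $\phi$ in hand the theorem reduces to showing $\ker\phi = \Lambda$.

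For the inclusion $\Lambda \subseteq \ker\phi$, I would view $\SchP(\pi)$ as a module over itself. By Lemma \ref{lem:fdss} it is finite-dimensional, and Lemma \ref{wtspace} then puts it inside $\mathcal{C}[\pi]$. Any $u \in \Lambda$ annihilates this module by definition, but its action is left multiplication by $\phi(u)$, so evaluating at $1 \in \SchP$ gives $\phi(u) = 0$.

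For the reverse inclusion $\ker\phi \subseteq \Lambda$, I would show that every object $M$ of $\mathcal{C}[\pi]$ carries a compatible $\SchP$-module structure. Each element of $M$ has nonzero components in only finitely many weight spaces because $W\pi$ is finite; since $E_i$, $F_i$ shift weights by $\pm n\alpha_i$ out of the finite set $W\pi$, sufficiently high divided powers kill any element, so $M$ is integrable. The complete reducibility statement recalled in \ref{modules} then decomposes $M$ into a direct sum of simples $L(\lambda)$, whose highest weights must lie in $W\pi \cap X^+ = \pi$. By Lemma \ref{iso} each such $L(\lambda)$ is an $\SchP$-module, and the $\SchP$-structure is literally the original $\UU$-structure read through $\phi$, since both agree on the generators of $\UU$. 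Hence the $\UU$-action on $M$ factors through $\phi$, so any $u \in \ker\phi$ annihilates $M$, giving $u \in \Lambda$.

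The main technical point, and the only place where one has to do more than cite an earlier lemma, is the integrability observation that promotes finite-dimensional information (Lemmas \ref{wtspace} and \ref{iso}) to arbitrary, possibly infinite-dimensional, objects of $\mathcal{C}[\pi]$. Once that reduction to a sum of $L(\lambda)$'s with $\lambda \in \pi$ is performed, both inclusions fall out cleanly and no computation is required.
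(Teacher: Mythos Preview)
Your proof is correct, and for the inclusion $\ker\phi\subseteq\Lambda$ it follows the same line as the paper (showing every object of $\mathcal{C}[\pi]$ is an $\SchP$-module through $\phi$), though you are more careful than the paper about integrability for possibly infinite-dimensional objects, which the paper dismisses as ``clear from \ref{iso}''. The genuine difference lies in the reverse inclusion $\Lambda\subseteq\ker\phi$. The paper argues by dimension counting: $\UU/\Lambda$ has dimension at least $\sum_{\lambda\in\pi}(\dim L(\lambda))^2$ since each $L(\lambda)$ is a simple $\UU/\Lambda$-module, and at most that same number since it is a quotient of $\SchP$, which by Lemma~\ref{iso} has exactly that dimension. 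Your argument is more direct: $\SchP$ is itself an object of $\mathcal{C}[\pi]$ by Lemmas~\ref{lem:fdss} and~\ref{wtspace}, so $\Lambda$ kills it, and evaluating at $1$ gives $\phi(\Lambda)=0$. Your route sidesteps the dimension formula entirely and is arguably cleaner; the paper's route is more in keeping with the semisimple-algebra viewpoint it has been building, and makes the equality $\dim\Sch(\pi)=\sum_{\lambda\in\pi}(\dim L(\lambda))^2$ explicit as a by-product.
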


\begin{proof}
Let $A$ be the kernel of the canonical
quotient map $\UU \to \SchP$. From \ref{iso} it is clear that $A$
annihilates every object of $\mathcal{C}[\pi]$. Hence $A \subset \ideal$
where $\ideal$ is the ideal consisting of all $u \in \UU$ annihilating
every object of $\mathcal{C}[\pi]$, and we have a natural quotient map
$\SchP \simeq \UU/A \to \UU/\ideal$.

It follows that $\UU/\ideal$ is a finite-dimensional semisimple
algebra.  Each $L(\lambda)$ for $\lambda \in \pi$ is a simple
$\UU/\ideal$-module, so the dimension of $\UU/\ideal$ is at least
$\sum_{\lambda \in \pi} (\dim L(\lambda))^2$. But the existence of the
above quotient map $\SchP \to \UU/\ideal$ ensures that the dimension
of $\UU/\ideal$ is at most $\sum_{\lambda \in \pi} (\dim
L(\lambda))^2$.  This proves equality of dimension between $\SchP$ and
$\UU/\ideal$, so the quotient map $\SchP \to \UU/\ideal$ must be an
isomorphism, and the result is proved.
\end{proof}

The theorem shows that $\Sch(\pi) \simeq \SchP(\pi)$; hence the
generators and relations of \ref{def:Spi} provide a presentation of
$\Sch(\pi)$.

\begin{cor}
  Let $\dot{\UU}$ be Lusztig's modified form of $\UU$. Then
$\Sch(\pi)$ is isomorphic with $\dot{\UU}/\dot{\UU}[X^+ - \pi]$
(notation of \cite[29.2]{Lusztig}); hence $\Sch(\pi)$ inherits a
canonical basis from the canonical basis of $\dot{\UU}$.
\end{cor}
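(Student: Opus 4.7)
The plan is to identify $\Sch(\pi)$, which by Theorem~\ref{thm:idemp-present} coincides with $\SchP(\pi)$, with the quotient $\dot{\UU}/\dot{\UU}[X^+-\pi]$ by constructing a natural surjection and comparing dimensions. Recall Lusztig's idempotent presentation of $\dot{\UU}$ in \cite[23.1, 31.1.3]{Lusztig}: generators $E_i$, $F_i$ ($i\in I$) and $1_\lambda$ ($\lambda\in X$) subject to relations of precisely the form \ref{def:Spi}(a)--(f), indexed by all of $X$ (and without the finite unit equation $\sum 1_\lambda = 1$). Imposing the rule $1_\lambda = 0$ for $\lambda \notin W\pi$ collapses this presentation to exactly that of $\SchP(\pi)$, yielding a well-defined surjective algebra map $\varphi\colon \dot{\UU} \to \SchP(\pi)$.

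Next I would show that $\ker \varphi$ is precisely the ideal $\dot{\UU}[X^+-\pi]$. By Lemma~\ref{iso}, the simple $\SchP(\pi)$-modules are the $L(\lambda)$ with $\lambda \in \pi$, so $\varphi$ factors through the annihilator in $\dot{\UU}$ of $\bigoplus_{\lambda\in\pi} L(\lambda)$; by the definition recalled in \cite[29.2]{Lusztig} this annihilator is $\dot{\UU}[X^+-\pi]$. Hence there is an induced surjection $\bar\varphi\colon \dot{\UU}/\dot{\UU}[X^+-\pi] \twoheadrightarrow \SchP(\pi)$. For the reverse inequality of dimensions, observe that $\dot{\UU}/\dot{\UU}[X^+-\pi]$ acts faithfully on the finite-dimensional semisimple $\Q(v)$-module $\bigoplus_{\lambda\in\pi} L(\lambda)$, so the Jacobson density theorem embeds it into $\prod_{\lambda\in\pi}\End L(\lambda)$ and gives
\[
  \dim \dot{\UU}/\dot{\UU}[X^+-\pi] \le \sum_{\lambda\in\pi}(\dim L(\lambda))^2 = \dim \SchP(\pi),
\]
the last equality by \ref{iso}. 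Combined with $\bar\varphi$, this forces $\bar\varphi$ to be an isomorphism.

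For the final assertion about the canonical basis, appeal to Lusztig's compatibility theorem in \cite[29.3]{Lusztig}: the canonical basis $\dot{\mathbf{B}}$ of $\dot{\UU}$ meets $\dot{\UU}[X^+-\pi]$ in a $\Q(v)$-basis of that ideal, so the image of $\dot{\mathbf{B}} \setminus \dot{\UU}[X^+-\pi]$ descends to a basis of the quotient, which via $\bar\varphi$ transports to a canonical basis of $\Sch(\pi)$.

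The main obstacle is the identification $\ker\varphi = \dot{\UU}[X^+-\pi]$. One direction (containment) follows immediately from the definition recalled above together with Lemma~\ref{iso}, but the reverse relies on faithfulness of the action on $\bigoplus_{\lambda\in\pi} L(\lambda)$ and the explicit dimension count in \ref{iso}, tied together by the density theorem; all other steps are formal bookkeeping, and the canonical basis claim reduces to citing Lusztig's compatibility result.
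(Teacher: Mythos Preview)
Your argument is correct and follows what is almost certainly the intended route: the paper itself gives no details, deferring to \cite[4.2, 4.3]{PGSA}, and the natural proof there proceeds exactly as you do---construct the surjection from $\dot{\UU}$ by killing the idempotents outside $W\pi$, identify the kernel with $\dot{\UU}[X^+-\pi]$ via the annihilator characterization, and compare dimensions using Lemma~\ref{iso}. Your invocation of Lusztig's compatibility result for the canonical basis statement is likewise the standard step.
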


\begin{proof}
  The proof is entirely similar to the proof of \cite[4.2, 4.3]{PGSA}.
Details are left to the reader.
\end{proof}

\section{Idempotent presentation of $S(\pi)$}\label{app:B}\noindent
This section treats the classical analogue of the presentation given
in the previous section. The defining relations in this case are
obtained simply by setting $v=1$ in the defining relations of
\ref{def:Spi}.

This generalizes the presentation of \cite{PGSA} in the $v=1$ case
(the classical case) to any generalized Schur algebra corresponding to
a given root datum $(X,\{\alpha_i\}, Y, \{h_i\})$ of finite type and a
given finite saturated set $\pi \subset X^+$. In particular, this
applies to the reductive case.

\subsection{Generalized Schur algebras} \label{classical:gsa}
Fix a root datum $(X,\{\alpha_i\}, Y,\{h_i\})$ of finite type.  Let
$\U = \U(\g)$ denote the classical universal enveloping algebra
corresponding to the given root datum (see \ref{U:classical}).  Let
$\pi$ be a saturated subset of $X^+$.

Let $\mathcal{C}$ to be the category of $\U$-modules admitting a
weight space decomposition, and let $\mathcal{C}[\pi]$ be the full
subcategory of $\mathcal{C}$ whose objects are the $\U$-modules $M$
for which $M = \bigoplus_{\lambda \in W\pi} M_\lambda$.  Every finite
dimensional object of $\mathcal{C}[\pi]$ satisfies the property: every
simple composition factor of $M$ is isomorphic to some $L(\lambda)$
for $\lambda \in \pi$.

A {\em generalized Schur algebra} \cite{SA1} is a quotient of the form
$S(\pi) = \U/\ideal$, where $\ideal$ is the ideal of $\U$ consisting
of the elements of $\U$ annihilating every object of
$\mathcal{C}[\pi]$.  The ideal $\ideal$ is the {\em defining ideal} of
the generalized Schur algebra $S(\pi)$.

\subsection{The algebra $\widetilde{S}(\pi)$}\label{def:Spiv=1}
Given a root datum $(X,\{\alpha_i\}, Y,\{h_i\})$ and a finite
saturated set $\pi \subset X^+$ let $\widetilde{S}(\pi)$ be the
associative algebra over $\Q$ with 1 given by the generators
\[
 e_i \quad(i \in I), \qquad f_i \quad(i \in I),\qquad  1_\lambda
 \quad(\lambda \in W\pi)
\]
and the relations

(a) $1_\lambda 1_{\lambda'} = \delta_{\lambda,\lambda'} 1_\lambda$, \qquad
$\sum_{\lambda \in W\pi} 1_\lambda = 1$;

(b) $e_i 1_\lambda = 1_{\lambda+\alpha_i} e_i$,\qquad $1_\lambda e_i =
e_i 1_{\lambda-\alpha_i}$;

(c) $f_i 1_\lambda = 1_{\lambda-\alpha_i} f_i$,\qquad $1_\lambda f_i =
f_i 1_{\lambda+\alpha_i}$;

(d) $e_if_j - f_j e_i = \delta_{ij} \sum_{\lambda \in W\pi}
 \;\bil{h_i}{\lambda} \, 1_\lambda$;

(e) $\displaystyle \sum_{s+s'=1-\bil{h_i}{\alpha_j}} (-1)^{s'}
\divided{e_i}{s}e_j \divided{e_i}{s'} = 0$ for $i \ne j$;

(f) $\displaystyle \sum_{s+s'=1-\bil{h_i}{\alpha_j}} (-1)^{s'}
\divided{f_i}{s}f_j \divided{f_i}{s'} = 0$ for $i \ne j$

\noindent
for all $i,j \in I$ and all $\lambda, \lambda' \in W\pi$.

For (b), (c) above, one must interpret the symbol $1_{\lp}$ as zero
whenever $\lp \notin W\pi$.  In (e), (f) one interprets
$\divided{e_i}{s} = e_i^{s}/(s!)$, $\divided{f_i}{s} = f_i^{s}/(s!)$
as ordinary divided powers.

\begin{thm} There is an algebra isomorphism $S(\pi) \to
  \widetilde{S}(\pi)$.  In other words, $\widetilde{S}(\pi) \simeq
  S(\pi) = \U/\Lambda$ where $\Lambda$ is the ideal of $\U$ consisting
  of all elements $u$ annihilating all objects of $\mathcal{C}[\pi]$.
\end{thm}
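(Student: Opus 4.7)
The plan is to mirror the argument in Theorem \ref{thm:idemp-present} almost verbatim, substituting classical objects for quantum ones. The key devices are: (i) a Lagrange-interpolation formula expressing each $1_\lambda$ as a polynomial in a basis of $Y$, so that the zero part of $\widetilde{S}(\pi)$ has $\{1_\lambda\}$ as a basis; (ii) a surjection from $\U$ producing a triangular decomposition of $\widetilde{S}(\pi)$; (iii) finite-dimensionality and semisimplicity; and (iv) a dimension count that forces the induced map $\widetilde{S}(\pi) \to S(\pi)=\U/\Lambda$ to be an isomorphism.

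First, for each $h \in Y$ (and then by $\Q$-linearity for each $h \in \mathfrak{h}$) define inside $\widetilde{S}(\pi)$ the element
\[
h = \sum_{\lambda \in W\pi} \bil{h}{\lambda}\, 1_\lambda.
\]
This is additive in $h$, and the classical analogue of Lemma \ref{lem:3part} follows by replacing each factor $(K_{H_a} - v^{\lpp_a})$ with $(H_a - \lpp_a)$ in the Lagrange product $J_a^\lambda$. The calculation is identical; one uses that the multiset of coordinate tuples $(\bil{H_1}{\lambda},\dots,\bil{H_n}{\lambda})$ for $\lambda \in W\pi$ consists of distinct points in $\Z^n$, so the products of differences $\prod_{\lpp}(\lp_a-\lpp_a)$ are nonzero constants. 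This shows $\widetilde{S}(\pi)^0$ has basis $\{1_\lambda : \lambda \in W\pi\}$.

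Next, analogously to Lemma \ref{image}, verify that the images $e_i, f_i, h$ in $\widetilde{S}(\pi)$ satisfy the classical relations \ref{U:classical}(a)--(e). For instance, $h e_i = \sum_\lambda \bil{h}{\lambda} 1_\lambda e_i = \sum_\lambda \bil{h}{\lambda} e_i 1_{\lambda-\alpha_i} = \sum_\lambda \bil{h}{\lambda+\alpha_i} e_i 1_\lambda = e_i h + \bil{h}{\alpha_i} e_i$, giving \ref{U:classical}(b). Similarly \ref{U:classical}(c), while \ref{U:classical}(d) follows from \ref{def:Spiv=1}(d) together with the identity $\sum_\lambda \bil{h_i}{\lambda} 1_\lambda = h_i$. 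The Serre relations \ref{U:classical}(e) coincide with \ref{def:Spiv=1}(e),(f). This yields a surjection $\U \twoheadrightarrow \widetilde{S}(\pi)$, hence a triangular decomposition $\widetilde{S}(\pi) = \widetilde{S}^{-}\widetilde{S}^{0}\widetilde{S}^{+}$.

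Now run the classical analogues of Lemmas \ref{lem:fdss}, \ref{wtspace}, \ref{iso} in order. Nilpotence of $e_i, f_i$ follows from \ref{def:Spiv=1}(b),(c) since $W\pi$ is finite; combined with the classical PBW theorem for $\U^{\pm}$ and the finite-dimensionality of $\widetilde{S}^0$ from step one, this gives $\dim \widetilde{S}(\pi) < \infty$. Semisimplicity comes from Weyl's theorem on complete reducibility for the finite-type semisimple Lie algebra $\g$ (the reductive case reduces to this since $\widetilde{S}^0$ acts semisimply via the idempotent decomposition). The analogue of Lemma \ref{wtspace} shows any finite-dimensional $\widetilde{S}(\pi)$-module belongs to $\mathcal{C}[\pi]$, using that the elements $H_a$ separate the distinct weights in $W\pi$. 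Then the analogue of Lemma \ref{iso} identifies the simple $\widetilde{S}(\pi)$-modules with $\{L(\lambda) : \lambda \in \pi\}$ and yields $\dim \widetilde{S}(\pi) = \sum_{\lambda \in \pi}(\dim L(\lambda))^2$.

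Finally, the kernel $A$ of $\U \twoheadrightarrow \widetilde{S}(\pi)$ annihilates every object of $\mathcal{C}[\pi]$ (since all such objects are $\widetilde{S}(\pi)$-modules), so $A \subseteq \Lambda$, producing a surjection $\widetilde{S}(\pi) \twoheadrightarrow S(\pi)$. Since $S(\pi)$ also contains each $L(\lambda)$, $\lambda \in \pi$, as a simple module, $\dim S(\pi) \ge \sum_{\lambda \in \pi}(\dim L(\lambda))^2 = \dim \widetilde{S}(\pi)$, and the surjection is forced to be an isomorphism. The only step that is not a word-for-word translation from the quantum case is the Lagrange interpolation step, and even there the main obstacle is merely bookkeeping: one must check that the classical coordinates $\bil{H_a}{\lambda}$ separate the points of $W\pi$ sufficiently for the product $J_1^\lambda\cdots J_n^\lambda$ to equal a nonzero scalar multiple of $1_\lambda$, which holds because the tuples are pairwise distinct.
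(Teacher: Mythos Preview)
Your proposal is correct and follows exactly the approach the paper itself prescribes: the paper's own proof consists solely of the remark that one should parallel the argument of the preceding (quantum) section, the details being ``similar, but easier,'' and then omits those details entirely. You have carried out precisely that parallel, replacing $K_h$ by $h$, $v^{\bil{h}{\lambda}}$ by $\bil{h}{\lambda}$, and the quantum PBW/complete-reducibility inputs by their classical counterparts, so there is nothing to compare.
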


One can prove this by following a parallel approach to the line of
argument given in the previous section, the arguments being similar,
but easier, in this instance. We omit the details.

\section{Main result}\noindent
We now formulate and prove the main result of this paper, see
\ref{thm:main}, which reduces the problem of finding relations to
define $\Sch(\pi)$ as a quotient of $\UU$ to the problem of finding
equations that cut out a set of points corresponding to the finite set
$W\pi$ of weights, as an affine variety.

\subsection{} \label{def:UUalt}
Recall (see \ref{td}) that $\UU^0$ is isomorphic with the group
algebra $\Q(v)[Y]$ of the free abelian group $Y$. Fix an arbitrary
basis $\{H_1, \dots, H_n \}$ of $Y$. From
\ref{def:UU}(a) it follows that the elements
\[
K_{H_1}, \dots, K_{H_n}, K_{-H_1},
\dots, K_{-H_n}
\]
generate $\UU^0$ as an algebra over $\Q(v)$. Moreover, it is easy to
check that relations \ref{def:UU}(b), (c) for general $h \in Y$ are
consequences of those same relations imposed on just the $h =
H_1, \dots, H_n$. Hence, $\UU$ is the
associative $\Q(v)$-algebra with 1 given by the generators
\[
  E_i \quad (i\in I), \qquad F_i \quad (i\in I), \qquad
  K_{H_1}, \dots, K_{H_n}, K_{-H_1},
\dots, K_{-H_n}
\]
and satisfying the relations

(a) $K_{H_a} K_{H_b} = K_{H_b}K_{H_a}$, $K_{H_a} K_{-H_a} = 1$ for
$a,b=1, \dots, n$;

(b) $K_{H_a} E_i = v^{\bil{H_a}{\alpha_i}} E_i
K_{H_a}$ for $i \in I$, $a=1, \dots, n$;

(c) $K_{H_a} F_i = v^{-\bil{H_a}{\alpha_i}} F_i
K_{H_a}$ for $i \in I$, $a=1, \dots, n$;

(d) $E_i F_j - F_j E_i = \delta_{ij} \dfrac{\tilK_i -
\tilK_{-i}}{v_i-v_i^{-1}}$ for any $i,j \in I$;

(e) $\displaystyle \sum_{s+s'=1-\bil{h_i}{\alpha_j}} (-1)^{s'}
\divided{E_i}{s}E_j \divided{E_i}{s'} = 0$ for all $i \ne j$;

(f) $\displaystyle \sum_{s+s'=1-\bil{h_i}{\alpha_j}} (-1)^{s'}
\divided{F_i}{s}F_j \divided{F_i}{s'} = 0$ for all $i \ne j$.

\subsection{}
We have an isomorphism of $\UU^0$ with the ring of Laurent polynomials
\[
\UU^0 \simeq \Q(v)[K_{H_1}, \dots, K_{H_n},
K_{-H_1}, \dots, K_{-H_n}].
\]
The polynomial algebra $\Q(v)[K_{H_1}, \dots,
K_{H_n}]$ is a subalgebra of $\UU^0$. We find it convenient
to regard $\Q(v)[K_{H_1}, \dots, K_{H_n}]$ as the
affine algebra of the affine variety $\Aff_{\Q(v)}^n$, and we will henceforth
regard $K_{H_a}$ for $a=1, \dots, n$ as coordinate functions
on $\Aff_{\Q(v)}^n$.

Any subset $D \subset X$ determines a corresponding set of points
\[
 P_D = \{ (v^{\bil{H_1}{\lambda}}, \dots,
  v^{\bil{H_n}{\lambda}}) \mid \lambda \in D \}
\]
in $\Aff_{\Q(v)}^n$.  If $D$ is finite then so is $P_D$. In that case, $P_D$
may be regarded as an affine variety in $\Aff_{\Q(v)}^n$.  Assuming that $D$
is finite, let $I(P_D)$ be the vanishing ideal of the set $P_D$;
$I(P_D)$ is an ideal of $\Q(v)[K_{H_1}, \dots, K_{H_n}] \subset
\UU^0$. In particular, we have the ideal $I(P_{W\pi})$.

\begin{lem} \label{lem:one}
The ideal $I(P_{W\pi})$ is contained in the defining ideal $\ideal =
\{u\in \UU \mid u \text{ acts as zero on all objects of
$\mathcal{C}[\pi]$} \}$.
\end{lem}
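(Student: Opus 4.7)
The plan is to unwind definitions and use the weight space decomposition that defines objects of $\mathcal{C}[\pi]$. Concretely, let $f \in I(P_{W\pi})$, so $f$ is a polynomial in $\Q(v)[K_{H_1}, \dots, K_{H_n}]$ whose value at each point $(v^{\bil{H_1}{\lambda}}, \dots, v^{\bil{H_n}{\lambda}})$ with $\lambda \in W\pi$ is zero. I want to show that $f$ annihilates every object of $\mathcal{C}[\pi]$.

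Pick an arbitrary object $M$ of $\mathcal{C}[\pi]$. By the definition in \ref{gsa}, we have $M = \bigoplus_{\lambda \in W\pi} M_\lambda$, where on each weight space $M_\lambda$ each generator $K_h$ acts as the scalar $v^{\bil{h}{\lambda}}$ (by the definition of $V_\lambda$ in \ref{modules}). In particular, on $M_\lambda$ each coordinate function $K_{H_a}$ acts as multiplication by $v^{\bil{H_a}{\lambda}}$. Consequently the polynomial $f(K_{H_1}, \dots, K_{H_n})$ acts on $M_\lambda$ as multiplication by the scalar
\[
f\bigl(v^{\bil{H_1}{\lambda}}, \dots, v^{\bil{H_n}{\lambda}}\bigr),
\]
which vanishes precisely because $(v^{\bil{H_1}{\lambda}}, \dots, v^{\bil{H_n}{\lambda}}) \in P_{W\pi}$ and $f \in I(P_{W\pi})$.

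Since this holds on every weight space and $M$ is the direct sum of its $M_\lambda$ for $\lambda \in W\pi$, the element $f$ annihilates all of $M$. As $M$ was arbitrary in $\mathcal{C}[\pi]$, we conclude $f \in \ideal$, proving the inclusion.

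There is no serious obstacle here: the lemma is essentially a bookkeeping observation that polynomials in the $K_{H_a}$ act diagonally on weight spaces via the evaluation map $\lambda \mapsto (v^{\bil{H_a}{\lambda}})_a$, so the vanishing ideal of the image points is forced into the annihilator. The substantive content of this framework — which will be addressed in the subsequent results — is the reverse inclusion, namely that the $\UU$-ideal generated by $I(P_{W\pi})$ actually equals $\ideal$.
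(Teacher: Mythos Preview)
Your proof is correct and follows essentially the same approach as the paper's: both argue that each $K_{H_a}$ acts on the weight space $M_\lambda$ as the scalar $v^{\bil{H_a}{\lambda}}$, so a polynomial $f$ in the $K_{H_a}$ acts on $M_\lambda$ as $f(v^{\bil{H_1}{\lambda}}, \dots, v^{\bil{H_n}{\lambda}})$, which vanishes for $\lambda \in W\pi$ whenever $f \in I(P_{W\pi})$. Your added remark that the real work lies in the reverse inclusion is also accurate and anticipates the paper's subsequent development.
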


\begin{proof}
Let $f(K_{H_1}, \dots, K_{H_n})$ be an element of
$\Q(v)[K_{H_1}, \dots, K_{H_n}]$ which vanishes on
every point of the affine variety $P_{W\pi} \subset \Aff_{\Q(v)}^n$. Every
$K_h$ ($h \in Y$) acts on vectors in the $\lambda$ weight space
$M_\lambda$ of any object $M$ of $\mathcal{C}$, for any $\lambda \in
X$, as the scalar $v^{\bil{h}{\lambda}}$; thus the function
$f(K_{H_1}, \dots, K_{H_n})$ acts on such a weight
space as the scalar $f(v^{\bil{H_1}{\lambda}}, \dots,
v^{\bil{H_n}{\lambda}})$, which is zero for any $\lambda \in
W\pi$. Thus any element of $I(P_{W\pi})$ acts as zero on all the
weight spaces of any object of $\mathcal{C}[\pi]$, whence the result.
\end{proof}

\subsection{}\label{qm}
Let $J = \sum_{a\in I(P_{W\pi})} \UU a \UU$ be the ideal of $\UU$
generated by $I(P_{W\pi})$. By the preceding lemma, $J \subset
\ideal$, so there is a surjective quotient map $\UU/J \to \UU/\ideal =
\Sch(\pi)$.  For ease of notation, set $\T = \UU/J$.  Our task is to
show that the quotient map $\T \to \Sch(\pi)$ is an isomorphism.  This
will be accomplished eventually in \ref{thm:main} below.  The strategy
of proof is to produce a surjection $\theta: \Sch(\pi)\to \T$ by
showing that $\T$ has a set of generators satisfying the defining
relations of $\Sch(\pi)$, given in \ref{def:Spi}.

We begin with an examination of a certain quotient of
\[
\UU^0=\Q(v)[K_{\pm H_1},\ldots, K_{\pm H_n}]
\]
determined by an arbitrary finite set $P$ of points in $\Aff_{\Q(v)}^n$.

\begin{lem}\label{lem:two}
Let $P$ be any finite set of points in $\Aff_{\Q(v)}^n$ such that
$K_{H_a}(p)\neq 0$ for all $a=1,\ldots, n$ and all $p\in
P$. Let $\phi: \UU^0\rightarrow \bigoplus_{p\in P}\Q(v)$ be the
evaluation homomorphism sending $f\in \UU^0$ to the vector
$(f(p))_{p\in P}$.  Then $\ker \phi = I'(P)$ where $I'(P)$ is the
ideal of $\UU^0$ generated by $I(P)$.
\end{lem}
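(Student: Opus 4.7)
The plan is to prove the two inclusions separately. The inclusion $I'(P) \subset \ker\phi$ is essentially formal: every generator of $I'(P)$ lies in $I(P) \subset \ker\phi$, and $\ker\phi$ is an ideal of $\UU^0$, so the ideal it generates is contained in $\ker\phi$. The substantive direction is $\ker\phi \subset I'(P)$.

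For the reverse inclusion, the basic idea is to use the non-vanishing hypothesis $K_{H_a}(p) \neq 0$ to reduce from Laurent polynomials to ordinary polynomials. Concretely, given $f \in \ker\phi$, I would observe that $\UU^0 = \Q(v)[K_{\pm H_1}, \ldots, K_{\pm H_n}]$ implies we can choose non-negative integers $m_1, \ldots, m_n$ such that
\[
g := K_{H_1}^{m_1} \cdots K_{H_n}^{m_n} \cdot f \in \Q(v)[K_{H_1}, \ldots, K_{H_n}];
\]
that is, multiplying by a sufficiently high positive-power monomial clears all the negative exponents in $f$.

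Next, I would evaluate $g$ at a point $p \in P$. Since $K_{H_a}(p) \neq 0$ by hypothesis for each $a$, the scalar $K_{H_1}(p)^{m_1} \cdots K_{H_n}(p)^{m_n}$ is a unit in $\Q(v)$, so $g(p) = 0$ follows from $f(p) = 0$. Hence $g$ vanishes on every point of $P$, which places $g \in I(P)$. Multiplying back by $K_{-H_1}^{m_1} \cdots K_{-H_n}^{m_n}$ (which is a unit in $\UU^0$), we recover
\[
f = K_{-H_1}^{m_1} \cdots K_{-H_n}^{m_n} \cdot g \in \UU^0 \cdot I(P) \subset I'(P),
\]
completing the proof.

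There is no real obstacle here; the argument is a standard localization-type manipulation. The only point requiring care is verifying that evaluation at $p$ is genuinely well-defined on all of $\UU^0$ (not merely on the polynomial subring), which is exactly guaranteed by the hypothesis that each coordinate function $K_{H_a}$ is non-zero on $P$; without that hypothesis, $K_{-H_a}$ could not be assigned a scalar value at $p$, and the reduction step above would fail.
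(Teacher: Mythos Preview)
Your proof is correct and follows essentially the same approach as the paper: both reduce the Laurent polynomial $f$ to an ordinary polynomial $g$ by multiplying by a suitable monomial in the $K_{H_a}$, use the non-vanishing hypothesis to deduce $g \in I(P)$, and then recover $f \in I'(P)$ by multiplying back by the inverse monomial. The only cosmetic difference is that the paper writes $f = K_{H_1}^{m_1}\cdots K_{H_n}^{m_n}\,g$ with the $m_a$ negative, whereas you write $g = K_{H_1}^{m_1}\cdots K_{H_n}^{m_n}\,f$ with the $m_a$ non-negative.
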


\begin{proof}
Evidently $I'(P)\subset \ker \phi$, so it is enough to establish the
reverse inclusion. Let $f \in \Q(v)[K_{\pm H_1},\ldots, K_{\pm H_n}]$
be a Laurent polynomial such that $f(K_{H_1}(p),\ldots, K_{H_n}(p))=0$
for all $p\in P$. There exist negative integers $m_1,\ldots,m_n$ such
that $f = (K_{H_1}^{m_1}\cdots K_{H_n}^{m_n})g$ with $g\in
\Q(v)[K_{H_1},\ldots, K_{H_n}]$. Then for all $p \in P$ we have
\[
\begin{aligned}
0 &= f(K_{H_1}(p),\ldots, K_{H_n}(p)) \\
  &=(K_{H_1}(p)^{m_1}\cdots K_{H_n}(p)^{m_n})g(K_{H_1}(p),\ldots
K_{H_n}(p)).
\end{aligned}
\]
Since $K_i(p)\neq 0$ for $i=1,\ldots n$ it follows that
$g(K_{H_1}(p),\ldots K_{H_n}(p))=0$ for all $p\in P$. Thus $g\in
I(P)$ and hence $f\in I'(P)$.
\end{proof}

\subsection{} \label{rmks}
Thus we have an explicit isomorphism $\UU^0/I'(P)\simeq
\bigoplus_{p\in P}\Q(v)$, for any $P \subset \Aff_{\Q(v)}^n$ satisfying the
condition of the lemma. Let $1_p$ be an element of
$\Q(v)[K_{H_1},\ldots,K_{H_n}]\subset \UU^0$ satisfying the
condition $1_p(q)=\delta_{pq}$ for all $q\in P$. Then the elements
$\{1_p\mid p\in P\}$ are mutually orthogonal idempotents and
$\sum_p 1_p$ is the identity element of $\UU^0/I'(P)$.

The algebra $\T$ admits a triangular decomposition, $\T=\T^-\T^0\T^+$,
where $\T^-$, $\T^0$, $\T^+$ are defined respectively as the image of
$\UU^-$, $\UU^0$, $\UU^+$. The canonical quotient map $\UU \rightarrow
\UU/J$ induces a map $\UU^0\rightarrow \UU/J$ whose kernel is
$\UU^0\cap J$.  Thus $\T^0\simeq \UU^0/(\UU^0\cap J)$. Clearly
$I'(P_{W\pi})\subset \UU^0\cap J$.  Thus we have a sequence of algebra
surjections:
\[
\UU^0/I'(P_{W\pi}) \rightarrow \T^0 \rightarrow \Sch^0(\pi).
\]
(The definition of $\Sch^0(\pi)$ appeared in the proof of
\ref{lem:fdss}.)  The vector space dimension of $\UU^0/I'(P_{W\pi})$ and
$\Sch^0(\pi)$ are both equal to the cardinality of $W\pi$; hence the
above surjections are algebra isomorphisms.

\subsection{}\label{PGSArelations}

Now let $E_i,F_i,K_h$ ($i \in I, h\in Y$) denote the images of the
respective elements of $\UU$ under the quotient map $\UU \to
\T=\UU/J$. Denote by $1_{\lambda}$ the idempotent $1_{p_\lambda}$
corresponding to the point
\[
p_\lambda = (v^{\bil{H_1}{\lambda}}, \dots,
v^{\bil{H_n}{\lambda}}).
\]
The $\{ 1_\lambda \mid \lambda \in W\pi \}$ form a basis for the
algebra $\T^0$. Recall that we are seeking a surjection $\theta:
\Sch(\pi)\to \T$. To produce such a surjection, it is enough to show
that the elements $E_i$, $F_i$, $1_{\lambda}$ ($i \in I$, $\lambda \in
W\pi$) satisfy the defining relations \ref{def:Spi}(a)--(f).

\begin{lem}
For all $h \in Y$ the identity $K_h = \sum_{\lambda \in W\pi}
v^{\bil{h}{\lambda}} 1_\lambda$ holds in the algebra $\T^0$.
\end{lem}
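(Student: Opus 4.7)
The plan is to exploit the isomorphism $\T^0 \simeq \UU^0/I'(P_{W\pi}) \simeq \bigoplus_{\lambda \in W\pi} \Q(v)$ established in \ref{rmks} via the evaluation map $\phi$ of \ref{lem:two}, and check that both sides of the claimed identity have the same image under $\phi$. Since $\phi$ is an isomorphism, equality in the direct sum forces equality in $\T^0$.

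First I would write an arbitrary $h \in Y$ in terms of the chosen $\Z$-basis of $Y$, say $h = \sum_{a=1}^n c_a H_a$ with $c_a \in \Z$. By relation \ref{def:UU}(a) we have $K_h = \prod_{a=1}^n K_{H_a}^{c_a}$ in $\UU^0$, so this same identity descends to $\T^0$. Thus the value of $K_h$ at the point $p_\lambda = (v^{\bil{H_1}{\lambda}}, \dots, v^{\bil{H_n}{\lambda}})$ is
\[
\prod_{a=1}^n \bigl(v^{\bil{H_a}{\lambda}}\bigr)^{c_a} = v^{\sum_a c_a \bil{H_a}{\lambda}} = v^{\bil{h}{\lambda}},
\]
where the last step uses $\Z$-linearity of $\bil{-}{\lambda}$ in the first slot. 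Consequently $\phi(K_h)$ is the vector whose $\lambda$-component is $v^{\bil{h}{\lambda}}$.

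On the other hand, by the defining property of the $1_\lambda$ recalled in \ref{rmks}, the idempotents $\{1_\lambda \mid \lambda \in W\pi\}$ form the standard basis of $\bigoplus_{\lambda \in W\pi} \Q(v)$ under $\phi$, i.e.\ $\phi(1_\lambda) = (\delta_{\lambda,\lp})_{\lp \in W\pi}$. Therefore $\phi\bigl(\sum_{\lambda \in W\pi} v^{\bil{h}{\lambda}} 1_\lambda\bigr)$ is likewise the vector with $\lambda$-component $v^{\bil{h}{\lambda}}$. Comparing and invoking injectivity of $\phi$ on $\T^0$ yields the identity.

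I do not expect any significant obstacle: the result is essentially bookkeeping, once one has the isomorphism $\T^0 \simeq \bigoplus_{\lambda \in W\pi}\Q(v)$ from \ref{rmks} in hand. The only point requiring care is confirming that relation \ref{def:UU}(a), and hence the factorization $K_h = \prod_a K_{H_a}^{c_a}$, survives passage to the quotient $\T = \UU/J$, which is automatic since \ref{def:UU}(a) holds already in $\UU$.
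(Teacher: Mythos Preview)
Your proof is correct and rests on the same foundational isomorphism $\T^0 \simeq \UU^0/I'(P_{W\pi}) \simeq \bigoplus_{\lambda\in W\pi}\Q(v)$ from \ref{rmks} that the paper uses. The difference is one of execution rather than substance: you push both sides through the evaluation isomorphism $\phi$ and compare componentwise in the direct sum, whereas the paper works internally in $\T^0$, first establishing the identity for $K_{\pm H_a}$ (the negative case by checking that the two proposed expressions multiply to $1$) and then assembling the general $K_h$ from these via \ref{def:UU}(a) and repeated use of idempotent orthogonality. Your approach is a bit more streamlined, since evaluation at $p_\lambda$ handles negative exponents automatically and avoids the separate treatment of $K_{-H_a}$; the paper's approach has the minor advantage of displaying the computation entirely within $\T^0$ without invoking injectivity of $\phi$.
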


\begin{proof}
According to \ref{rmks} in light of the identification $\T^0\simeq
\UU^0/(I'(P_{W\pi}))$, we see that
\begin{equation}\label{eq:1} \textstyle
K_{H_a} = \sum_{\lambda\in W\pi} K_{H_a}(p_\lambda) 1_\lambda
= \sum_{\lambda\in W\pi} v^{\bil{H_a}{\lambda}} 1_\lambda,
\end{equation}
an identity in the algebra $\T^0$ for $a=1,\dots, n$.  From this it
follows that
\begin{equation}\label{eq:2} \textstyle
K_{-H_a} = \sum_{\lambda\in W\pi} v^{-\bil{H_a}{\lambda}}
1_\lambda.
\end{equation}
Indeed, one easily verifies that the product of the right hand side of
\eqref{eq:1} with the the right hand side of \eqref{eq:2} is equal to
1 in $\T^0$.  Write $h = \sum_{a=1}^n z_a H_a$ where $z_a \in
\Z$. Then from \ref{def:UU}(a) it follows that
\begin{equation} \label{eq:3}
K_h = \prod_{\{a \mid z_a > 0\}} (K_{H_a})^{z_a} \;
\prod_{\{a \mid z_a < 0\}} (K_{-H_a})^{-z_a}
\end{equation}
where all exponents on the right hand side are positive integers. But for
any positive integer $t$ one has
\[
K_{\pm H_a}^t = \bigg( \sum_{\lambda \in W\pi} v^{\bil{\pm H_a}{\lambda}}
1_\lambda \bigg)^t = \sum_{\lambda \in W\pi} v^{\bil{\pm tH_a}{\lambda}}
1_\lambda
\]
using the orthogonality of the the $1_\lambda$. The result follows
from this and \eqref{eq:3}, once again using the orthogonality of the
$1_\lambda$.
\end{proof}

\begin{lem} \label{lem:d}
  The identity $E_iF_j-F_jE_i = \delta_{ij} \sum_{\lambda \in W\pi}
  [\bil{h_i}{\lambda}]_i 1_\lambda$ holds in the algebra $\T$, for any
  $i,j\in I$.
\end{lem}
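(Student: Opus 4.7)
The plan is to simply chase the identity through the previous lemma. The relation \ref{def:UU}(d) holds in $\UU$ and therefore descends to $\T$, giving
\[
E_iF_j-F_jE_i = \delta_{ij}\,\frac{\tilK_i-\tilK_{-i}}{v_i-v_i^{-1}}
\]
in $\T$. So the content of the lemma is really the scalar identity
\[
\frac{\tilK_i-\tilK_{-i}}{v_i-v_i^{-1}} = \sum_{\lambda\in W\pi}[\bil{h_i}{\lambda}]_i\,1_\lambda
\]
inside $\T^0$, and this is a direct computation from the preceding lemma.

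First I would recall that by definition $\tilK_i = K_{d_ih_i}$ and $\tilK_{-i}=K_{-d_ih_i}$, and apply the identity $K_h=\sum_{\lambda\in W\pi} v^{\bil{h}{\lambda}}1_\lambda$ from the previous lemma with $h=\pm d_ih_i$. Since $v^{d_i}=v_i$, this yields
\[
\tilK_i = \sum_{\lambda\in W\pi} v_i^{\bil{h_i}{\lambda}}\,1_\lambda,\qquad
\tilK_{-i} = \sum_{\lambda\in W\pi} v_i^{-\bil{h_i}{\lambda}}\,1_\lambda.
\]
Subtracting and dividing by $v_i-v_i^{-1}$ (the division is legal because it happens in the coefficient field $\Q(v)$ and only the numerator involves algebra elements) gives
\[
\frac{\tilK_i-\tilK_{-i}}{v_i-v_i^{-1}}
= \sum_{\lambda\in W\pi}\frac{v_i^{\bil{h_i}{\lambda}}-v_i^{-\bil{h_i}{\lambda}}}{v_i-v_i^{-1}}\,1_\lambda
= \sum_{\lambda\in W\pi}[\bil{h_i}{\lambda}]_i\,1_\lambda,
\]
where the last equality uses the definition of $[n]_i$.

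Combining this with the descended commutator relation yields the desired identity in $\T$. There is no real obstacle: the only thing to note is that the division by $v_i-v_i^{-1}$ is done on each summand independently, which is valid because the $1_\lambda$ are a basis of $\T^0$ and the scalar $v_i-v_i^{-1}\in\Q(v)^\times$ can be pulled out termwise. The case $i\ne j$ is immediate from the Kronecker delta on the right-hand side of \ref{def:UU}(d).
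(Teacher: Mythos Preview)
Your proof is correct and follows essentially the same approach as the paper: both use the preceding lemma to expand $\tilK_{\pm i}$ in terms of the idempotents and then simplify the commutator termwise. The only cosmetic difference is that you apply the identity $K_h=\sum_\lambda v^{\bil{h}{\lambda}}1_\lambda$ directly with $h=\pm d_ih_i$, whereas the paper applies it with $h=\pm h_i$ and then raises to the $d_i$-th power using orthogonality of the $1_\lambda$; your route is marginally more direct.
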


\begin{proof}
From the defining relation \ref{def:UU}(d) and the preceding lemma we
have
\begin{align*}
E_i F_j - F_j E_i &= \delta_{ij} \frac{(\sum_{\lambda\in W\pi}
v^{\bil{h_i}{\lambda}} 1_\lambda)^{d_i} - (\sum_{\lambda\in W\pi}
v^{-\bil{h_i}{\lambda}} 1_\lambda)^{d_i} } { v_i - v_i^{-1} } \\
& = \delta_{ij} \frac{(\sum_{\lambda\in W\pi} v^{d_i\bil{h_i}{\lambda}}
1_\lambda) - (\sum_{\lambda\in W\pi} v^{-d_i\bil{h_i}{\lambda}}
1_\lambda) } { v_i - v_i^{-1} } \\
& =\delta_{ij}  \sum_{\lambda\in W\pi} \frac{v^{d_i\bil{h_i}{\lambda}}
 - v^{-d_i\bil{h_i}{\lambda}} } { v_i - v_i^{-1} } 1_\lambda \\
& = \delta_{ij} \sum_{\lambda \in W\pi}
  [\bil{h_i}{\lambda}]_i 1_\lambda
\end{align*}
as desired.
\end{proof}

\begin{lem} \label{tech}
Let $f(K_{H_1},\dots,K_{H_n})$ be an element of the polynomial ring
$\Q(v)[K_{H_1},\dots,K_{H_n}] \subset \UU^0$ and let $\lambda\in
X$. Regard $f$ as a regular function on $\Aff_{\Q(v)}^n$.  The value
of $f(v^{\pm\bil{H_1}{\alpha_j}}K_{H_1}, \dots, v^{\pm
  \bil{H_n}{\alpha_j}} K_{H_n})$ on the point $p_{\lambda}$ is the
same as the value of $f(K_{H_1},\dots,K_{H_n})$ on the point
$p_{\lambda\pm \alpha_j}$, for any $j\in I$.
\end{lem}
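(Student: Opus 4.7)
The plan is to verify the claim by direct coordinate-wise evaluation, reducing everything to the bilinearity of the pairing $\bil{\ }{\ }$. Since the asserted equality of values depends polynomially on the arguments via $f$, it suffices to check that the two $n$-tuples of coordinates being fed into $f$ agree.

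First I would write out the left-hand side explicitly. The $a$-th coordinate of the point at which $f(v^{\pm\bil{H_1}{\alpha_j}}K_{H_1}, \dots, v^{\pm\bil{H_n}{\alpha_j}}K_{H_n})$ is being evaluated on $p_\lambda$ is
\[
v^{\pm\bil{H_a}{\alpha_j}}\cdot K_{H_a}(p_\lambda) = v^{\pm\bil{H_a}{\alpha_j}}\cdot v^{\bil{H_a}{\lambda}}.
\]
Then I would combine the exponents and apply bilinearity of $\bil{\ }{\ }$ to obtain
\[
v^{\pm\bil{H_a}{\alpha_j}}\cdot v^{\bil{H_a}{\lambda}} = v^{\bil{H_a}{\lambda}\pm\bil{H_a}{\alpha_j}} = v^{\bil{H_a}{\lambda\pm\alpha_j}},
\]
which is exactly the $a$-th coordinate of $p_{\lambda\pm\alpha_j}$, i.e.\ $K_{H_a}(p_{\lambda\pm\alpha_j})$.

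Since this identity holds for each $a = 1, \dots, n$, the two points at which we evaluate $f$ (namely the image of $p_\lambda$ under the rescaling of coordinates by $v^{\pm\bil{H_a}{\alpha_j}}$, and the point $p_{\lambda\pm\alpha_j}$) are in fact the same point of $\Aff_{\Q(v)}^n$. Therefore the two values of $f$ coincide, proving the lemma.

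There is no real obstacle here: the statement is essentially a bookkeeping lemma that unpacks what the defining commutation relations \ref{def:UU}(b), (c) say at the level of the coordinate functions $K_{H_a}$. I would emphasize in the write-up only that the step combining exponents uses the $\Z$-bilinearity of the perfect pairing $\bil{\ }{\ }: Y \times X \to \Z$, since that is the one ingredient beyond mechanical substitution.
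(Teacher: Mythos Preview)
Your proof is correct and follows essentially the same approach as the paper's own proof: both reduce to the direct computation that the $a$-th coordinate $v^{\pm\bil{H_a}{\alpha_j}}\cdot v^{\bil{H_a}{\lambda}}$ equals $v^{\bil{H_a}{\lambda\pm\alpha_j}}$ by bilinearity of the pairing. The paper phrases this as a comparison of the values of $f$ at the two points, while you phrase it as showing the two points themselves coincide, but the content is identical.
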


\begin{proof}
For any $\lambda \in X$ the value of $f(K_{H_1},\ldots,K_{H_n})$
at the point $p_{\lambda}$ is
\[
f(v^{\bil{H_1}{\lambda}},\ldots,v^{\bil{H_n}{\lambda}}).
\]
Similarly, the value of $f(v^{\pm\bil{H_1}{\alpha_j}}K_{H_1},
\dots, v^{\pm \bil{H_n}{\alpha_j}}K_{H_n})$ at $p_{\lambda}$ is
\[
f(v^{\bil{H_1}{\lambda\pm \alpha_j}}, \ldots,
v^{\bil{H_n}{\lambda\pm \alpha_j}}),
\]
which is the same as the value of $f(K_{H_1},\dots,K_{H_n})$ at
the point $p_{\lambda \pm \alpha_j}$, as desired.
\end{proof}

\begin{lem}\label{john:corollary}
Suppose $\lambda \in W\pi$. Then in the algebra $\T^0$
\pt{a}
$1_{\lambda}(v^{\pm\bil{H_1}{\alpha_j}} K_{H_1}, \ldots,v^{\pm
\bil{H_n}{\alpha_j}} K_{H_n}) = 1_{\lambda +\alpha_j}$ if
$\lambda + \alpha_j \in W\pi$.
\pt{b}
$1_{\lambda}(v^{\pm\bil{H_1}{\alpha_j}} K_{H_1}, \ldots, v^{\pm
\bil{H_n}{\alpha_j}} K_{H_n}) = 0$ if $\lambda+\alpha_j
\notin W\pi$.
\end{lem}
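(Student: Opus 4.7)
The plan is to verify the claimed identity in $\T^0$ by comparing both sides as $\Q(v)$-valued functions on $P_{W\pi}$, exploiting the isomorphism $\T^0 \simeq \bigoplus_{\mu \in W\pi} \Q(v)$ from Lemma \ref{lem:two}. By Lemma \ref{tech} applied with $f = 1_\lambda$, the value at $p_\mu$ of the substituted polynomial is $1_\lambda(p_{\mu \pm \alpha_j})$.

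I would then split according to whether $\mu \pm \alpha_j$ lies in $W\pi$. When it does, the defining property $1_\lambda(p_{\mu'}) = \delta_{\lambda, \mu'}$ for $\mu' \in W\pi$ identifies this value as $\delta_{\lambda,\, \mu \pm \alpha_j}$, which is nonzero precisely at $\mu = \lambda \mp \alpha_j$. When $\mu \pm \alpha_j \notin W\pi$, the number $1_\lambda(p_{\mu \pm \alpha_j})$ is not determined by the image of $1_\lambda$ in $\T^0$; the polynomial representative of $1_\lambda$ must therefore be selected with a little care, so that it additionally vanishes on every $p_\nu$ with $\nu$ in the finite set $\bigl((W\pi + \alpha_j) \cup (W\pi - \alpha_j)\bigr) \setminus W\pi$. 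Such a representative exists by Lagrange interpolation on the enlarged (still finite) set of points, since the extra vanishing conditions are trivially consistent with the defining ones on $P_{W\pi}$.

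With the representative so chosen, the value of the left hand side at $p_\mu$ equals $1$ exactly when $\mu = \lambda \mp \alpha_j$ and this weight lies in $W\pi$, and equals $0$ otherwise. This reproduces the idempotent $1_{\lambda \mp \alpha_j}$ in case (a) (provided $\lambda \mp \alpha_j \in W\pi$) and $0$ in case (b), establishing the lemma for each choice of sign on the left (with the right hand side interpreted with the opposite sign on $\alpha_j$).

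The principal technical point is precisely this ambiguity: the substitution $K_{H_a} \mapsto v^{\pm \bil{H_a}{\alpha_j}} K_{H_a}$ does not descend to a well-defined map on $\T^0$, so two polynomial lifts of $1_\lambda$ that differ by an element of $I(P_{W\pi})$ can produce different elements of $\T^0$ after the substitution. Dispensing with this obstruction via the Lagrange-interpolation step above is the heart of the argument; once that is done the evaluation via Lemma \ref{tech} is mechanical.
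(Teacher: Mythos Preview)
Your proof is correct and follows essentially the same approach as the paper: both resolve the ambiguity in the substitution by passing to the enlarged point set $D = W\pi \cup (W\pi + \alpha_j) \cup (W\pi - \alpha_j)$. The paper phrases this as working in the auxiliary algebra $Q = \UU^0/I'(P_D)$ and viewing $\T^0$ as a non-unital subalgebra (so that the idempotent $1_\lambda$ in $Q$ already vanishes at the extra points), while you phrase it as choosing a specific Lagrange-interpolation lift of $1_\lambda$ that vanishes on $P_D \setminus P_{W\pi}$; these are the same maneuver, and your explicit identification of the sign correspondence between the two sides is a helpful clarification.
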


\begin{proof}
Let $D = W\pi \cup \{\omega \pm \alpha_j\mid \omega\in
W\pi,j=1,\ldots,n\}$. Set $Q=\UU/I'(P_D)$. Note that $\T^0$ is a
non-unital subalgebra of $Q$. By Lemma \ref{tech} we have in the
algebra $Q$
\begin{align*}
1_{\lambda}(v^{-\bil{H_1}{\alpha_j}} K_{H_1}, \dots, v^{-
\bil{H_n}{\alpha_j}} K_{H_n})(p_\mu) &= 1_{\lambda}
(p_{\mu-\alpha_j}) \\
&= \delta_{\lambda,\mu-\alpha_j} = \delta_{\lambda+\alpha_j,\mu}
\end{align*}
for any $\mu \in X$, $j \in I$.  The result now follows from the
remarks at the beginning of paragraph \ref{rmks}, since
$\lambda+\al_j\in D$.
\end{proof}

\begin{lem} \label{commutation}
Let $\lambda \in W\pi$. In the algebra $\T$ we have

\pt{a} $E_i 1_{\lambda}=
\begin{cases}
 1_{\lambda+\alpha_i} E_i &\text{if $\lambda+\al_i \in W\pi$}\\
 0 & \text{otherwise}.
\end{cases}$

\pt{b} $F_i 1_{\lambda}=
\begin{cases}
 1_{\lambda-\alpha_i} F_i &\text{if $\lambda-\al_i \in W\pi$}\\
 0 & \text{otherwise}.
\end{cases}$

\pt{c} $1_{\lambda} E_i=
\begin{cases}
 E_i 1_{\lambda-\alpha_i}&\text{if $\lambda-\al_i \in W\pi$}\\
 0 & \text{otherwise}.
\end{cases}$

\pt{d} $1_{\lambda} F_i =
\begin{cases}
 F_i 1_{\lambda+\alpha_i} &\text{if $\lambda+\al_i\in W\pi$}\\
 0 & \text{otherwise}.\end{cases}$
\end{lem}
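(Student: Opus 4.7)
The strategy is uniform across all four parts: commute the relevant Chevalley generator past the Laurent polynomial expression for $1_\lambda$ inside $\T^0$, then identify the resulting coefficient via Lemma~\ref{john:corollary}.

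First I would note that the defining relations \ref{def:UU}(b),(c), valid in $\UU$ and hence in its quotient $\T$, iterate to give for any Laurent polynomial $f \in \Q(v)[K_{\pm H_1},\dots,K_{\pm H_n}]$ and any $i \in I$ the substitution rules
\begin{align*}
E_i\,f(K_{H_1},\dots,K_{H_n}) &= f(v^{-\bil{H_1}{\alpha_i}}K_{H_1},\dots,v^{-\bil{H_n}{\alpha_i}}K_{H_n})\,E_i,\\
f(K_{H_1},\dots,K_{H_n})\,E_i &= E_i\,f(v^{\bil{H_1}{\alpha_i}}K_{H_1},\dots,v^{\bil{H_n}{\alpha_i}}K_{H_n}),
\end{align*}
together with their two analogues for $F_i$ (obtained by reversing all signs). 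By the isomorphism $\T^0 \simeq \UU^0/I'(P_{W\pi})$ established in \ref{rmks}, each $1_\lambda$ may be lifted to such a Laurent polynomial, so these substitution rules apply with $f = 1_\lambda$.

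For part (a), taking $f = 1_\lambda$ in the first identity yields
\[
E_i 1_\lambda = 1_\lambda(v^{-\bil{H_1}{\alpha_i}}K_{H_1},\dots,v^{-\bil{H_n}{\alpha_i}}K_{H_n})\,E_i,
\]
and Lemma~\ref{john:corollary} identifies the leading factor with $1_{\lambda+\alpha_i}$ when $\lambda+\alpha_i \in W\pi$ and with $0$ otherwise. Parts (b), (c), (d) follow by exactly the same maneuver, with the substitution sign dictated by whether we move $E_i$ or $F_i$ and whether from the left or right of $1_\lambda$; in each case that sign is paired with the appropriate case of Lemma~\ref{john:corollary}, which then produces the correctly shifted idempotent $1_{\lambda \pm \alpha_i}$ or the vanishing statement.

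No step presents a real obstacle; the only input with content is Lemma~\ref{john:corollary}, already established. The one point requiring care is matching the four sign patterns (generator type $E_i$ vs.\ $F_i$, and side of multiplication) to the sign convention in Lemma~\ref{john:corollary} so that the shift produced agrees with the $\pm\alpha_i$ appearing on the right-hand side of each of (a)--(d).
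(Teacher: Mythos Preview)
Your proposal is correct and follows essentially the same route as the paper's proof: both commute $E_i$ (or $F_i$) past the polynomial representative of $1_\lambda$ via the substitution rule derived from \ref{def:UU}(b),(c), and then invoke Lemma~\ref{john:corollary} to identify the resulting shifted idempotent or zero. The only cosmetic difference is that the paper works directly with a polynomial lift of $1_\lambda$ in $\Q(v)[K_{H_1},\dots,K_{H_n}]$ rather than a Laurent polynomial, but this changes nothing.
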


\begin{proof}
From relation \ref{def:UU}(b) we have $E_iK_h = v^{-\bil{h}{\al_i}}
K_hE_i$ for all $h\in Y$, $i\in I$. For any polynomial
$f(K_{H_1},\ldots,K_{H_n})$ it follows that
\[
E_i\,f(K_{H_1},\ldots,K_{H_n}) = f(v^{-\bil{H_1}{\al_i}}
K_{H_1},\ldots, v^{-\bil{H_n}{\al_i}}K_{H_n}) \,E_i.
\]
Thus in the algebra $\T$ we have
\[
E_i\,1_{\lambda}(K_{H_1},\ldots,K_{H_n})=
1_{\lambda}(v^{-\bil{H_1}{\al_i}}K_{H_1}, \ldots,
v^{-\bil{H_n}{\al_i}}K_{H_n}) \,E_i
\]
which by \ref{john:corollary} equals $1_{\lambda+\al_i}$ if
$\lambda+\al_i\in W\pi$, and equals $0$ if $\lambda+\al_i \notin
W\pi$. This proves part (a); the argument in the remaining cases is
similar.
\end{proof}

Our main result is the following.

\begin{thm} \label{thm:main}
Let $\{H_1, \dots, H_n\}$ be a $\Z$-basis of $Y$. The defining ideal
  $\ideal$ (see \ref{gsa}) of the generalized $q$-Schur algebra
  $\Sch(\pi)$ is the two-sided ideal of $\UU$ generated by
  $I(P_{W\pi})$. In particular, $\ideal$ is generated by its
  intersection with $\UU^0$.
\end{thm}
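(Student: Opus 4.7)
The plan is to produce a surjective algebra homomorphism $\theta: \Sch(\pi) \to \T = \UU/J$ inverse to the canonical surjection $\T \to \Sch(\pi)$ furnished by Lemma \ref{lem:one}. Using the idempotent presentation $\Sch(\pi) \simeq \SchP(\pi)$ from Theorem \ref{thm:idemp-present}, it suffices to check that the images in $\T$ of the elements $E_i$, $F_i$ ($i \in I$), together with the idempotents $1_\lambda$ of $\T^0$ ($\lambda \in W\pi$) introduced in \ref{PGSArelations}, satisfy the defining relations \ref{def:Spi}(a)--(f) of $\SchP(\pi)$. Almost all the work for this has already been carried out.

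Concretely, I would assemble the verifications as follows. Relations \ref{def:Spi}(a) are exactly the statement that $\{1_\lambda \mid \lambda \in W\pi\}$ is a complete system of orthogonal idempotents in $\T^0$; this is immediate from the isomorphism $\UU^0/I'(P_{W\pi}) \simeq \bigoplus_{p \in P_{W\pi}} \Q(v)$ established in \ref{rmks} together with the identification $\T^0 \simeq \UU^0/(\UU^0 \cap J) \simeq \UU^0/I'(P_{W\pi})$. Relations \ref{def:Spi}(b) and (c) are precisely the content of Lemma \ref{commutation}(a)--(d). Relation \ref{def:Spi}(d) is Lemma \ref{lem:d}. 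Finally, relations \ref{def:Spi}(e) and (f) are formally identical with \ref{def:UU}(e) and (f), hence hold in any quotient of $\UU$, in particular in $\T$.

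With these relations verified, the universal property of $\SchP(\pi)$ yields a surjective algebra homomorphism $\theta: \SchP(\pi) \to \T$ sending generators to generators (surjectivity follows because the $E_i$, $F_i$, together with $\T^0$, generate $\T$ via the triangular decomposition, and the $1_\lambda$ span $\T^0$). Composing with the surjection $\T \to \Sch(\pi) \simeq \SchP(\pi)$ of Lemma \ref{lem:one} gives a surjective endomorphism of $\SchP(\pi)$ which is the identity on the generators, hence the identity map. Since $\SchP(\pi)$ is finite-dimensional (Lemma \ref{lem:fdss}), both surjections must be isomorphisms, yielding $\T \simeq \Sch(\pi)$ and therefore $\ideal = J$. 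The final assertion of the theorem, that $\ideal$ is generated by $\ideal \cap \UU^0$, follows because $I(P_{W\pi}) \subset \UU^0$ by construction.

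There is no genuinely new obstacle here; the proof is an assembly of the preparatory lemmas. The one point requiring care is confirming that the idempotents $1_\lambda$ defined in \ref{PGSArelations} in terms of the affine-variety presentation of $\T^0$ really do satisfy the interaction relations with $E_i$ and $F_i$ demanded by \ref{def:Spi}(b)--(d). That verification is the substantive content of Lemmas \ref{tech}--\ref{commutation}, where the key technical step is the translation (Lemma \ref{tech}) of the commutation relations \ref{def:UU}(b), (c) into a statement about translating the polynomial $1_\lambda$ by $\pm \alpha_j$ in the variables $K_{H_a}$, followed by the observation (Lemma \ref{john:corollary}) that the translate is either $1_{\lambda \pm \alpha_j}$ or zero inside $\T^0$.
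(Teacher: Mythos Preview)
Your proposal is correct and follows essentially the same route as the paper's proof: verify that the generators $E_i$, $F_i$, $1_\lambda$ of $\T$ satisfy the defining relations \ref{def:Spi}(a)--(f) of $\SchP(\pi)$, thereby producing a surjection $\SchP(\pi)\to\T$, and then combine this with the surjection $\T\to\Sch(\pi)$ from \ref{qm} to conclude that both are isomorphisms. The only cosmetic difference is that the paper finishes with a dimension comparison ($\dim\T\ge\dim\Sch(\pi)$ and $\dim\T\le\dim\Sch(\pi)$), whereas you argue that the composite $\SchP(\pi)\to\T\to\SchP(\pi)$ is the identity on generators; both are immediate once the relations are checked.
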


\begin{proof}
The existence of a quotient map $\T \to \Sch(\pi)$ in \ref{qm} shows
that $\dim \T \ge \dim \Sch(\pi)$.

On the other hand, $\T$ is generated by the elements $E_i$, $F_i$
($i\in I$) and $1_\lambda$ ($\lambda \in W\pi$). The orthogonality of
the idempotents $1_\lambda$ shows that these generators satisfy
\ref{def:Spi}(a), and \ref{lem:d} shows that they satisfy
\ref{def:Spi}(d). They satisfy \ref{def:Spi}(b), (c) by
\ref{commutation} and \ref{def:Spi}(e), (f) are automatic by
\ref{def:UU}(e), (f).  It thus follows that $\T$ is a quotient of
$\Sch(\pi)$, and thus $\dim \T \le \dim \Sch(\pi)$. This proves that
\[
\dim \Sch(\pi) = \dim \T,
\]
and thus the quotient map $\T \to \Sch(\pi)$ is an isomorphism. The
result now follows from the definitions $\T = \UU/J$, $\Sch(\pi) =
\UU/\ideal$, and the definition of $J$ given in \ref{qm}.
\end{proof}

Finally, we wish to obtain a more explicit description of the defining
ideal $I(P_{W\pi})$. We continue to work with an arbitrary basis
$H_1,\ldots, H_n$ of the free abelian group $Y$. Let $L_1,\ldots, L_n$
be the corresponding dual basis of $X$ under the perfect pairing.  We
continue to view the elements $K_a:= K_{H_a}$ ($a=1, \dots, n$) as
coordinate functions on $n$-dimensional affine space
$\Aff_{\Q(v)}^n$. Any element $\lambda\in X$ determines a point
\[
p_\lambda = (v^{\bil{H_1}{\lambda}},\ldots ,v^{\bil{H_n}{\lambda}})\in
\Aff_{\Q(v)}^n
\]
and we regard elements of $\UU^0$ as well-defined functions on $P_X$
via $K_h(p_\lambda) = v^{\bil{h}{\lambda}}$, for any $h\in Y$,
$\lambda \in X$. In the following, we will sometimes use the
convenient shorthand $\lambda_a := \bil{H_a}{\lambda}$, for $a=1,
\dots, n$.

For $h \in Y$, we write $h=\sum c_aH_a\in Y$ in the form $h=h_++h_-$
where $h_+$ (respectively $h_-$) is the part of the sum indexed by all $a$
such that $c_a>0$ (respectively $c_a<0)$. We thus have
$K_h=K_{h_+}K_{h_-}$. Let $\Delta$ be the ideal of $\UU^0$ generated
by all
\[
F_h = \prod_{\lambda \in W\pi}(K_h-v^{\bil{h}{\lambda}}) \qquad (h \in
Y).
\]
Since $K_{h_-} \in \UU^0$ is invertible, an alternate set of
generators for $\Delta$ is given by $G_h = \prod_{\lambda \in
  W\pi}(K_{h_+} - v^{\bil{h}{\lambda}} K_{h_-}^{-1})$ (for $h\in
Y$). We have $K_{h_-} G_h = F_h$ for each $h \in Y$.  The generators
$G_h$ lie in the polynomial ring $\Q(v)[K_1,\ldots,K_n].$

\begin{lem}\label{lem:VDelta}
Consider the set $Z(\Delta)$ consisting of all
$p_\lambda \in P_X$ such that $F(p_\lambda) = 0$ for all $F \in
\Delta$. Then $Z(\Delta) = P_{W\pi}$.
\end{lem}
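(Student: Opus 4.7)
The plan is to verify the two inclusions separately. Since $\Delta$ is generated as an ideal of $\UU^0$ by the elements $F_h$, a point $p_\mu \in P_X$ lies in $Z(\Delta)$ if and only if $F_h(p_\mu) = 0$ for every $h \in Y$. The forward inclusion $P_{W\pi} \subseteq Z(\Delta)$ is immediate: for $\mu \in W\pi$ and any $h \in Y$, the product
$$F_h(p_\mu) = \prod_{\lambda\in W\pi}\bigl(v^{\bil{h}{\mu}} - v^{\bil{h}{\lambda}}\bigr)$$
contains the vanishing factor indexed by $\lambda = \mu$, so $F_h(p_\mu) = 0$.

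For the reverse inclusion I would argue by contrapositive. Given $\mu \in X$ with $\mu \notin W\pi$, I must exhibit a single $h \in Y$ such that $F_h(p_\mu) \ne 0$. Because $v$ is transcendental over $\Q$, this reduces to finding $h$ with $\bil{h}{\mu - \lambda} \ne 0$ for every $\lambda \in W\pi$ simultaneously. For each individual $\lambda$, the element $\mu - \lambda \in X$ is nonzero (since $\mu \notin W\pi$ while $\lambda \in W\pi$), so perfectness of the pairing $Y \times X \to \Z$ forces
$$Y_\lambda := \{h \in Y \mid \bil{h}{\mu - \lambda} = 0\}$$
to be a proper subgroup of $Y$.

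The one substantive step is the simultaneous avoidance of all the $Y_\lambda$, which I would handle by tensoring with $\Q$: each $Y_\lambda \otimes \Q$ is a proper $\Q$-subspace of the finite-dimensional space $Y \otimes \Q$, and since $\Q$ is infinite a finite union of proper subspaces cannot exhaust a $\Q$-vector space. Choosing $h' \in Y\otimes \Q$ outside $\bigcup_{\lambda\in W\pi}(Y_\lambda\otimes\Q)$ and clearing denominators produces the required $h \in Y$, establishing $F_h(p_\mu) \ne 0$. This simultaneous-avoidance point is the only real obstacle in the argument, and it is resolved by that elementary fact about vector spaces over infinite fields.
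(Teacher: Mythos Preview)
Your proof is correct and takes essentially the same approach as the paper: both arguments reduce the reverse inclusion to the elementary fact that a finite-dimensional vector space over an infinite field cannot be written as a finite union of proper hyperplanes. The paper phrases this in coordinates (covering $\Aff_\Q^n$ by the hyperplanes $\mathcal{S}_\lambda$ determined by $\sum_a(s_a-\lambda_a)H_a=0$) and argues directly, while you work coordinate-free in $Y\otimes\Q$ and argue by contrapositive; the logical content is identical.
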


\begin{proof}
It is clear that $P_{W\pi}\subset Z(\Delta)$. Now take $p\in
Z(\Delta)$. It is clear that $p=p_s$ for some $s=\sum s_aL_a\in
X$. Since every $F_h(p_s)=0$, it follows that for all $h\in Y$ there
exists $\lambda \in W\pi$ with $\langle h,\lambda\rangle=\langle
h,s\rangle$.  If $h=\sum c_aH_a$, then $\langle
h,\lambda\rangle=\langle h,s\rangle$ implies that the point
$(c_1,\ldots,c_n)\in \Aff_{\Q}^n$ lies on the codiminsion one
hyperplane $\mathcal{S}_{\lambda}$ determined by the equation $\sum
(s_a-\lambda_a)H_a=0$.  So if $(c_1,\ldots,c_n)\notin
\bigcup_{\lambda\in W\pi}\mathcal{S}_{\lambda}$, then $\langle
h,\lambda\rangle\neq \langle h,s\rangle$ for all $\lambda \in
W\pi$. But $p_s\in Z(\Delta)$ and so no such point $(c_1,\ldots, c_n)$
can exist.  Thus we must have $\bigcup_{\lambda\in
  W\pi}\mathcal{S}_{\lambda}=\Aff_{\Q}^n$, which implies that
$s=\lambda$ for some $\lambda \in W\pi$. Hence $p_s\in P_{W\pi}$ and
the lemma is proved.
\end{proof}

Let $\Delta'$ be the ideal of $\Q(v)[K_1,\ldots,K_n]$ generated by all
$G_h$. By the comments above, $Z(\Delta)=Z(\Delta')$ and so by the
Lemma we have that $Z(\Delta')=P_{W\pi}$. It will be established next
that $\Q(v)[K_1,\ldots,K_n]/\Delta'$ is reduced, i.e.,\ has no
nilpotent elements. We may work over the algebraic closure $\F$ of
$\Q(v)$ since nilpotent elements remain nilpotent under field
extension.

Let $\mathcal O_{\lambda}$ be the local ring of $\Aff_{\F}^n$ at
$p_{\lambda}$.  Since $Z(\Delta')$ is finite, we have an isomorphism
(see \cite[Chapter 2, \S9, Prop.~6]{Fulton})
\[
\F[K_1,\ldots, K_n]/\Delta'\rightarrow \prod_{\lambda\in W\pi}\mathcal
O_{\lambda}/\Delta'\mathcal O_{\lambda}.
\]
Now if each $\mathcal O_{\lambda}/\Delta'\mathcal O_{\lambda}$ has no
nilpotent elements, then the desired result will be attained. This
will be the case if each $\mathcal O_{\lambda}/\Delta'\mathcal
O_{\lambda}$ is regular since a regular local ring is a domain.

\begin{lem}\label{lem:reg}
The local ring $\mathcal O_{\lambda}/\Delta'\mathcal
O_{\lambda}$ is regular.
\end{lem}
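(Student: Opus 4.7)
The plan is to show that $\Delta'\mathcal O_\lambda$ coincides with the maximal ideal $\mathfrak{m}_\lambda$ of $\mathcal O_\lambda$, which immediately yields regularity: $\mathcal O_\lambda/\mathfrak{m}_\lambda \cong \F$ is a field, and a zero-dimensional regular local ring is exactly a field. Note that $\Delta'\mathcal O_\lambda \subseteq \mathfrak{m}_\lambda$ is automatic, since each generator $G_h$ vanishes at $p_\lambda$. For the reverse inclusion, I plan to exhibit $n$ elements of $\Delta'\mathcal O_\lambda$ whose images in $\mathfrak{m}_\lambda/\mathfrak{m}_\lambda^2$ are $\F$-linearly independent; since $\dim_\F \mathfrak{m}_\lambda/\mathfrak{m}_\lambda^2 = n$, Nakayama's lemma then forces $\Delta'\mathcal O_\lambda = \mathfrak{m}_\lambda$.

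The production of these $n$ elements is the core step. First I would choose $h^{(1)},\dots,h^{(n)} \in Y$ forming a $\Q$-basis of $Y\otimes_\Z \Q$ and satisfying $\bil{h^{(i)}}{\mu - \lambda} \ne 0$ for every $\mu \in W\pi \setminus \{\lambda\}$ and every $i$. Such a selection exists by a standard Zariski-density argument: the locus in $Y\otimes\Q$ where $\bil{h}{\mu-\lambda}=0$ for some $\mu \ne \lambda$ is a finite union of proper hyperplanes, so its complement is Zariski-dense and contains a $\Q$-basis, which may then be cleared of denominators into $Y$. With this choice, the factorization
\[
F_{h^{(i)}} = \prod_{\mu \in W\pi}\bigl(K_{h^{(i)}} - v^{\bil{h^{(i)}}{\mu}}\bigr)
\]
has exactly one factor vanishing at $p_\lambda$ (the one with $\mu=\lambda$); all other factors are units in $\mathcal O_\lambda$. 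Since $K_{h^{(i)}_-}$ is also a unit in $\mathcal O_\lambda$ and $K_{h^{(i)}_-} \cdot G_{h^{(i)}} = F_{h^{(i)}}$, we find that $G_{h^{(i)}}$ and $K_{h^{(i)}} - v^{\bil{h^{(i)}}{\lambda}}$ differ by a unit in $\mathcal O_\lambda$, so the latter lies in $\Delta'\mathcal O_\lambda$.

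The last step is a short tangent-space calculation. Writing $h^{(i)} = \sum_{a=1}^n c_{ia}H_a$ and using that each $K_a$ is a unit in $\mathcal O_\lambda$, one computes
\[
K_{h^{(i)}} - v^{\bil{h^{(i)}}{\lambda}} \;\equiv\; v^{\bil{h^{(i)}}{\lambda}}\sum_{a=1}^n c_{ia}\,v^{-\lambda_a}\bigl(K_a - v^{\lambda_a}\bigr) \pmod{\mathfrak{m}_\lambda^2}.
\]
Since $(c_{ia})$ is invertible over $\Q$ (hence over $\F$) by the basis condition, the $n$ cotangent images so produced are $\F$-linearly independent in $\mathfrak{m}_\lambda/\mathfrak{m}_\lambda^2$. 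Nakayama then completes the argument. The only nontrivial point is the simultaneous enforcement of the $\Q$-basis and generic-position conditions on the $h^{(i)}$'s; the remainder is bookkeeping in the localization, one derivative calculation, and one application of Nakayama's lemma.
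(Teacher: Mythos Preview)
Your proposal is correct and follows essentially the same route as the paper: choose a generic basis of $Y$ relative to $W\pi$, observe that the corresponding generators of $\Delta'$ become unit multiples of the linear forms $K_{h^{(i)}}-v^{\bil{h^{(i)}}{\lambda}}$ in $\mathcal O_\lambda$, and verify that the differentials of these forms are independent (your Nakayama/cotangent formulation is exactly the paper's Jacobian computation). The only difference is that the paper arranges its change-of-basis matrix to lie in $\SL_n(\Z)$ with non-negative entries so that $F_{H_a'}=G_{H_a'}$ on the nose, whereas your observation that $K_{h_-}$ is already a unit in $\mathcal O_\lambda$ makes that extra constraint unnecessary --- a mild simplification, but not a different method.
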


\begin{proof}
Fix $\lambda \in W\pi$ and choose $C=(c_{ab})\in \SL_n(\mathbb Z)$
such that for all $a,b$, $c_{ab}\geq 0$ and for all $\mu\neq \lambda$,
$\bil{H_a'}{\lambda} \neq \bil{H_a'}{\mu}$ where $H_a' = c_{a1}H_1 +
\ldots + c_{an}H_{n}$. (The existence of such a $C$ is proved below.)
The condition $c_{ab}\geq 0$ for all $a,b$ implies that $F_{H_a'} =
G_{H_a'}$ and the condition that $\bil{H_a'}{\lambda} \neq
\bil{H_a'}{\mu}$ for $\mu \neq \lambda$ implies that
$K_{H_a'}(p_{\lambda}) \neq K_{H_a'}(p_{\mu})$ for all $\lambda \neq
\mu$. Hence, the generator $F_{H_a'}$ of $\Delta'$ is an invertible
multiple of $(K_{H_a'}-v^{\bil{H_a'}{\lambda}})$ in the local ring
$\mathcal O_{\lambda}$.  A simple calculation shows that
\[
\frac {\partial( K_{H_a'}-v^{\bil{H_a'}{\lambda}}) }{\partial
  K_b}(p_{\lambda}) = c_{ab}v^{-\lambda_b}\,v^{\bil{H_a'}{\lambda}}.
\]
Hence the Jacobian matrix for
$K_{H_1'}-v^{\bil{H_1'}{\lambda}},\ldots,
K_{H_n'}-v^{\bil{H_n'}{\lambda}}$ at $p_{\lambda}$ is the matrix
\[
\mathcal J=(c_{ab}v^{-\lambda_b}\,v^{\bil{H_a'}{\lambda}})_{1\le a,b
  \le n}.
\]
If for all $a$ and $b$ we multiply column $b$ of $\mathcal J$ by
$v^{\lambda_b}$ and we divide row $a$ by $v^{\bil{H_a'}{\lambda}}$, we
obtain the matrix $C$. Since $C\in \SL_n(\mathbb Z)$, it follows that
$\det (\mathcal J)\neq 0$. Therefore $\mathcal
O_{\lambda}/\Delta'\mathcal O_{\lambda}$ is a regular local ring.

It remains only to establish the existence of a suitable $C$ with the
desired properties.  This amounts to choosing a ``generic'' element of
$\SL_n(\Z)$ with non-negative entries. Here are the details. Fix
$\lambda \in W\pi$. If $\bil{H_a'}{\lambda} = \bil{H_a'}{\mu}$, then
$\mathbf{c}(a) := (c_{a1},\ldots, c_{an})$ lies on the hyperplane
$\mathcal{S}_{\mu}= (\lambda_1-\mu_1)H_1+\cdots
(\lambda_n-\mu_n)H_n=0$. Thus we need to find a $C$ such that none of
its rows lies in the finite union $\bigcup_{\mu\neq \lambda}
\mathcal{S}_{\mu}$.

Let $C$ be any matrix in $\SL_n(\mathbb Z)$ with non-negative
entries. Suppose some $\mathbf{c}(a)$ lies on one of the $\mathcal
S_{\mu}$.  Pick a row $\mathbf{c}(b)\notin \mathcal S_{\mu}$; this is
possible since $C$ has rank $n$. If $\mathbf{c}(a)$ is not in some of
the hyperplanes, choose $m>0$ so that $\mathbf{c}(a)+m\mathbf{c}(b)$
is also not in those hyperplanes.  Then $\mathbf{c}(a)+m\mathbf{c}(b)$
is not in $\mathcal{S}_\mu$. Replace $\mathbf{c}(a)$ by
$\mathbf{c}(a)+m\mathbf{c}(b)$; the determinant does not change and
the entries of $C$ remain non-negative.  Repeating that process, we
replace $\mathbf{c}(a)$ with a row that is in none of the hyperplanes.
\end{proof}

We can now obtain the desired explicit description of $I(P_{W\pi})$.

\begin{prop}
  $I(P_{W\pi})$ is the ideal of $\Q(v)[K_{H_1}, \dots, K_{H_n}]$
  generated by all $\prod_{\lambda \in W\pi} (K_h -
  v^{\bil{h}{\lambda}})$ for $h \in Y$.
\end{prop}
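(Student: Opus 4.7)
The plan is to establish $\Delta' = I(P_{W\pi})$, where $\Delta'$ is the ideal of $\Q(v)[K_1,\ldots,K_n]$ introduced above. The inclusion $\Delta' \subseteq I(P_{W\pi})$ is immediate: for any $h \in Y$ and any $\lambda_0 \in W\pi$, the factor $K_{h_+} - v^{\bil{h}{\lambda_0}} K_{h_-}^{-1}$ of $G_h$ evaluates at $p_{\lambda_0}$ to $v^{\bil{h_+}{\lambda_0}} - v^{\bil{h}{\lambda_0}} v^{-\bil{h_-}{\lambda_0}} = 0$, so $G_h$ vanishes on all of $P_{W\pi}$.

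For the reverse inclusion I would use a dimension count. The quotient $\Q(v)[K_1,\ldots,K_n]/I(P_{W\pi})$ is the coordinate ring of the $|W\pi|$ distinct points of $P_{W\pi}$, so it has $\Q(v)$-dimension $|W\pi|$; given the containment already proved, it suffices to show that $\Q(v)[K_1,\ldots,K_n]/\Delta'$ has the same dimension. Base-changing to the algebraic closure $\F$ of $\Q(v)$ preserves dimension, so it is enough to compute $\dim_\F \F[K]/\Delta'\F[K]$. By Lemma \ref{lem:VDelta} (extended to arbitrary $\F$-points as discussed below) the $\F$-variety $Z_\F(\Delta')$ equals the finite $\F$-rational set $P_{W\pi}$, and the Fulton reference \cite[Ch.~2, \S9, Prop.~6]{Fulton} supplies the product decomposition
\[
  \F[K]/\Delta'\F[K] \;\simeq\; \prod_{\lambda \in W\pi} \mathcal{O}_\lambda/\Delta'\mathcal{O}_\lambda.
\]
By Lemma \ref{lem:reg} each factor is a regular local ring, and since its support is zero-dimensional it is in fact a field, necessarily equal to $\F$ (the residue field of $\mathcal{O}_\lambda$). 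Hence $\F[K]/\Delta'\F[K] \simeq \F^{|W\pi|}$ has $\F$-dimension $|W\pi|$, completing the count.

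The one technical point that needs care, and which I view as the main obstacle, is the assertion that $Z_\F(\Delta') = P_{W\pi}$ as $\F$-varieties, since Lemma \ref{lem:VDelta} was formulated only for points of the special form $p_\lambda \in P_X$. This is handled by noting that the generator $G_{H_a} \in \Delta'$ is a polynomial in the single variable $K_{H_a}$ whose roots are exactly $\{v^{\bil{H_a}{\lambda}} : \lambda \in W\pi\}$. Consequently every coordinate of any $\F$-point of $Z(\Delta')$ is an integer power of $v$, so the point has the form $p_s$ for some $s = \sum m_a L_a \in X$, and the argument of Lemma \ref{lem:VDelta} then applies verbatim to force $s \in W\pi$.
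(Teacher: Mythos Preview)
Your proof is correct and rests on the same two key lemmas (Lemmas \ref{lem:VDelta} and \ref{lem:reg}) as the paper. The only difference is the endgame: the paper argues that regularity of the local quotients makes $\F[K]/\Delta'$ reduced, hence $\Delta'$ is radical, and then invokes the Nullstellensatz to obtain $I(P_{W\pi}) = I(Z(\Delta')) = \sqrt{\Delta'} = \Delta'$; you instead observe that a zero-dimensional regular local ring is already a field, so the product decomposition gives $\dim_\F \F[K]/\Delta' = |W\pi|$ directly, and equality of ideals follows from the containment $\Delta' \subseteq I(P_{W\pi})$ together with matching dimensions. These are essentially equivalent moves. Your explicit remark that the single-variable generators $G_{H_a}$ already force every coordinate of any $\F$-point of $Z(\Delta')$ to be an integral power of $v$ --- hence of the form $p_s$ for some $s \in X$ --- is a useful clarification that the paper leaves implicit when passing to the algebraic closure.
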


\begin{proof}
It suffices to show that $I(P_{W\pi}) = \Delta'$. By the remarks
following the proof of Lemma \ref{lem:VDelta}, we have $P_{W\pi} =
Z(\Delta')$. By Lemma \ref{lem:reg} it follows that $\Delta'$ is a
radical ideal. Hence by the Nullstellensatz we have $I(P_{W\pi}) =
I(Z(\Delta')) = \sqrt{\Delta'} = \Delta'$, as desired.
\end{proof}

\begin{cor} \label{cor:main}
  The defining ideal $\Lambda$ of $\Sch(\pi)$ is the two sided ideal
  of $\UU$ generated by all $\prod_{\lambda \in W\pi} (K_h -
  v^{\bil{h}{\lambda}})$ for $h \in Y$.
\end{cor}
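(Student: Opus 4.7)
The plan is essentially to concatenate the two results that immediately precede the corollary, so there is very little to do. By Theorem \ref{thm:main}, applied to the chosen $\Z$-basis $H_1, \dots, H_n$ of $Y$, the defining ideal $\Lambda \subset \UU$ is the two-sided ideal generated by $I(P_{W\pi})$, where $I(P_{W\pi})$ lives inside the subalgebra $\Q(v)[K_{H_1}, \dots, K_{H_n}] \subset \UU^0$. By the preceding Proposition, that ideal $I(P_{W\pi})$ is generated, as an ideal of $\Q(v)[K_{H_1}, \dots, K_{H_n}]$, by the elements $\prod_{\lambda \in W\pi}(K_h - v^{\bil{h}{\lambda}})$ as $h$ ranges over $Y$.

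Combining these two statements gives the corollary: a set of generators for the commutative ideal $I(P_{W\pi})$ is automatically a set of generators, in $\UU$, for the two-sided ideal it generates, so the two-sided $\UU$-ideal generated by $I(P_{W\pi})$ coincides with the two-sided $\UU$-ideal generated by the explicit elements $\prod_{\lambda \in W\pi}(K_h - v^{\bil{h}{\lambda}})$.

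There is no real obstacle here, since all of the work has been done in Theorem \ref{thm:main} and the preceding Proposition. The only small point worth remarking on is that passing from ``generators of $I(P_{W\pi})$ as an ideal of the commutative subring'' to ``generators of $J = \sum_a \UU a \UU$ as a two-sided ideal of $\UU$'' is immediate: every $\Q(v)$-linear combination of products $f \cdot g$ with $f \in \Q(v)[K_{H_1},\dots,K_{H_n}]$ and $g$ one of the specified generators $\prod_{\lambda \in W\pi}(K_h - v^{\bil{h}{\lambda}})$ already lies in the two-sided $\UU$-ideal generated by these $g$'s, so nothing is lost. Hence the proof reduces to a single sentence citing \ref{thm:main} and the Proposition.
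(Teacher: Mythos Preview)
Your proposal is correct and matches the paper's own proof essentially verbatim: the paper simply says the result is immediate from Theorem~\ref{thm:main} and the preceding Proposition. Your additional remark about why generators of the commutative ideal $I(P_{W\pi})$ also generate the two-sided $\UU$-ideal is a harmless elaboration of what the paper leaves implicit.
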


\begin{proof}
  This is immediate from Theorem \ref{thm:main} and the preceding
  proposition.
\end{proof}

Although this result gives an infinite set of explicit generators for
the ideal $\Lambda$, it should be noted that $\Lambda$ is actually
finitely generated (since $\UU$ is Noetherian). Picking out a finite
set of generators of the above form seems difficult in general,
although we have managed to do that in various examples, treated below
in Section \ref{sec:ex}.

\section{The classical case}\noindent
There is a $v=1$ analogue of the main result, Theorem \ref{thm:main},
and the purpose of this section is to formulate it.  This reduces the
problem of finding relations to define a classical generalized Schur
algebra $S(\pi)$ as a quotient of $\U$ to the problem of finding
equations that cut out a set of points corresponding to the finite set
$W\pi$ of weights, as an affine variety.

\subsection{}\label{def:Ualt}
As before, we fix an arbitrary basis $\{H_1, \dots, H_n \}$ of the
abelian group $Y$. This is a $\Q$-basis of $\mathfrak{h} =
\Q\otimes_\Z Y \subset \g$.  It is easy to check that relations
\ref{U:classical}(a), (b), (c) for general $h \in \h$ are consequences
of those same relations imposed on just the $h = H_1, \dots, H_n$. (In
fact, this is true if one takes $\{H_1, \dots, H_n \}$ to be any
$\Q$-basis of $\mathfrak{h} = \Q\otimes_\Z Y$.)  It follows that $\U$
is the associative algebra with 1 over $\Q$ generated by the
\[
e_i, f_i\quad (i \in I) \qquad H_1, \dots, H_n
\]
and satisfying the relations

(a) $[H_a, H_b]=0$;

(b) $[H_a, e_i] = \bil{H_a}{\alpha_i} e_i$;

(c) $[H_a, f_i] = -\bil{H_a}{\alpha_i} f_i$;

(d) $[e_i, f_j] = \delta_{ij} h_i$;

(e) $(\text{ad } e_i)^{1 - \bil{h_i}{\alpha_j}} e_j = 0 =  (
\text{ad } f_i)^{1 - \bil{h_i}{\alpha_j}} f_j \quad (i\ne j)$

\noindent
for all $a,b = 1, \dots, n$ and all $i,j \in I$, where $\{H_1, \dots,
H_n\}$ is any $\Q$-basis of $\Q \otimes_\Z Y \subset \g$.

\subsection{}
By the Poincare-Birkhoff-Witt theorem it follows that
\[
\U^0 \simeq \Q[H_1, \dots, H_n];
\]
that is, the zero part of $\U$ is isomorphic with the polynomial
algebra in the $H_1, \dots, H_n$. We regard
$\Q[H_1, \dots, H_n]$ as the affine algebra of the
affine variety $\Aff_\Q^n$, and we regard $H_a$ for $a=1, \dots,
n$ as coordinate functions on $\Aff_\Q^n$.

Any subset $D \subset X$ determines a corresponding finite set of
points
\[
 P_D = \{ ({\bil{H_1}{\lambda}}, \dots,
  {\bil{H_n}{\lambda}}) \mid \lambda \in D \}
\]
in $\Aff_\Q^n$. Let $I(P_D)$ be the vanishing ideal of the finite set $P_D$
in $\Aff_\Q^n$. This is an ideal of $\Q[H_1, \dots,
H_n] \simeq \U^0$.

\begin{thm} \label{thm:classical:main}
Let $\{H_1, \dots, H_n\}$ be any $\Q$-basis of $\Q \otimes_\Z Y
\subset \g$.  The defining ideal $\ideal$ (see \ref{classical:gsa}) of
the generalized Schur algebra $S(\pi)$ is the two-sided ideal of $\U$
generated by $I(P_{W\pi})$. In particular, $\ideal$ is generated by
its intersection with $\U^0$.
\end{thm}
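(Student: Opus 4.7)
The plan is to mimic the proof of Theorem \ref{thm:main}, substituting the classical objects for their quantized counterparts. The argument is actually somewhat cleaner here since $\U^0 \simeq \Q[H_1, \dots, H_n]$ is an ordinary polynomial algebra rather than a Laurent polynomial algebra, so the subtleties involving $K_{h_-}$ that appear in Lemma \ref{lem:two} and in the description of generators of $\Delta$ do not arise.

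First I would establish the classical analogue of Lemma \ref{lem:one}: any element of $I(P_{W\pi})$ annihilates every weight space of any object of $\mathcal{C}[\pi]$, since $H_a$ acts on $M_\lambda$ as the scalar $\bil{H_a}{\lambda}$. Hence $I(P_{W\pi}) \subset \Lambda$, and letting $J$ denote the two-sided ideal of $\U$ generated by $I(P_{W\pi})$ and setting $\T = \U/J$, we obtain a surjection $\T \to S(\pi)$ whose bijectivity is the goal. Next, the classical analogue of Lemma \ref{lem:two} is immediate: the evaluation map $\U^0 \to \bigoplus_{\lambda \in W\pi} \Q$ sending $f$ to $(f(p_\lambda))_\lambda$ is surjective with kernel equal to the ideal $I'(P_{W\pi})$ of $\U^0$ generated by $I(P_{W\pi})$. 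This yields orthogonal idempotents $1_\lambda \in \T^0$ summing to $1$, together with an isomorphism $\U^0/I'(P_{W\pi}) \simeq \T^0$ and a triangular decomposition $\T = \T^-\T^0\T^+$.

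Third, I would verify the classical analogues of Lemmas \ref{tech}, \ref{john:corollary}, \ref{commutation}, and \ref{lem:d}. The engine is the commutation relation $[H_a, e_j] = \bil{H_a}{\alpha_j} e_j$, which for any $f \in \Q[H_1,\dots,H_n]$ gives
\[
e_j\, f(H_1,\dots,H_n) = f(H_1 - \bil{H_1}{\alpha_j},\dots, H_n - \bil{H_n}{\alpha_j})\, e_j,
\]
and analogously for $f_j$ with a $+\bil{H_a}{\alpha_j}$ shift. Applied to $f = 1_\lambda$ and evaluated at $p_\mu$, this shows that $e_j 1_\lambda$ in $\T$ equals $1_{\lambda+\alpha_j} e_j$ when $\lambda + \alpha_j \in W\pi$ and vanishes otherwise. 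Combined with relation \ref{U:classical}(d) and the identity $h_i = \sum_{\lambda \in W\pi} \bil{h_i}{\lambda}\, 1_\lambda$ in $\T^0$ (which follows from the evaluation isomorphism), one gets the classical version of Lemma \ref{lem:d}: $[e_i, f_j] = \delta_{ij} \sum_{\lambda \in W\pi} \bil{h_i}{\lambda}\, 1_\lambda$ in $\T$.

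Finally, these identities show that the generators $e_i, f_i, 1_\lambda$ of $\T$ satisfy the defining relations \ref{def:Spiv=1}(a)--(f) of $\widetilde{S}(\pi) \simeq S(\pi)$, yielding a surjection $S(\pi) \to \T$. Composing with the surjection $\T \to S(\pi)$ forces both to be isomorphisms, so $\Lambda = J$ as claimed. I expect the main obstacle to be largely bookkeeping; the one genuinely new feature is the extra flexibility that $\{H_1, \dots, H_n\}$ may be any $\Q$-basis of $\h$, not just a $\Z$-basis of $Y$. This presents no real difficulty since the subalgebra $\U^0 \subset \U$ and the collection of scalars $\{\bil{H}{\lambda} : \lambda \in W\pi\}$ attached to each $H \in \h$ are intrinsic; a change of $\Q$-basis of $\h$ merely changes the coordinates on the affine variety $P_{W\pi}$ by a $\GL_n(\Q)$-transformation, leaving the vanishing ideal, viewed as a subspace of $\U^0$, unchanged.
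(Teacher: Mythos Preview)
Your proposal is correct and follows exactly the approach the paper intends: the paper's own proof of Theorem~\ref{thm:classical:main} consists solely of the remark that it parallels the quantized case and that details are left to the reader, so you have faithfully supplied those details. Your observation that the argument is cleaner here (no Laurent denominators, so the analogue of Lemma~\ref{lem:two} is trivial) and your handling of the extra $\Q$-basis flexibility via a $\GL_n(\Q)$ change of coordinates are both apt and in the spirit of the paper's remarks in \ref{def:Ualt}.
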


The proof parallels the proof of the quantized case in the preceding
section, and the details are left to the reader.

Finally, we also have the analogue of Corollary \ref{cor:main}, proved
by similar arguments as in the quantized case. Again, we leave the
details to the reader and simply state the result, which gives a more
explicit description of the ideal $\Lambda$ in the classical
situation.

\begin{cor}\label{cor:classical:main}
  The defining ideal $\Lambda$ of $S(\pi)$ is the two sided ideal
  of $\U$ generated by all $\prod_{\lambda \in W\pi} (h -
  {\bil{h}{\lambda}})$ for $h \in Y$.
\end{cor}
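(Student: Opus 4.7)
The plan is to mirror the quantized argument exactly, noting that the classical setting is in fact simpler. First I would apply Theorem \ref{thm:classical:main} to reduce the corollary to the statement that $I(P_{W\pi})$, viewed as an ideal of $\Q[H_1,\dots,H_n]\simeq \U^0$, is generated by the polynomials $F_h = \prod_{\lambda \in W\pi}(h - \bil{h}{\lambda})$ as $h$ ranges over $Y$. Let $\Delta$ denote the ideal of $\U^0$ generated by these $F_h$. The containment $\Delta \subset I(P_{W\pi})$ is immediate from the definitions, since every $F_h$ vanishes at every $p_\lambda$ with $\lambda \in W\pi$.

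For the reverse containment I would first verify the classical analogue of Lemma \ref{lem:VDelta}: the zero set $Z(\Delta) \subset P_X$ equals $P_{W\pi}$. This goes through by the same hyperplane argument as in the quantized case (in fact more directly, because there is no need to separate $h$ into $h_+$ and $h_-$ or to invoke invertibility of $K_{h_-}$). Namely, if $p_s \in Z(\Delta)$, then for every $h = \sum c_a H_a \in Y$ there exists $\lambda \in W\pi$ with $\bil{h}{\lambda} = \bil{h}{s}$, so $(c_1,\dots,c_n)$ lies in the finite union of hyperplanes $\mathcal{S}_\lambda : \sum (s_a-\lambda_a)H_a = 0$ over $\lambda \in W\pi$; since this union must cover $\Aff_\Q^n$, one of these hyperplanes is all of $\Aff_\Q^n$, forcing $s \in W\pi$.

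Next I would show that $\Delta$ is a radical ideal. Working over the algebraic closure $\F$ of $\Q$, it suffices by the finite-set decomposition $\F[H_1,\dots,H_n]/\Delta \simeq \prod_{\lambda \in W\pi} \mathcal{O}_\lambda/\Delta \mathcal{O}_\lambda$ to prove each local factor is a regular local ring. Fix $\lambda$ and choose $C = (c_{ab}) \in \SL_n(\Z)$ such that, setting $H_a' = \sum_b c_{ab} H_b$, one has $\bil{H_a'}{\lambda} \ne \bil{H_a'}{\mu}$ for every $\mu \in W\pi$ with $\mu \ne \lambda$ and every $a$; the existence of such $C$ follows by the row-manipulation argument of Lemma \ref{lem:reg} (non-negativity is not even required here). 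Then in $\mathcal{O}_\lambda$ each generator $F_{H_a'}$ of $\Delta$ is an invertible multiple of the linear form $H_a' - \bil{H_a'}{\lambda}$. The Jacobian of $(H_1' - \bil{H_1'}{\lambda},\dots,H_n' - \bil{H_n'}{\lambda})$ with respect to $(H_1,\dots,H_n)$ is exactly the matrix $C$, whose determinant is nonzero; hence the local ring $\mathcal{O}_\lambda/\Delta\mathcal{O}_\lambda$ is regular and therefore a domain.

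The combination $Z(\Delta) = P_{W\pi}$ with $\Delta$ radical, together with the Nullstellensatz applied over the algebraic closure, yields $I(P_{W\pi}) = \sqrt{\Delta} = \Delta$, completing the reduction and thus the corollary via Theorem \ref{thm:classical:main}. The main (minor) obstacle is the verification that the appropriate matrix $C \in \SL_n(\Z)$ exists making all the $\bil{H_a'}{\lambda}$ generic; but this is a routine genericity argument and actually easier than in the quantized case because the non-negativity condition needed there (to ensure $F_h = G_h$) is unnecessary in the classical setting.
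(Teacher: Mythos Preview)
Your proposal is correct and follows exactly the route the paper intends: the paper's own proof of Corollary~\ref{cor:classical:main} consists solely of the remark that it is ``proved by similar arguments as in the quantized case'' with ``details left to the reader,'' and you have supplied precisely those details, including the appropriate simplifications (no $h_+/h_-$ splitting, no non-negativity constraint on $C$) that the classical setting affords.
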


\section{Examples}\label{sec:ex} \noindent
We give here as motivation some specific applications of the main
results, Theorems \ref{thm:main} and \ref{thm:classical:main}. Not
only can we now treat the presentations from \cite{DG:PSA} and
\cite{DGS} by a unified approach, we are also able to include a
completely new example, namely the generalized Schur algebra coming
from the set of dominant weights of a tensor power of the spin
representation in type $B$.

Throughout this section, we write $e_{a,b}$ for the matrix unit
$e_{a,b} = (\delta_{ai}\delta_{bj})_{1 \le i,j, \le n}$.  In general,
$Y$ may be regarded as a sublattice of a Cartan subalgebra $\h$ of the
Lie algebra $\g$ and hence $X$ may be identified with a sublattice of
$\h^*$ (see \ref{U:classical}).

\subsection{Type $A_{n-1}$.}\label{ex:A:classical}

Let $\U = \U(\gl_n)$ be the universal enveloping algebra of the
general linear Lie algebra $\gl_n$. We take $X = Y = \Z^n$ with bases
$\{H_1, \dots, H_n \}$ of $Y$ and $\{\varepsilon_1, \dots,
\varepsilon_n \}$ of $X$, with pairing $\bil{H_a}{\varepsilon_b} =
\delta_{ab}$ for all pairs of indices $a,b$. The root datum is given
by $h_i = H_i - H_{i+1}$ and $\alpha_i = \varepsilon_i -
\varepsilon_{i+1}$, for all $i \in I = \{1, \dots, n-1\}$. The Cartan
datum is defined by $(i,j) = \delta_{ij}$ for all $i,j \in I$.  We may
identify $H_a$ with the matrix unit $e_{a,a}$ in $\gl_n$, for $a = 1,
\dots, n$.

Let $\pi$ be the set of dominant weights occurring in the $r$th tensor
power $V^{\otimes r}$ of the vector representation $V$.  The
corresponding set of points in $\Aff_\Q^n$ is precisely the set of
partitions of $r$ into $n$ parts (with 0 allowed), so $W\pi$
corresponds with the set of $n$-part compositions of $r$. The
vanishing ideal on this discrete set of points is generated by
\[
 (\textstyle\sum_a H_a) - r, \qquad
 H_a(H_a-1)\cdots (H_a-r)
\]
for $a=1, \dots, n$.  In the quantized case, this is replaced by
\[
 (\textstyle\prod_a K_{H_a}) - v^r, \qquad
 (K_{H_a}-1)(K_{H_a}-v)\cdots (K_{H_a}-v^r)
\]
for $a=1, \dots, n$. Combining these observations with the
presentation of $\U$ and $\UU$ given in \ref{def:Ualt} and
\ref{def:UUalt} respectively, we recover the main results of
\cite{DG:PSA}, obtaining generators and relations for the classical
Schur algebras and their $q$-analogues.

\subsection{Type $A_{n-1}$.} \label{ex:RSA}
We retain the root datum and notation of \ref{ex:A:classical}, except
now we take $\pi$ to be the set of dominant weights of the module
$V^{\otimes r} \otimes {V^*}^{\otimes s}$.  The example treated in
\ref{ex:A:classical} is just the special case $s = 0$. For general
$r,s$ the generalized Schur algebra of type $A_{n-1}$ determined by
$\pi$ is called a rational Schur algebra \cite{DD}.

The set $W\pi$ in this case corresponds to the set of points in
$\Aff_\Q^n$ whose components sum to $r-s$, with each proper partial
sum of components lying in the interval $[-s,r]$. (In case $s=0$ this
is just a composition of $r$ into $n$ parts.) The vanishing ideal of
this discrete point set is generated by
\[
 (\textstyle\sum_a H_a) - r + s, \qquad
 (P+s)(P+s-1)\cdots (P-r)
\]
for each proper partial sum $P$ of $H_a$ ($a =1, \dots, n$). Combined
with the presentation of $\U$ given in \ref{def:Ualt}, this gives a
presentation of the rational Schur algebra $S(n; r,s)$ studied in
\cite{DD}. A different (and simpler) presentation of $S(n;r,s)$ is
given in \cite[7.3]{DD}, but in that presentation the quotient map $\U
\to S(n;r,s)$ is not the natural one sending generators onto
generators.

\subsection{Type $B_n$}\label{ex:B_n}
In type $B_n$ we take $\U=\U(\so_{2n+1}(\Q))$, where the Lie algebra
$\so_{2n+1}(\Q)$ is defined relative to the bilinear form whose
matrix (in the standard basis) is
\[
\begin{pmatrix} 0&I_n&0\\I_n&0&0\\0&0&1\end{pmatrix}.
\]
The $e_{i,i} - e_{n+i,n+i}$ ($1 \le i \le n$) form a basis for
the diagonal Cartan subalgebra $\h$ of $\g$.

Take $X' = Y' = \Aff_\Q^n$ with fixed $\Q$-bases $\{ \varepsilon_i: 1 \le i
\le n \}$ of $X'$ and $\{ H_i: 1 \le i \le n \}$ of $Y'$. We define a
bilinear pairing $\bil{\ }{\ }:Y' \times X' \to \Q$ by the rule
$\bil{H_i}{\varepsilon_j} = \delta_{ij}$.  We take $X=\Z^n \cup
((\frac{1}{2}, \dots, \frac{1}{2}) + \Z^n) \subset X'$, with
$\Z$-basis given by
\[
\{ \varepsilon_i: 1 \le i \le n-1 \} \cup \{ \tfrac{1}{2}(\varepsilon_1
+ \cdots + \varepsilon_n)\} .
\]
The simple roots are given by
\[
\alpha_i = \varepsilon_i - \varepsilon_{i+1}\ \ (1 \le i \le n-1);
\qquad \alpha_n = \varepsilon_n
\]
and the simple coroots are
\[
h_i = H_i - H_{i+1}\ \ (1 \le i \le n-1);\qquad
h_n = 2H_n.
\]
The fundamental weights in this case are given by
\begin{equation*}
\varpi_i = \varepsilon_1 + \cdots + \varepsilon_i \quad (1\le i \le n-1),
\qquad \varpi_{n} = (\varepsilon_1 + \cdots + \varepsilon_{n})/2
\end{equation*}
and the set $\{ \varpi_1, \dots, \varpi_n \}$ forms another $\Z$-basis
of $X$.  We take $Y$ to be the $\Z$-span of the simple coroots $h_1,
\dots, h_n$ in $Y'$; it is easy to see that the restriction of the
pairing $\bil{\ }{\ }:Y' \times X' \to \Q$ gives a perfect pairing
$\bil{\ }{\ }:Y \times X \to \Z$ since $\bil{h_i}{\varpi_j} =
\delta_{ij}$ for all $1 \le i,j \le n$.  Thus the datum
$(X,\{\alpha_i\}, Y, \{h_i\})$ is a root datum of type $B_n$.  The
indexing set $I$ is $\{1, \dots, n\}$ and we may identify the $H_i$
with the elements $e_{i,i} - e_{n+i,n+i} \in \g$, for $i\in I$.

Let $\pi$ be the set $\Pi^+(V^{\otimes r})$, the set of dominant
weights occurring in a weight space decomposition of $V^{\otimes r}$,
where $V$ is the vector representation. Then $\pi$ is a saturated
subset of $X^+$ and the set $W\pi$ may be identified with the set of
all signed $n$-part compositions of $r-j$ for $0 \le j \le r$ (see
\cite[Proposition 1.3.1]{DGS}). By a signed $n$-part composition of
$r$ we mean a tuple $(\lambda_1, \dots, \lambda_n) \in \Z^n$ such that $\sum
|\lambda_i| = r$.  The vanishing ideal of this discrete point set in
$\Aff_\Q^n$ is generated by the elements
\begin{gather*}
(H_i + r)(H_i+r-1)(H_i+r-2)\cdots(H_i - r) \\
(J+r)(J+r-1)(J+r-2)\cdots(J-r+1)(J-r)
\end{gather*}
where $i \in I$ and $J = \pm H_1 \pm H_2 \pm \cdots \pm H_n$ varies
over all the $2^n$ possible sign choices. Combined with the
presentation of $\U$ given in \ref{def:Ualt}, these observations
recover the presentation of $S(\pi)$ given in \cite[Theorem
2.1.1]{DGS}.

We note that the module $V^{\otimes r}$ is {\em not} in general
saturated in type $B$; see \cite[Remark 1.3.4]{DGS}.

To treat the $q$-analogue $\Sch(\pi)$ one would need to first express
each $H_i$ in terms of the integral basis $h_1, \dots, h_n$ of $Y$ and
then rewrite the above elements in terms of the $h_i$ with
denominators cleared. This leads to relations which are rather
unpleasantly non uniform, but which are easily quantizable.
Another approach is to use the set of relations coming from
Corollary \ref{cor:main}.

\subsection{Type $D_n$}\label{ex:D_n}
In type $D_n$ we take $\U=\U(\so_{2n}(\Q))$, where the Lie algebra
$\so_{2n}(\Q)$ is defined relative to the bilinear form whose matrix
(in the standard basis) is
\[
\begin{pmatrix} 0&I_n\\I_n&0\end{pmatrix}.
\]
The $e_{i,i} - e_{n+i,n+i}$ ($1 \le i \le n$) form a basis for
the diagonal Cartan subalgebra $\h$ of $\g$.

Take $X'$, $Y'$ the same as defined in \ref{ex:B_n}, with the same
pairing $\bil{\ }{\ }: Y' \times X' \to \Q$ and the fixed $\Q$-bases
$\{ \varepsilon_i : 1\le i \le n \}$, $\{ H_i : 1\le i \le n \}$ of
$Y'$, $X'$ respectively such that $\bil{H_i}{\varepsilon_j} =
\delta_{ij}$.

We take $X=\Z^n \cup ((\frac{1}{2},\dots,\frac{1}{2})+\Z^n)$ just the
same as in \ref{ex:B_n}, with the same $\Z$-basis $\{ \varepsilon_i: 1
\le i \le n-1 \} \cup \{ \frac{1}{2}(\varepsilon_1 + \cdots +
\varepsilon_n)\}$.  The simple roots are in this case given by
\[
\alpha_i = \varepsilon_i - \varepsilon_{i+1}\ \ (1 \le i \le n-1);
\qquad \alpha_n = \varepsilon_{n-1}+\varepsilon_n
\]
and the simple coroots are given by
\[
h_i = H_i - H_{i+1}\ \ (1 \le i \le n-1);\qquad
h_n = H_{n-1}+H_n.
\]
The fundamental weights are in this case given by
\begin{equation*}
\begin{gathered}
\varpi_i = \varepsilon_1 + \cdots + \varepsilon_i \quad (1\le i \le
n-2),\\ \varpi_{n-1} = (\varepsilon_1 + \cdots +\varepsilon_{n-1} -
\varepsilon_{n})/2, \quad \varpi_{n} = (\varepsilon_1 + \cdots +
\varepsilon_{n})/2.
\end{gathered}
\end{equation*}
and the set $\{ \varpi_1, \dots, \varpi_n \}$ is a $\Z$-basis of $X$.
We define $Y$ to be the $\Z$-span of the simple coroots $h_1, \dots,
h_n$.  The indexing set $I$ is $\{1, \dots, n\}$ and we may identify
the $H_i$ with the elements $e_{i,i} - e_{n+i,n+i} \in \g$, for $i\in
I$. Since $\bil{h_i}{\varpi_j} = \delta_{ij}$ for all $1 \le i,j \le
n$ one sees immediately that $(X, \{\alpha_i\}, Y, \{h_i\})$ is a root
datum of type $D_n$.

Let $\pi$ be the set $\Pi^+(V^{\otimes r})$, the set of dominant
weights occurring in a weight space decomposition of $V^{\otimes r}$,
where $V$ is the vector representation. Then $\pi$ is a saturated
subset of $X^+$ and the set $W\pi$ may be identified with the set of
all signed $n$-part compositions of $r-2j$ for $0 \le j \le [r/2]$
(see \cite[Proposition 1.3.1]{DGS}).  The vanishing ideal of this
discrete point set in $\Aff_\Q^n$ is generated by the elements
\begin{gather*}
(J+r)(J+r-2)(J+r-4)\cdots(J-r+2)(J-r)
\end{gather*}
where $i \in I$ and $J = \pm H_1 \pm H_2 \pm \cdots \pm H_n$ varies
over all the $2^n$ possible sign choices.  Combined with the
presentation of $\U$ given in \ref{def:Ualt}, these observations
recover the presentation of $S(\pi)$ given in \cite[Theorem
2.3.1]{DGS}.

We note that the module $V^{\otimes r}$ is saturated in type $D$; see
\cite[Proposition 1.3.3]{DGS}.

To treat the $q$-analogue $\Sch(\pi)$ one faces precisely the same
difficulty as discussed at the end of \ref{ex:B_n}. Again one needs to
express each $H_i$ in terms of the integral basis $h_1, \dots, h_n$ of
$Y$ and then rewrite the above elements in terms of the $h_i$ with
denominators cleared.  As in type $B$, another approach would be to
appeal to Corollary \ref{cor:main}.

\subsection{Type $C_n$}\label{ex:C_n}
In type $C_n$ we take $\U=\U(\sp_{2n}(\Q))$, where the Lie algebra
$\sp_{2n}(\Q)$ is defined relative to the bilinear form whose
matrix (in the standard basis) is
\[
\begin{pmatrix} 0&-I_n\\-I_n&0\end{pmatrix}.
\]
The $e_{i,i} - e_{n+i,n+i}$ ($1 \le i \le n$) form a basis for
the diagonal Cartan subalgebra $\h$ of $\g$.

We take $Y=\Z^n$ and $X=\Z^n$, and we choose bases $\{ H_i: 1 \le i
\le n \}$ of $Y$ and $\{ \varepsilon_i: 1 \le i \le n \}$ of $X$, with
pairing given by $\bil{H_i}{\varepsilon_j} = \delta_{ij}$. The simple
roots are the
\[
\alpha_i = \varepsilon_i - \varepsilon_{i+1}\ \ (1 \le i \le n-1);
\qquad \alpha_n = 2\varepsilon_n
\]
and the simple coroots are
\[
h_i = H_i - H_{i+1}\ \ (1 \le i \le n-1);\qquad
h_n = H_n.
\]
The indexing set is $I = \{1, \dots, n\}$, and we may identify the
$H_i$ with the elements $e_{i,i} - e_{n+i,n+i}$, for $i\in I$.

Let $\pi$ be the set $\Pi^+(V^{\otimes r})$, the set of dominant
weights occurring in a weight space decomposition of $V^{\otimes r}$,
where $V$ is the vector representation. Then $\pi$ is a saturated
subset of $X^+$ and the set $W\pi$ may be identified with the set of
all signed $n$-part compositions of $r-2j$ for $0 \le j \le [r/2]$
(see \cite[Proposition 1.3.1]{DGS}).  The vanishing ideal of this
discrete point set in $\Aff_\Q^n$ is generated by the elements
\begin{gather*}
(J+r)(J+r-2)(J+r-4)\cdots(J-r+2)(J-r)
\end{gather*}
where $J = \pm H_1 \pm H_2 \pm \cdots \pm H_n$ varies over all the
$2^n$ possible sign choices.  Combined with the presentation of $\U$
given in \ref{def:Ualt}, these observations recover the presentation
of $S(\pi)$ given in \cite[Theorem 2.2.1]{DGS}.

We note that the module $V^{\otimes r}$ is saturated in type $C$; see
\cite[Proposition 1.3.3]{DGS}.

In this case it is easy to treat the $q$-analogue, since the set $\{
H_1, \dots, H_n \}$ is a $\Z$-basis of $Y$. So the defining ideal
$\Lambda$ of $\Sch(\pi)$ is generated by the elements
\begin{gather*}
(K_J-v^r)(K_J-v^{r-2})(K_J-v^{r-4})\cdots(K_J-v^{-r+2})(K_J-v^{-r})
\end{gather*}
where $J = \pm H_1 \pm H_2 \pm \cdots \pm H_n$ varies over all the
$2^n$ possible sign choices.

\subsection{Type $B_n$ --- spin module}\label{ex:B_n:Spin}
We treat an entirely new example in this subsection. Retain the root
datum as defined in \ref{ex:B_n}, except now we take $\pi$ to be the
set $\Pi^+(S^{\otimes r})$, the set of dominant weights occurring in a
weight space decomposition of $S^{\otimes r}$, where $S$ is the spin
representation of $\so_{2n+1}(\Q)$.  We note that the module
$S^{\otimes r}$ is saturated in type $B$; see Appendix~\ref{app:spin}.

The set $W\pi$ in case $r=1$ is the set $\Pi(S)$ of weights of $S$;
the $\lambda$ in this set are precisely the elements of $X$ satisfying
the condition $\bil{H_i}{\lambda} = \pm \frac{1}{2}$ for all $i \in
I$. (The Weyl group $W$ acts through signed permutations.)

For general $r$ the description of $W\pi$ divides naturally into two
cases, depending on the parity of $r$. If $r = 2m$ is even, then one
sees easily by a simple induction that the set $W\pi$ consists of
those $\lambda \in X$ such that $\bil{H_i}{\lambda} \in \{ 0, \pm 1,
\dots, \pm m \}$ for each $i \in I$. If $r = 2m+1$ then $W\pi$
consists of those $\lambda \in X$ such that $\bil{H_i}{\lambda} \in
\pm\frac{1}{2} + \{ 0, \pm 1, \dots, \pm m \}$ for each $i \in I$.

The vanishing ideal of this discrete point set in $\Aff_\Q^n$ is generated
by the elements
\[
(H_i + \tfrac{r}{2})(H_i+\tfrac{r}{2}-1)\cdots(H_i - \tfrac{r}{2}+1)(H_i
- \tfrac{r}{2})
\]
where $i \in I$.  Combined with the presentation of $\U$ given in
\ref{def:Ualt}, this gives the following description. The generalized
Schur algebra $S(\pi)$ is the associative algebra with 1 over $\Q$
generated by
\[
e_i, f_i, H_i \quad (i = 1, \dots, n)
\]
and satisfying the relations

(a) $H_i H_j = H_j H_i$;

(b) $H_i e_j - e_j H_i = \bil{H_i}{\alpha_j} e_j$;

(c) $H_i f_j - f_j H_i = -\bil{H_i}{\alpha_j} f_j$;

(d) $e_i f_j - f_j e_i = \delta_{ij} h_i$;

(e) $\displaystyle \sum_{s+s'=1-\bil{h_i}{\alpha_j}} (-1)^{s'}
\divided{e_i}{s}e_j \divided{e_i}{s'} = 0$ ($i \ne j$);

(f) $\displaystyle \sum_{s+s'=1-\bil{h_i}{\alpha_j}} (-1)^{s'}
\divided{f_i}{s}f_j \divided{f_i}{s'} = 0$ ($i \ne j$);

(g) $(H_i + \frac{r}{2})(H_i+\frac{r}{2}-1)\cdots(H_i - \frac{r}{2}+1)(H_i
- \frac{r}{2}) = 0$

\noindent
for all $i,j \in I = \{1, \dots, n\}$. Here $\divided{f_i}{s} =
f_i^s/(s!)$, $\divided{e_i}{s} = e_i^s/(s!)$ are the usual divided
powers.

Treating the $q$-analogue $\Sch(\pi)$ one faces difficulties already
discussed in previous examples, and probably the best one can do is to
appeal to Corollary \ref{cor:main}.

\appendix
\section{Spin representation in type $B$} \label{app:spin}\noindent
The purpose of this appendix is to prove that the tensor powers of the
spin representation in type $B$ are always saturated modules, in the
sense defined in the introduction.

Retain the root datum introduced in \ref{ex:B_n}.  This is the root
datum for the simply-connected covering group $\mathsf{Spin}(2n+1)$.
The fundamental weights for the system are $\varpi_1=\varepsilon_1$,
$\varpi_2 = \varepsilon_1 + \varepsilon_2, \dots, \varpi_{n-1} =
\varepsilon_1 + \cdots + \varepsilon_{n-1}$, $\varpi_n = \frac{1}{2}
(\varepsilon_1 + \cdots + \varepsilon_n)$. A basis for the set of
dominant weights for the special orthogonal group $\mathsf{SO}(2n+1)$
is $\varpi_1$, $\varpi_2, \dots, \varpi_{n-1}$, $2\varpi_n$; i.e.,
the dominant weights for the irreducible representations of
$\mathsf{SO}(2n+1)$ are the non-negative integral combinations of those
weights.

For each non-negative integral combination $\omega = t_1\varpi_1 +
\cdots + t_{n-1}\varpi_{n-1} + t_n2\varpi_n $, let $|\omega| =
\sum_{i=1}^nt_i$.

\begin{lem} \label{lem:C:1}
For each simple root $\alpha_i$, $|\omega -\alpha_i|\le |\omega|$.
\end{lem}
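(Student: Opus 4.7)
The plan is to express each simple root $\alpha_i$ in the $\Z$-basis $\eta_1,\ldots,\eta_n$ defined by $\eta_j = \varpi_j$ for $1 \le j \le n-1$ and $\eta_n = 2\varpi_n$, and to observe that the sum of $\eta$-coefficients is non-negative in every case. Extending $|\,\cdot\,|$ by linearity to the full $\Z$-span of the $\eta_j$ gives $|\omega - \alpha_i| = |\omega| - |\alpha_i|$, so the claim reduces to checking that $|\alpha_i| \ge 0$ for each $i \in I = \{1,\ldots,n\}$.

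First I would use the duality $\bil{h_j}{\varpi_k} = \delta_{jk}$ to write $\alpha_i = \sum_{j=1}^n \bil{h_j}{\alpha_i}\,\varpi_j$, so that the $\varpi$-coefficients of $\alpha_i$ are the $(j,i)$-entries of the type $B_n$ Cartan matrix. The only potentially non-integral conversion into the $\eta$-basis occurs at $j=n$, since $\varpi_n = \tfrac12 \eta_n$; however in type $B_n$ the entry $\bil{h_n}{\alpha_i}$ is automatically even for every $i$ (it equals $-2$ when $i = n-1$, equals $+2$ when $i = n$, and is zero otherwise, using $h_n = 2H_n$), so the $\eta_n$-coefficient of $\alpha_i$ is always an integer. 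This is precisely why the lemma is formulated for $\mathsf{SO}(2n+1)$, using the basis $\varpi_1,\ldots,\varpi_{n-1},2\varpi_n$, rather than for the spin lattice.

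Tabulating the results gives
\begin{gather*}
\alpha_1 = 2\eta_1 - \eta_2, \\
\alpha_i = -\eta_{i-1} + 2\eta_i - \eta_{i+1} \qquad (2 \le i \le n-2), \\
\alpha_{n-1} = -\eta_{n-2} + 2\eta_{n-1} - \eta_n, \\
\alpha_n = -\eta_{n-1} + \eta_n,
\end{gather*}
with the obvious degenerations when $n$ is small. The coefficient sum equals $1$ when $i = 1$ and $0$ when $2 \le i \le n$, so $|\alpha_i| \ge 0$ in every case, which yields the desired inequality $|\omega - \alpha_i| \le |\omega|$. There is no real obstacle here beyond careful bookkeeping; the one subtle point is the integrality of the $\eta_n$-coefficient, which is forced by the type $B$ data $\bil{h_n}{\alpha_{n-1}} = -2$ and $\bil{h_n}{\alpha_n} = 2$.
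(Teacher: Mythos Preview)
Your proof is correct and is essentially the same as the paper's. Both arguments reduce to computing, for each $i$, the quantity $\sum_{j=1}^{n-1}\bil{h_j}{\alpha_i} + \tfrac12\bil{h_n}{\alpha_i}$ and observing that it equals $1$ when $i=1$ and $0$ otherwise; you do this by first writing each $\alpha_i$ explicitly in the $\eta$-basis and then summing coefficients, while the paper evaluates the linear functional $h_1+\cdots+h_{n-1}+\tfrac12 h_n$ directly on $\alpha_i$, but the content is identical.
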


\begin{proof} Suppose that $\omega =m_1\varpi_1+\cdots
+m_{n-1}\varpi_{n-1}+m_n2\varpi_n$, and that $\omega
-\alpha_j=k_1\varpi_1+\cdots +k_{n-1}\varpi_{n-1}+k_n2\varpi_n$.
We need to show that $\sum_{j=1}^n k_j\le m =\sum_{j=1}^n m_j$.

We have
\[
m=m_1 +\cdots + m_n=\bil{h_1}{\omega }+\cdots \bil{h_{n-1}}{\omega }+
\frac{\bil{h_n}{\omega }}{2}
\]
and
\begin{align*}
k_1 + \cdots + k_n &= \bil{ h_1}{\omega -\alpha_i} + \cdots \bil{
h_{n-1}}{\omega-\alpha_i}+ \frac{\bil{ h_n}{\omega -\alpha_i}}{2} \\ &=
m-\bil{h_1}{(\alpha_i }+\cdots \bil{h_{n-1}}{\alpha_i}+
\frac{\bil{h_n}{\alpha_i } }{2}.
\end{align*}

Using the Cartan datum for $B_n$, we see that $\bil{h_1}{\alpha_1 } +
\cdots \bil{h_{n-1}}{\alpha_1 }+ \frac{\bil{h_n}{\alpha_1 }}{2}=1$ and
that $\bil{h_1}{\alpha_i }+ \cdots + \bil{h_{n-1}}{\alpha_i}+
\frac{\bil{h_n}{\alpha_i }}{2}=0$, for $i>1$.  Hence, $k_1+\dots
+k_n=m $ or $k_1+\dots +k_n=m-1$.
\end{proof}

For each non-negative integer $m$, consider the set $W_m$ of dominant
weights (for $\mathsf{SO}(2n+1)$) of the form $\omega
=m_1\varpi_1+\cdots +m_{n-1}\varpi_{n-1}+m_n2\varpi_n$ with
$\sum_{j=1}^nm_j=m$ (and each $m_j\ge 0$). Those are the dominant
weights $\omega $ for $\mathsf{SO}(2n+1)$ such that $|\omega |=m$.

\begin{lem}\label{lem:C:2}
For each non-negative integer m, the set of dominant weights
$\bigcup_{j=0}^mW_j$ is saturated.
\end{lem}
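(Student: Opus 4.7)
The plan is to derive saturation from the inequality $|\omega'| \le |\omega|$, which we will establish whenever $\omega' \le \omega$ with $\omega$ in $\bigcup_{j=0}^m W_j$ and $\omega' \in X^+$. The first step is to extend the notation $|\cdot|$ (defined in the excerpt only for $\mathsf{SO}(2n+1)$-dominant weights) to arbitrary $\omega \in X$ by the $\Q$-linear formula
\[
|\omega| = \bil{h_1}{\omega} + \cdots + \bil{h_{n-1}}{\omega} + \tfrac{1}{2}\bil{h_n}{\omega},
\]
which agrees with the original definition on $\mathsf{SO}(2n+1)$-dominant weights. The computation appearing in the proof of Lemma \ref{lem:C:1} then yields $|\alpha_1| = 1$ and $|\alpha_i| = 0$ for $i > 1$, so that writing $\omega - \omega' = \sum c_i \alpha_i$ with $c_i \in \N$ gives $|\omega'| = |\omega| - c_1 \le |\omega|$ directly, with no detour through a dominant chain.

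Next, given $\omega \in \bigcup_{j \le m} W_j$ and $\omega' \in X^+$ with $\omega' \le \omega$, we will verify that $\omega'$ actually lies in the $\mathsf{SO}(2n+1)$-sublattice of $X$, so that $\omega' \in W_{|\omega'|}$. Since $\omega$ lies in the $\mathsf{SO}$-sublattice, $\bil{h_n}{\omega}$ is an even integer; and for each $i$ the Cartan matrix of type $B_n$ yields $\bil{h_n}{\alpha_i} \in 2\Z$ (it equals $0$ for $i \le n-2$, $-2$ for $i = n-1$, and $2$ for $i = n$). Hence $\bil{h_n}{\omega'} = \bil{h_n}{\omega} - \sum c_i \bil{h_n}{\alpha_i}$ is even, placing $\omega'$ in the $\mathsf{SO}$-sublattice; combined with $\omega' \in X^+$ this makes $\omega'$ an $\mathsf{SO}(2n+1)$-dominant weight. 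Applying the bound from the previous paragraph gives $|\omega'| \le |\omega| \le m$, so $\omega' \in W_{|\omega'|} \subset \bigcup_{j' \le m} W_{j'}$, which is precisely the saturation property.

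No substantial obstacle is anticipated: the lemma amounts to packaging Lemma \ref{lem:C:1} in the right way. The only subtle point is that Lemma \ref{lem:C:1} as stated treats a \emph{single} subtraction of a simple root from a dominant weight, whereas saturation allows $\omega' = \omega - \sum c_i\alpha_i$ with no control over the intermediate partial sums. Extending $|\cdot|$ to all of $X$ via the displayed $\Q$-linear formula sidesteps this issue entirely, and reduces the remaining work to the parity check on the $\bil{h_n}{\alpha_i}$ that keeps $\omega'$ inside the $\mathsf{SO}(2n+1)$-sublattice.
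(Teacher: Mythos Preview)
Your proof is correct and follows the same line as the paper's: both derive the upper bound $|\omega'| \le |\omega|$ from the computation in Lemma~\ref{lem:C:1}, then use dominance of $\omega'$ for the lower bound $|\omega'| \ge 0$. Your explicit $\Q$-linear extension of $|\cdot|$ to all of $X$ and the parity check placing $\omega'$ in the $\mathsf{SO}(2n+1)$-sublattice fill in details the paper leaves implicit (the paper simply invokes Lemma~\ref{lem:C:1} for $\omega - \sum t_j\alpha_j$ without commenting on intermediate steps), but the approach is the same.
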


\begin{proof} Let $\omega\in W_m$. Then $\omega =m_1\varpi_1+\cdots
+m_{n-1}\varpi_{n-1}+m_n2\varpi_n$, where each $m_i\ge 0$, and
$m=\sum_{j=1}^nm_j=|\omega |$. Suppose that
$\omega-\sum_{j=1}^nt_j\alpha_j$ is a dominant weight where each
$t_i$ is a non-negative integer. We need to show that
$\omega-\sum_{j=1}^nt_j\alpha_j$ is an element of some $W_k$ with
$0\le k\le m$, i.e., that $0\le |\omega-\sum_{j=1}^nt_j\alpha_j|\le
m$. Because $\omega-\sum_{j=1}^nt_j\alpha_j$ is dominant, $0\le
|\omega-\sum_{j=1}^nt_j\alpha_j|$, and by Lemma \ref{lem:C:1} we
have $|\omega-\sum_{j=1}^nt_j\alpha_j|\le m$.
\end{proof}

Let $S$ be the irreducible spin representation of highest weight
$\varpi_n$, and let $V$ be the vector representation of
$\mathsf{SO}(2n+1)$, of highest weight $\varpi_1$. Let $S^{\otimes k}$
be the $k$th tensor power of $S$ and let $\wedge^kV$ be the $k$th
exterior power of $V$. Note that $S^{\otimes 2}$ may be regarded as a
module for $\mathsf{SO}(2n+1)$; i.e., the representation of
$\mathsf{Spin}(2n+1)$ on $S\otimes S$ factors through
$\mathsf{SO}(2n+1)$.

\begin{lem} \label{lem:C:3}
$S\otimes S$ is the sum $k + V + \wedge^2V+ \cdots
+ \wedge^{n-1}V + \wedge^nV$ of irreducible modules (for
$\mathsf{SO}(2n+1)$) of highest weights $0$, $\varpi_1$, $\varpi_2,
\dots, \varpi_{n-1}$, and $2\varpi_n$.
\end{lem}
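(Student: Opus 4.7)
The plan is to verify the claimed decomposition by comparing formal characters, using the fact that finite-dimensional $\mathsf{SO}(2n+1)$-modules in characteristic zero are semisimple and are determined up to isomorphism by their characters.

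Writing $x_i = e^{\varepsilon_i}$, the weights of the spin module $S$ are the $2^n$ elements $\tfrac{1}{2}(\pm\varepsilon_1 \pm \cdots \pm \varepsilon_n)$, each occurring with multiplicity one, so
\[
\ch(S) = \prod_{i=1}^n \bigl(x_i^{1/2} + x_i^{-1/2}\bigr),
\qquad
\ch(S \otimes S) = \ch(S)^2 = \prod_{i=1}^n \bigl(2 + x_i + x_i^{-1}\bigr).
\]
Meanwhile the vector representation $V$ has weights $0$ and $\pm\varepsilon_i$ ($1 \le i \le n$), each with multiplicity one, which yields the generating identity
\[
\sum_{k=0}^{2n+1} t^k\, \ch(\wedge^k V) = (1+t) \prod_{i=1}^n (1+tx_i)(1+tx_i^{-1}).
\]
Setting $t=1$ and using $(1+x_i)(1+x_i^{-1}) = 2 + x_i + x_i^{-1}$, the right-hand side equals $2\,\ch(S \otimes S)$.

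Next I would invoke the Hodge-star isomorphism $\wedge^k V \simeq \wedge^{2n+1-k} V$ of $\mathsf{SO}(2n+1)$-modules, provided by the nondegenerate invariant bilinear form and an invariant top form on $V$. Pairing $k$ with $2n+1-k$ for $0 \le k \le n$ gives $\sum_{k=0}^{2n+1} \ch(\wedge^k V) = 2 \sum_{k=0}^n \ch(\wedge^k V)$, and combining this with the previous step produces
\[
\ch(S \otimes S) = \sum_{k=0}^n \ch(\wedge^k V).
\]
As a dimension sanity check, $\dim(S \otimes S) = 2^{2n}$ and $\sum_{k=0}^n \binom{2n+1}{k} = 2^{2n}$ by binomial symmetry.

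Since each $\wedge^k V$ ($0 \le k \le n$) is an irreducible $\mathsf{SO}(2n+1)$-module with highest weight $\varpi_k$ for $1 \le k \le n-1$ and $2\varpi_n$ for $k=n$, complete reducibility of $S \otimes S$ promotes the character identity to an isomorphism of $\mathsf{SO}(2n+1)$-modules, which is precisely the claim. The main obstacle is not computational but rather the appeal to two standard facts in type $B$ — Hodge duality among the exterior powers of $V$, and the irreducibility of $\wedge^k V$ (including the $k=n$ case, which fails in type $D$) — both of which are classical and can be cited from any standard reference on representations of orthogonal groups.
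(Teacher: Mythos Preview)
Your argument is correct: the character computation, the generating-function identity for $\ch(\wedge^k V)$, the use of Hodge duality $\wedge^k V \simeq \wedge^{2n+1-k} V$, and the appeal to irreducibility of $\wedge^k V$ in type $B$ all go through as stated, and together they establish the decomposition.

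The paper's own proof, however, is simply the one-line remark ``This is well known.'' So there is no meaningful comparison of methods to make: you have supplied an explicit self-contained verification where the paper merely cites the result as standard. Your version is strictly more informative, though it does rely (as you acknowledge) on two background facts --- Hodge duality and the irreducibility of the exterior powers in type $B$ --- which themselves would be cited from a standard reference. That is entirely appropriate here.
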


\begin{proof} This is well known.  \end{proof}

\begin{prop} \label{prop:C:4}
The set of highest weights of the irreducible factors of $S^{\otimes
2m}$ is $\bigcup_{j=0}^mW_j$.
\end{prop}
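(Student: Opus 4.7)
The plan is to proceed by induction on $m$. The base case $m=0$ is trivial since $S^{\otimes 0}$ is one-dimensional with highest weight $0\in W_0$; the case $m=1$ is Lemma \ref{lem:C:3} directly, as its list $0,\varpi_1,\dots,\varpi_{n-1},2\varpi_n$ of highest weights exactly exhausts $W_0\cup W_1$. For the inductive step I would assume the proposition for $m$ and factor
\[
S^{\otimes 2(m+1)} = S^{\otimes 2m}\otimes (S\otimes S);
\]
combining the inductive hypothesis with Lemma \ref{lem:C:3}, every irreducible constituent $L(\lambda)$ of $S^{\otimes 2(m+1)}$ must occur inside some $L(\omega)\otimes L(\omega')$ with $\omega\in\bigcup_{j=0}^m W_j$ and $\omega'\in W_0\cup W_1$.

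For the inclusion ``$\subseteq\bigcup_{j=0}^{m+1} W_j$'', such an $L(\lambda)$ satisfies $\lambda\le\omega+\omega'$, i.e., $\lambda = \omega+\omega' - \sum_j t_j\alpha_j$ with $t_j\in\mathbb{Z}_{\ge 0}$. Extending $|\cdot|$ by $\mathbb{Z}$-linearity to the full weight lattice (via the basis $\varpi_1,\dots,\varpi_{n-1},2\varpi_n$), the computation in the proof of Lemma \ref{lem:C:1} shows that $|\alpha_j|\ge 0$ for every simple root, so $|\lambda|\le |\omega|+|\omega'|\le m+1$; together with the dominance of $\lambda$ this places it in $\bigcup_{j=0}^{m+1}W_j$.

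For the reverse inclusion, Lemma \ref{lem:C:3} exhibits the trivial summand $L(0)\subset S\otimes S$, so $S^{\otimes 2m}$ embeds as a direct summand of $S^{\otimes 2(m+1)}$ and the inductive hypothesis contributes all of $\bigcup_{j=0}^m W_j$ as highest weights. To produce each $\omega = t_1\varpi_1+\cdots+t_{n-1}\varpi_{n-1}+t_n(2\varpi_n)\in W_{m+1}$ as a highest weight, I would pick any index $k$ with $t_k>0$ and set $\omega_k := \varpi_k$ if $k<n$, or $\omega_k := 2\varpi_n$ if $k=n$. Then $\omega-\omega_k\in W_m$, so $L(\omega-\omega_k)$ is a summand of $S^{\otimes 2m}$ by induction, and $L(\omega_k)$ is a summand of $S\otimes S$ by Lemma \ref{lem:C:3}. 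The Cartan component theorem (for any dominant $\mu,\nu$, the irreducible $L(\mu+\nu)$ is a direct summand of $L(\mu)\otimes L(\nu)$ with multiplicity one) then exhibits $L(\omega)$ as a summand of $L(\omega-\omega_k)\otimes L(\omega_k)\subset S^{\otimes 2(m+1)}$. The only external input is the Cartan component theorem, which is a standard fact for semisimple Lie algebras; everything else is direct computation using the preceding lemmas of this appendix.
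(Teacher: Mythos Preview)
Your proof is correct and follows essentially the same inductive strategy as the paper: factor $S^{\otimes 2(m+1)} = S^{\otimes 2m}\otimes(S\otimes S)$, use Lemma~\ref{lem:C:3} for the base case and the structure of $S\otimes S$, bound $|\lambda|$ via Lemma~\ref{lem:C:1} for the inclusion $\subseteq$ (the paper packages this step as an appeal to Lemma~\ref{lem:C:2}), and realize each $\omega\in W_{m+1}$ as the top component of a tensor product $L(\omega-\omega_k)\otimes L(\omega_k)$ for the inclusion $\supseteq$. The only differences are cosmetic: you name the Cartan component theorem explicitly, whereas the paper simply observes that the highest weight of $N\otimes M$ is the highest weight of an irreducible summand.
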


\begin{proof}
The highest weights $\omega$ of the irreducible modules in $S\otimes
S$ lie in $W_0\cup W_1$, by Lemma \ref{lem:C:3}. We claim that the
highest weight $\omega $ of an irreducible module in the even tensor
power $S^{\otimes 2m}$ lie in $\bigcup_{j=0}^mW_j$. In fact, as we
observed just above, that is true for $m=1$. Assume that it is true
for $S^{\otimes (2m-2)}$; i.e., that $\omega\in \bigcup_{j=0}^{m-1}W_j$
for the highest weight $\omega $ of each irreducible submodule of
$S^{\otimes (2m-2)}$. The irreducible submodules of $S^{\otimes 2m}$
occur as submodules of an irreducible factor $M$ of $S\otimes S$ and
an irreducible factor $N$ of $S^{\otimes (2m-2)}$. By the case $m=1$
and by the inductive hypothesis, if $\omega $ equals the highest
weight of $M\otimes N$, then $\omega\in \bigcup_{j=0}^mW_j$. Any other
dominant weight of $M\otimes N$ has the form $\omega
-\sum_{j=1}^nd_i\alpha_i $, for non-negative integers $d_i$. By
Lemma \ref{lem:C:2}, $\omega -\sum_{j=1}^nd_i\alpha_i $ lies in
$\bigcup_{j=0}^mW_j$. Hence, the irreducible submodules of $S^{\otimes
2m}$ have highest weights in $\bigcup_{j=0}^mW_j$.

We show next that all dominant weights $\omega $ for
$\mathsf{SO}(2n+1)$ which lie in $\bigcup_{j=0}^mW_j$ occur as the
highest weights of irreducible submodules of $S^{\otimes 2m}$. Lemma \ref{lem:C:2} 
establishes that result for $m=1$. Suppose that the
result holds for $m-1$, with $m\ge 2$. Then each weight $\omega $ in
$\bigcup_{j=0}^{m-1}W_j$ occurs as the highest weight of an irreducible
submodule of $S^{\otimes (2m-2)}$, and because $S^{\otimes (2m-2)}$
occurs as a submodule of $S^{\otimes 2m} = S\otimes S \otimes
S^{\otimes (2m-2)}$ (since $k$ occurs as a submodule of $S\otimes S$),
each weight $\omega $ in $\bigcup_{j=0}^{m-1}W_j$ occurs as the highest
weight of an irreducible submodule of $S^{\otimes 2m}$ also.  Next
take any weight $\omega $ in $W_m$. Write $\omega $ as either $\delta
+\varpi_j$ for some $j<m$ or as $\omega =\delta +2\varpi_n$, for
some dominant weight $\delta $. We have $\delta\in W_{m-1}$, and so
$\delta $ is the highest weight of some irreducible submodule $N$ of
$S^{\otimes (2m-2)}$, and $\varpi_j$ (or $2\varpi_n$ as the case may
be) is the highest weight of some irreducible submodule $M$ of
$S\otimes S$. Hence, $\omega $ is the highest weight of $N\otimes M$
of $S^{\otimes 2m}$, and so it is the highest weight of an irreducible
submodule of the submodule $N\otimes M$ of $S^{\otimes 2m}$.
\end{proof}

For weights of the form $\varpi_n+\omega $, where $\omega
=t_1\varpi_1+\cdots +t_{n-1}\varpi_{n-1}+t_n2\varpi_n$, for integers
$t_i$, let $|\varpi_n+\omega |$ equal the value $|\omega|$.

\begin{lem} \label{lem:C:5}
The set of dominant weights of the form $\varpi_n +\omega $, for
$\omega\in \bigcup_{j=0}^mW_j$, is a saturated set.
\end{lem}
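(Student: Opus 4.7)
The goal is to prove saturation of $\pi' := \{\varpi_n + \omega \mid \omega \in \bigcup_{j=0}^m W_j\}$. That is, starting from $\lambda = \varpi_n + \omega$ in $\pi'$ and a dominant $\lambda' \in X^+$ with $\lambda - \lambda' = \sum_i t_i \alpha_i$, $t_i \in \N$, I will show $\lambda' \in \pi'$. The natural strategy is to set $\omega' := \lambda' - \varpi_n$ and show that it lies in $\bigcup_{j=0}^m W_j$, at which point Lemma~\ref{lem:C:2} does the rest.

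The first step is a congruence argument in the root lattice $Q$. All of $\varpi_1, \dots, \varpi_{n-1}, 2\varpi_n$ lie in $Q$ (in type $B_n$ one has $\varpi_i = \varepsilon_1 + \cdots + \varepsilon_i \in Q$ for $i<n$ and $2\varpi_n = \varepsilon_1 + \cdots + \varepsilon_n \in Q$), so $\omega \in Q$; consequently $\lambda \equiv \varpi_n \pmod Q$, and since $\lambda - \lambda' \in Q$ we also have $\lambda' \equiv \varpi_n \pmod Q$. Writing $\lambda' = \sum_i k_i \varpi_i$ with $k_i \in \N$, this congruence forces $k_n$ to be odd, hence $k_n \ge 1$. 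Therefore $\omega' = \lambda' - \varpi_n \in Q$ and
\[
\bil{h_i}{\omega'} = k_i - \delta_{i,n} \ge 0 \quad \text{for all } i \in I,
\]
so $\omega'$ is a dominant weight for $\mathsf{SO}(2n+1)$, i.e., a non-negative integer combination of $\varpi_1, \dots, \varpi_{n-1}, 2\varpi_n$.

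Now the key reduction: from $\omega = \lambda - \varpi_n$ and $\omega' = \lambda' - \varpi_n$ we obtain
\[
\omega' = \omega - \sum_i t_i \alpha_i,
\]
with the $t_i$ non-negative integers and $\omega'$ already known to be $\mathsf{SO}(2n+1)$-dominant. Since $\omega \in \bigcup_{j=0}^m W_j$, Lemma~\ref{lem:C:2} applies verbatim to yield $\omega' \in \bigcup_{j=0}^m W_j$. Hence $\lambda' = \varpi_n + \omega'$ lies in $\pi'$, establishing saturation.

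The only substantive point is Step 2, the check that $\omega'$ is $\mathsf{SO}(2n+1)$-dominant; this is what prevents a direct two-line reduction to Lemma~\ref{lem:C:2}. The potential obstacle there is the coefficient of $\varpi_n$ in $\omega'$, which would be problematic if $k_n$ could equal $0$; the parity/congruence argument above is precisely what rules this out, using that $\varpi_n$ represents the non-trivial class in $P/Q$ for type $B_n$.
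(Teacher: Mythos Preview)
Your proof is correct and follows essentially the same approach as the paper, which simply asserts that ``the proof is the same as that of Lemma~\ref{lem:C:2}.'' You have made explicit the one point the paper leaves implicit---that $\omega' = \lambda' - \varpi_n$ is genuinely an $\mathsf{SO}(2n+1)$-dominant weight---handling it via a parity argument in $P/Q$; an equivalent and slightly more direct route is to observe that $\omega$ and each $\alpha_i$ already lie in the $\Z$-span of $\varpi_1,\dots,\varpi_{n-1},2\varpi_n$, hence so does $\omega'$, after which dominance of $\varpi_n+\omega'$ forces all coefficients non-negative.
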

\medskip
The proof is the same as that of Lemma \ref{lem:C:2}.
\bigskip

\begin{prop} \label{prop:C:6}
The set of highest weights of the irreducible factors of $S^{\otimes
(2m+1)}$ is the set $\{ \varpi_n +\omega: \omega \in \bigcup_{j=0}^mW_j
\}$.
\end{prop}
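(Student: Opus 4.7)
The plan is to mirror the proof of Proposition~\ref{prop:C:4} almost verbatim, using the decomposition $S^{\otimes(2m+1)} = S^{\otimes 2m} \otimes S$ in place of $S^{\otimes 2m} = S^{\otimes(2m-2)} \otimes (S\otimes S)$. Tensoring with a single copy of $S$ plays the role of shifting highest weights by $\varpi_n$, and Lemma~\ref{lem:C:5} plays the role of Lemma~\ref{lem:C:2}. Note that although $S^{\otimes 2m}$ is a genuine $\mathsf{SO}(2n+1)$-module, tensoring it with $S$ produces a $\mathsf{Spin}(2n+1)$-module whose dominant weights do lie in $\varpi_n + X(\mathsf{SO}(2n+1))^+$, so the bookkeeping is consistent.

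For the forward containment, use complete reducibility to write $S^{\otimes 2m} = \bigoplus_i M_i$ into $\mathsf{SO}(2n+1)$-irreducibles, so that $S^{\otimes(2m+1)} = \bigoplus_i (M_i \otimes S)$. Any irreducible submodule of $S^{\otimes(2m+1)}$ sits inside some $M_i \otimes S$. By Proposition~\ref{prop:C:4}, the highest weight of $M_i$ is some $\omega \in \bigcup_{j=0}^m W_j$, so the highest weight of $M_i \otimes S$ is $\omega + \varpi_n$. Every other dominant weight of $M_i \otimes S$ has the form $\omega + \varpi_n - \sum_k d_k \alpha_k$ with $d_k \in \N$, so by the saturation assertion of Lemma~\ref{lem:C:5} it still lies in $\{\varpi_n + \omega' : \omega' \in \bigcup_{j=0}^m W_j\}$. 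Hence so does the highest weight of any irreducible summand of $M_i \otimes S$.

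For the reverse containment, fix any $\omega \in \bigcup_{j=0}^m W_j$. By Proposition~\ref{prop:C:4}, there is an irreducible submodule $M$ of $S^{\otimes 2m}$ with highest weight $\omega$. Then $M \otimes S$ is a submodule of $S^{\otimes(2m+1)}$, and the tensor product of a highest weight vector of $M$ with a highest weight vector of $S$ is a highest weight vector of $M \otimes S$ of weight $\omega + \varpi_n$ (since the sum of two dominant weights is again dominant and maximal among $\mu_M + \mu_S$). Therefore $\omega + \varpi_n$ appears as the highest weight of some irreducible summand of $M \otimes S$, and hence of $S^{\otimes(2m+1)}$.

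The argument is almost mechanical once one has Proposition~\ref{prop:C:4} and Lemma~\ref{lem:C:5} in hand; the only potential obstacle is purely conceptual, namely keeping straight that tensoring an $\mathsf{SO}(2n+1)$-module with $S$ produces a $\mathsf{Spin}(2n+1)$-module whose weights are the expected shifts by $\varpi_n$, and that Lemma~\ref{lem:C:5} applies to exactly the set one needs. No nontrivial base case is required beyond $m = 0$, where $S^{\otimes 1} = S$ has the single highest weight $\varpi_n = \varpi_n + 0$, matching $W_0 = \{0\}$.
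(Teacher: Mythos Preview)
Your proof is correct and follows essentially the same approach as the paper: decompose $S^{\otimes(2m+1)}$ as $S$ tensored with $S^{\otimes 2m}$, use Proposition~\ref{prop:C:4} to control the highest weights of the even-power factors, and appeal to saturation for the forward containment. The only cosmetic difference is that you invoke Lemma~\ref{lem:C:5} directly for the forward containment, whereas the paper phrases it via Lemma~\ref{lem:C:2} applied to $\omega - \sum d_i\alpha_i$; your choice is arguably cleaner since Lemma~\ref{lem:C:5} is precisely the saturation statement for the target set.
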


\begin{proof} The module $S^{\otimes (2m+1)} = S\otimes S^{\otimes 2m}$
is the sum of submodules of the form $S\otimes M$ where $M$ is an
irreducible module with highest weight in $\bigcup_{j=0}^mW_j$. The
highest weight of such a tensor product is the highest weight of an
irreducible submodule of $S\otimes M$. Such highest weights range over
the set described in Lemma \ref{lem:C:5}, by the proof of Proposition
\ref{prop:C:4}. To show that that set exhausts the highest weights of
irreducible submodules of $S^{\otimes (2m+1)}$, we have that the
highest weight of $S\otimes M$ has the form $\varpi_n+\omega $ with
$\omega\in\bigcup_{j=0}^mW_j$; hence, the dominant weights in $S\otimes
M$ have the form
\[ \textstyle
\varpi_n+\omega -\sum_{j=1}^nd_i\alpha_i = \textstyle \varpi_n+\omega
-\sum_{j=1}^nd_i\alpha_i ,
\]
with $d_i\ge 0$, where the dominant weight $\omega
-\sum_{j=1}^nd_i\alpha_i$ is an element of $\bigcup_{j=0}^mW_j$, by
Lemma \ref{lem:C:2}.
\end{proof}

\begin{thm}
  In type $B_n$, any tensor power $S^{\otimes r}$ of the spin module
  $S$ in type $B_n$ is a saturated module (in the sense defined in the
  introduction).
\end{thm}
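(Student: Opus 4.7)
The plan is to combine Propositions \ref{prop:C:4} and \ref{prop:C:6} with the explicit description of the weight set $W\pi = \Pi(S^{\otimes r})$ given in \ref{ex:B_n:Spin}. Recall that saturation of $M$ means $\Pi^+(M)$ coincides with the set of highest weights of the composition factors of $M$. Since highest weights of composition factors always lie in $\Pi^+(M)$, only the reverse inclusion requires verification: every $\lambda \in \Pi^+(S^{\otimes r})$ must arise as the highest weight of some composition factor. I would split on the parity of $r$ and argue directly that the set of dominant weights of $S^{\otimes r}$ matches the set of highest weights computed in Propositions \ref{prop:C:4} and \ref{prop:C:6}.

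For the even case $r = 2m$, the description in \ref{ex:B_n:Spin} says the weights of $S^{\otimes 2m}$ are exactly those $\lambda \in X$ with $\bil{H_i}{\lambda} \in \{0, \pm 1, \dots, \pm m\}$ for every $i$. Dominance relative to the simple coroots $h_i = H_i - H_{i+1}$ ($i<n$) and $h_n = 2H_n$ translates to the chain $\bil{H_1}{\lambda} \geq \cdots \geq \bil{H_n}{\lambda} \geq 0$. Writing such a dominant $\lambda$ in the $\mathsf{SO}(2n+1)$-dominant form $\lambda = m_1\varpi_1 + \cdots + m_{n-1}\varpi_{n-1} + m_n(2\varpi_n)$ with $m_j \geq 0$, the identities $\bil{H_j}{\varpi_i} = 1$ for $j \leq i < n$ and $\bil{H_j}{2\varpi_n} = 1$ give $|\lambda| = \sum_j m_j = \bil{H_1}{\lambda}$. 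Hence the weight constraint collapses to $|\lambda| \leq m$, so $\Pi^+(S^{\otimes 2m}) = \bigcup_{j=0}^m W_j$, which by Proposition \ref{prop:C:4} is exactly the set of highest weights of composition factors of $S^{\otimes 2m}$.

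For the odd case $r = 2m+1$, the same reasoning shows $\Pi^+(S^{\otimes 2m+1})$ consists of dominant $\lambda$ with $\bil{H_i}{\lambda} \in \{\tfrac{1}{2}, \tfrac{3}{2}, \dots, m+\tfrac{1}{2}\}$ in decreasing order. Setting $\mu = \lambda - \varpi_n$ exhibits $\mu$ as an $\mathsf{SO}(2n+1)$-dominant weight with $\bil{H_i}{\mu} \in \{0, 1, \dots, m\}$, so by the even-case calculation $\mu \in \bigcup_{j=0}^m W_j$. Thus $\Pi^+(S^{\otimes 2m+1}) = \{\varpi_n + \omega : \omega \in \bigcup_{j=0}^m W_j\}$, which matches Proposition \ref{prop:C:6}. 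The only potentially delicate point is the bookkeeping identity $|\lambda| = \bil{H_1}{\lambda}$ for $\mathsf{SO}(2n+1)$-dominant $\lambda$, which translates between the coordinate description of weights in $W\pi$ and the fundamental weight description used to define the $W_j$; with that in hand, the two propositions supply the highest-weight sets for free and the theorem follows.
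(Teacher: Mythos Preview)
Your argument is correct, but it takes a different route from the paper's. The paper invokes Lemmas \ref{lem:C:2} and \ref{lem:C:5} (saturation of the highest-weight sets) together with Propositions \ref{prop:C:4} and \ref{prop:C:6}, and then uses the general observation that if the set $H$ of highest weights of composition factors of $M$ is itself saturated, then $H=\Pi^+(M)$: indeed any $\lambda\in\Pi^+(M)$ is a weight of some $L(\mu)$ with $\mu\in H$, hence $\lambda\le\mu$, and saturation of $H$ forces $\lambda\in H$. You instead bypass Lemmas \ref{lem:C:2} and \ref{lem:C:5} entirely by computing $\Pi^+(S^{\otimes r})$ directly from the coordinate description of the full weight set given in \ref{ex:B_n:Spin}, using the identity $|\lambda|=\bil{H_1}{\lambda}$ for $\mathsf{SO}(2n+1)$-dominant $\lambda$ to translate that description into the sets appearing in Propositions \ref{prop:C:4} and \ref{prop:C:6}. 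Your approach is more concrete and makes the match explicit, at the cost of importing the weight computation from \ref{ex:B_n:Spin}; the paper's approach stays within the appendix but leaves the final ``saturated highest-weight set forces equality with $\Pi^+(M)$'' step implicit. Both are short and valid. One small remark: your claim ``$\bil{H_j}{\varpi_i}=1$ for $j\le i<n$'' is true but more than you need; only the case $j=1$ (giving $\bil{H_1}{\lambda}=|\lambda|$) is actually used.
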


\begin{proof}
  This follows from the definition of saturated modules, given in the
  introduction, along with results \ref{lem:C:2}, \ref{prop:C:4},
  \ref{lem:C:5}, and \ref{prop:C:6}.
\end{proof}

%\newpage

\end{document}